\title{Derivation of a Biot-Plate-System for a thin poroelastic layer}
\author{Markus Gahn}
\date{}
\newcommand{\R}{\mathbb{R}}
\newcommand{\N}{\mathbb{N}}
\newcommand{\Z}{\mathbb{Z}}
\newcommand{\veps}{v_{\varepsilon}}
\newcommand{\ueps}{u_{\varepsilon}}
\newcommand{\peps}{p_{\varepsilon}}
\newcommand{\weps}{w_{\varepsilon}}
\newcommand{\tueps}{\widetilde{u}_{\varepsilon}}
\newcommand{\Veps}{V_{\varepsilon}}
\newcommand{\Ueps}{U_{\varepsilon}}
\newcommand{\Weps}{W_{\varepsilon}}
\newcommand{\tUeps}{\widetilde{U}_{\varepsilon}}
\newcommand{\phieps}{\phi_{\varepsilon}}
\newcommand{\vareps}{\varepsilon}
\newcommand{\oeps}{\Omega_{\varepsilon}}
\newcommand{\oef}{\Omega_{\varepsilon}^f}
\newcommand{\oes}{\Omega_{\varepsilon}^s}
\newcommand{\geps}{\Gamma_{\varepsilon}}
\newcommand{\sepm}{S_{\varepsilon}^{\pm}}
\newcommand{\tphieps}{\widetilde{\phi}_{\varepsilon}}
\newcommand{\foe}{\frac{1}{\varepsilon}}
\newcommand{\bxfxe}{\left(\bar{x},\frac{x}{\varepsilon}\right)}
\newcommand{\btxfxe}{\left(t,\bar{x},\frac{x}{\varepsilon}\right)}
\newcommand{\x}{\bar{x}}
\newcommand{\rats}{\overset{2}{\rightharpoonup}}
\renewcommand{\u}{\overline{u}}
\renewcommand{\v}{\overline{v}}
\newcommand{\w}{\overline{w}}
\newcommand{\z}{\overline{z}}
\newcommand{\spaceV}{\mathcal{V}}
\newcommand{\spaceW}{\mathcal{W}}
\renewcommand{\div}{\mathrm{div}}
\newcommand{\tbxfxe}{\left(t,\bar{x},\frac{x}{\varepsilon}\right)}
\newcommand{\app}{\mathrm{app}}
\newcommand{\fxe}{\frac{x}{\varepsilon}}
\newcommand{\ie}{i.\,e., }
\newtheorem{definition}{Definition}
\newtheorem{remark}{Remark}
\newtheorem{theorem}{Theorem}
\newtheorem{proposition}{Proposition}
\newtheorem{lemma}{Lemma}
\newtheorem{corollary}{Corollary}
\begin{document}

\maketitle
\vspace{-2em}
\begin{center}
\begin{minipage}{25em}
\centering\textit{Institute for Mathematics, University Heidelberg, \\
Im Neuenheimer Feld 205, 69120 Heidelberg, Germany.}
\end{minipage}
\end{center}

\vspace{2em}
\begin{abstract}
We study incompressible fluid flow through a thin poroelastic layer and rigorously derive a macroscopic model when the thickness of the layer tends to zero. Within the layer we assume a periodic structure and both, the periodicity and the thickness of the layer, are of order $\varepsilon$ which is small compared to the length of the layer. The fluid flow  is described by  quasistatic Stokes-equations and for the elastic solid we consider linear elasticity equations, and both are coupled via continuity of the velocities and the normal stresses. The aim is to pass to the limit $\varepsilon \to 0$ in the weak microscopic formulation by using multi-scale techniques adapted to the simultaneous homogenization and dimension reduction in continuum mechanics. The macroscopic limit model is given by a coupled Biot-Plate-system consisting of a generalized Darcy-law coupled to a Kirchhoff-Love-type plate equation including the Darcy pressure.
\end{abstract}

\section{Introduction}

In this paper we rigorously derive a Biot-Plate system for a linearized quasistatic fluid-structure interaction problem in a thin periodic poroelastic layer. The thin layer consists of a fluid and an elastic solid part, and the periodicity of the structure within the layer and the thickness of the layer are both of order $\varepsilon$, where the paramter $\varepsilon$ is small compared to the length of the layer. We assume small deformations, such that in the solid part the equations of linear elasticity hold. The fluid is supposed to be incompressible and its evolution is given by Stokes-equations. Between the fluid and the solid we assume a linearized (fluid equation also formulated in Lagrangian-coordinates) coupling condition, where we assume continuity of the velocities and normal stresses. 
We consider the case when the ratio between the viscosity and the characteristic size of the elasticity coefficients is of order $\vareps^2$ corresponding to the diphasic cases (we refer to \cite{ClopeauFerrinGilbertc2001,GilbertMikelic2000} for more details on this contrast of property number). In the limit $\vareps \to 0$ this scaling leads to a Biot-type system.

To pass to the limit $\vareps \to 0$ we have to deal simultaneously with the homogenization of the porous structure in the layer and the reduction of the thin layer to a lower dimensional manifold $\Sigma$. Hence, multi-scale techniques combining homogenization and dimension reduction are necessary. Here we use the method of two-scale convergence in thin periodic structures \cite{NeussJaeger_EffectiveTransmission}. In a first step we establish existence and uniqueness of a microscopic solution in the thin porous layer and prove some additional regularity results with respect to time which are later necessary for the uniform a priori estimates. For the existence proof we use the Galerkin-method. Due to the quasistatic character of the problem the underlying system of ordinary differential equations for the Galerkin coefficients has no full rank and we have to argue in a slightly different way compared to standard literature on parabolic problems. The basic ingredients for the derivation of the macroscopic problem are uniform a priori estimates with respect to the scaling parameter $\vareps$. To estimate the fluid velocity $\veps$ and the displacement $\ueps$, we have to use a Korn inequality for thin (perforated) domains with constants depending explicitly on $\vareps$, see \cite{GahnJaegerTwoScaleTools}. However, the estimates obtained in this way are not good enough to pass to the limit $\vareps \to 0$. Hence, to obtain better estimates with respect to $\vareps$, it is suitable to consider the quantity $\veps  - \partial_t \ueps$, which has zero boundary values on the fluid-structure interface. However, for this we have to extend the displacement into the fluid domain. Here we use an extension operator from \cite{GahnJaegerTwoScaleTools} which allows to control the symmetric gradient of the extension. Due to the continuity of the velocities at the fluid-structure interface we also need a priori estimates for the time derivative $\partial_t \ueps$ and its (symmetric) gradient. For this we have to take into account the higher regularity with respect to time obtained in the existence result for the microscopic solution.
 To estimate the fluid pressure $\peps$ we prove the existence of a Bogovskii-operator in thin perforated domains and give an explicit bound for its operator norm.
 
Based on the a priori estimates  we are able to derive compactness results for the microscopic solution. As an underlying topology we use the two-scale convergence in thin heterogeneous domains. Because of the thin structure we obtain a specific scaling with respect to $\vareps$ in the a priori estimates for the displacement. As a result, the two-scale limit of the displacement is a Kirchhoff-Love displacement, see for example \cite{gahn2022derivation} and \cite{griso2020homogenization}. In our case, the crucial point is to find the relation between the Kirchhoff-Love displacement and the limit of the fluid velocity. We emphasize that for our compactness results we do not explicitly use that $(\veps,\peps,\ueps)$ is the solution of the microscopic problem, but only the a priori estimates obtained before. Hence, our results can be transferred to more general problems including for example transport of species and also other geometries like bulk domains separated by thin membranes. Finally, using the compactness results for the micro-solutions we derive the macroscopic model. It is well-known, see for example \cite{ClopeauFerrinGilbertc2001}, that for porous domains the macro-model is given by a Biot-system consisting of a generalized Darcy-law for the limit pressure and an homogenized elasticity equation coupled to the limit pressure. In our case, for the thin poroelastic layer, the generalized Darcy-law has to be adapted to the Kirchhoff-Love structure of the limit displacement. For its derivation we first obtain a two-pressure Stokes model on $\Sigma$, which is obtained by a  two-pressure decomposition adapted to the lateral boundary conditions considered in our microscopic model. The limit problem for the displacement is given by a (fourth-order) plate equation, also coupled to the limit pressure. 

There is a large number of results dealing with the heuristic or rigorous derivation of Biot-laws from first principles in poroelastic domains (not thin), and we mention some important contributions. First formal results where obtained in \cite{levy1979propagation,mei1989mechanics}. A first rigorous result can be found in \cite{nguetseng1990asymptotic} for slightly compressible fluids using the method of two-scale convergence. The first rigorous result for an incompressible fluid was obtained in \cite{ClopeauFerrinGilbertc2001}. In \cite{JaegerMikelicNeussRaduHomogenization} an additional reactive flow was taken into account. For the dimension reduction of thin homogeneous structures in linear elasticity we refer to the monograph \cite{ciarlet1997mathematical}. First results for heterogeneous structures were obtained in \cite{caillerie1984thin}, where the heterogeneity occurs in oscillating coefficients and not in the geometry. The problem of perforated thin layers was first treated in \cite{griso2020homogenization} using the unfolding method. A general framework for multi-scale techniques for problems in thin perforated domains was developed in \cite{GahnJaegerTwoScaleTools}. In \cite{marciniak2015rigorous} the rigorous dimension reduction for a Biot-model in a thin layer is performed. More precisely it is assumed that the layer has a porous structure with porosity small compared to the thickness of the layer. In this case, it is suitable to assume that the Biot-law holds in the thin layer. Hence, the limit is not obtained by a simultaneous homogenization and dimension reduction. Results combining these problems for (linear) fluid-structure interaction seem to be rare. The only rigorous result in this direction can be found in \cite{gahn2022derivation}, where also a coupling of the thin layer to bulk domains is considered. However, the underlying scaling of the equations in the thin layer is different from our scaling and does not lead to a Biot-law in the limit. In summary, compared to the existing literature, the main new contribution of this paper is to take into account the simultaneous homogenization and dimension reduction for incompressible fluid flow through poroelastic (thin) layers in the diphasic case leading to a Biot-type macroscopic equation including a plate equation for the macroscopic displacement.

The paper is organized as follows: In Section \ref{sec:micro_model} we introduce the microscopic model with its weak formulation. In Section \ref{sec:main_results} we formulate the main results of the paper and give the macroscopic model. Existence and uniqueness for the micro-model is shown in Section \ref{sec:Existence}. In Section \ref{sec:Apriori_Estimates} we prove the a priori estimates for the microscopic solution and in Section \ref{sec:compactness_results} we give the compactness results. Finally, in Section \ref{sec:derivation_macro_model} we derive the macroscopic model. In the appendix we formulate some technical results. In Appendix \ref{sec:two_pressure_decomp} we deal with function spaces suitable for the derivation of the two-pressure Stokes-model which builds the basis for the (generalized) Darcy-law. In Appendix \ref{sec:Bogovskii} we construct a Bogovskii-operator for thin perforated layers. Some important inequalities known from existing literature are summarized in Appendix \ref{sec:inequalities} and in Appendix \ref{sec:two_scale_convergence} we give a brief introduction to the two-scale convergence in thin perforated domains.

\subsection{Notations}

In the following, let $\Omega \subset \R^n$ be open with Lipschitz-boundary. We usually denote the outer unit normal on $\partial \Omega$ by $\nu$. Further, for $d\in \N$ we denote by $L^p(\Omega)^d$ and $W^{1,p}(\Omega)^d$ the standard Lebesgue and Sobolev spaces with $p \in [1,\infty]$. For $p=2$ we write $H^1(\Omega)^d:= W^{1,2}(\Omega)^d$. For the norms we neglect the upper index $d$, for example we write $\|\cdot\|_{L^p(\Omega)}$ instead of $\|\cdot \|_{L^p(\Omega)^d}$. For $\Gamma \subset 
 \partial \Omega$, we define 
\begin{align*}
H^1(\Omega,\Gamma):= \left\{ u \in H^1(\Omega) \, : \, u = 0 \,\mbox{ on } \Gamma \right\}.
\end{align*}
For a Banach space $X$ we denote its dual by $X'$, and for $p \in [1,\infty]$ we denote the usual Bochner spaces by $L^p(\Omega,X)$. For $\Omega = (0,T)$ with $T>0$ and $k\in \N$ we denote by $W^{k,p}((0,T),X)$ the Bochner spaces of functions $u \in L^p((0,T),X)$ with generalized time-derivatives $\partial_t^l u \in L^p((0,T),X)$ for $l=0,\ldots,k$. Again, for $p=2$ we write $H^k((0,T),X):= W^{k,2}((0,T),X)$.

Let $Z:= Y \times (-1,1) := (0,1)^{n-1} \times (-1,1)$. With the index $\#$ we usually indicate function spaces of $Y$-periodic functions. More precisely, $C_{\#}^{\infty}(Z)$ is the space of smooth $Y$-periodic functions, \ie $f \in C_{\#}^{\infty}(Z)$ fulfills $f \in C^{\infty}(\R^{n-1} \times (-1,1))$ and $f(y + e_i) = f(y)$ for all $y \in \R^{n-1}\times (-1,1)$ and $i=1,\ldots,n-1$. $H^k_{\#}(Z)$ for $k\in \N_0$ is the closure of $C_{\#}^{\infty}(Z)$ in $H^k(Z)$. For a subset $Z_{\ast} \subset Z$ we denote by $H_{\#}^k(Z_{\ast})$ the restriction of functions from $H_{\#}^k(Z)$ to $Z_{\ast}$. Further, we define for $\Gamma:= \partial Z_{\ast} \setminus \partial Z$
\begin{align}\label{def:Hper_Gamma}
H_{\#}^1(Z_{\ast},\Gamma):= \{f \in H_{\#}^1(Z_{\ast})\, : \, f = 0 \mbox{ on }\Gamma\}.
\end{align}
For a function $f:\Omega \rightarrow \R^m$ with $m\in \N$ we denote its (weak) Jacobi matrix by $\partial f$ with $(\partial f)_{ij} = \partial_j f_i$ and the (weak) gradient $\nabla f:= (\partial f)^t$. For a vector field $u:\Omega \rightarrow \R^n$ we denote the symmetric gradient by 
\begin{align*}
D(u):= \frac12 \left(\nabla u + \nabla u^t\right).
\end{align*}
For $x \in \R^n$ we write $\x := (x_1,\ldots,x_{n-1})$ so we have $x= (\x,x_n)$. Let $\Sigma \subset \R^{n-1}$ open ($n\geq 2$). Then we denote the gradient of a function $f:\Sigma \rightarrow \R$ by $\nabla_{\x} f$. We also consider $\nabla_{\x} f(\x)$ as a vector in $\R^n$ by setting the last component equal to $0$. For a vector field $u:\Sigma \rightarrow \R^{n-1}$ we denote its symmetric gradient by $D_{\x}(u)$ and also consider if necessary $D_{\x}(u)(\x)$ as an element in $\R^{n\times n}$ by setting $D_{\x}(u)(\x)_{ij}= 0$ for $i=n$ or $j=n$. For  $v:\Sigma \rightarrow \R^n$ we use the same notation for the symmetric gradient:
\begin{align*}
D_{\x}(v)_{ij} := \frac12(\partial_i v_j + \partial_j v_i).
\end{align*}
Hence $\partial_n v_j = 0$ for $j=1,\ldots,n$.
For matrices $A,B \in \R^{n\times n}$ we define the Frobenius inner product by  $A:B = \mathrm{tr}(AB^t)$. The unit matrix is denoted by $I$.
Finally, throughout the paper $C>0$ denotes a generic constant which is independent of $\vareps$.

\section{The microscopic model}
\label{sec:micro_model}

In this section we formulate the micro-model, starting with a detailed description of the underlying geometry of the model:

\subsection{Microscopic geometry}

Let $n\in \N$ with $n\geq 2$ and $\Sigma = (a,b) \in \R^{n-1}$ with $a,b \in \mathbb{Q}^{n-1}$ and $a_i < b_i$ for $i=1,\ldots,n-1$. Further, we assume that $\vareps>0$ with $\vareps^{-1}\in \N$.
We define the whole layer by 
\begin{align*}
\oeps:= \Sigma \times (-\vareps , \vareps).
\end{align*}
Within the  thin layer we have a fluid part $\oef$ and a solid part $\oes$, which have a periodical microscopic structure. More precisely, we define  the reference cell
\begin{align*}
Z := Y\times (-1,1) := (0,1)^{n-1} \times (-1,1),
\end{align*}
with top and bottom
\begin{align*}
S^{\pm} := Y \times \{\pm 1\}, \qquad S:= S^+ \cup S^-.
\end{align*}
The cell $Z$ consists of a solid part $Z_s\subset Z $ and a fluid part $Z_f \subset Z$ with common interface $\Gamma = \mathrm{int}\left( \overline{Z_s} \cap \overline{Z_f} \right)$ (here \textit{int} denotes the interior of a set). Hence, we have 
\begin{align*}
Z = Z_f \cup Z_s \cup \Gamma.
\end{align*}
If it is not clear from the context, we denote by $\nu_f$ respectively $\nu_s$ the outer unit normal with respect to $Z_f $ respectively $Z_s$.
We   assume that $S^{\pm} \cap \partial Z_f = \emptyset$, \ie the fluid does not touch the upper and lower boundary of the thin layer. Furthermore, we request that $Z_f$ and $Z_s$ are open, connected   with Lipschitz-boundary, and the lateral boundary is $Y$-periodic which means that for $i=1,\ldots,n-1$ and $\ast \in \{s,f\}$
\begin{align*}
\left(\partial Z_{\ast} \cap \{y_i = 0\}\right) + e_i = \partial Z_{\ast} \cap \{y_i=1\}.
\end{align*}
We introduce the set $K_{\vareps}:= \{k \in \Z^{n-1} \times \{0\} \, : \, \vareps(Z + k) \subset \oeps\}$ and we have $\oeps = \mathrm{int}\left(\bigcup_{k\in K_{\vareps}} \vareps(\overline{Z} + k)\right)$.
We define the fluid and solid part of the layer by
\begin{align*}
\oef := \mathrm{int} \left( \bigcup_{k \in K_{\vareps}} \vareps \left(\overline{Z_f} + k \right) \right), \qquad
\oes := \mathrm{int} \left( \bigcup_{k \in K_{\vareps}} \vareps \left(\overline{Z_s} + k \right) \right).
\end{align*}
The fluid-structure interface between the solid and the fluid part is denoted by 
\begin{align*}
\geps := \mathrm{int}\left( \overline{\oes} \cap \overline{\oef}\right).
\end{align*}
The upper and lower boundary of the thin layer is denoted by
\begin{align*}
\sepm := \Sigma \times \{\pm \vareps\}, \qquad S_{\vareps}:= S_{\vareps}^+ \cup S_{\vareps}^-.
\end{align*}
Finally we define a Dirichlet- and Neumann boundary part of the lateral boundary for the fluid. For $i=1,\ldots,n-1$ we define
\begin{align*}
\partial_{a_i} \Sigma:= \{\x \in \partial \Sigma\, : \, \x_i = a_i\}, \quad \partial_{b_i} \Sigma:= \{\x \in \partial \Sigma\, : \, \x_i = b_i\}.
\end{align*}
Let $I_a,I_b\subset \{1,\ldots,n-1\}$ (possibly empty) and define 
\begin{align*}
\partial_D \Sigma := \mathrm{int}\left( \bigcup_{i \in I_a} \partial_{a_i} \Sigma \cup \bigcup_{i\in I_b} \partial_{b_i}\Sigma\right), \qquad \partial_N \Sigma := \mathrm{int} \left(\partial \Sigma \setminus \partial_D \Sigma\right).
\end{align*}
We assume $\partial_N \Sigma \neq \emptyset$ and $\partial_D \Sigma\neq \emptyset$. In particular, we have $|\partial_D \Sigma|>0$ and $|\partial_N \Sigma|>0$. Now, we define the associated microscopic lateral boundaries for the fluid by
\begin{align*}
\partial_D \oef := \mathrm{int}\left( \partial \oef \cap \left( \partial_D \Sigma \times (-\vareps , \vareps) \right) \right),\qquad \partial_N \oef := \mathrm{int}\left( \partial \oef \cap \left( \partial_N \Sigma \times (-\vareps , \vareps) \right) \right).
\end{align*}
For the displacement in the solid we only consider Dirichlet-boundary conditions, and therefore define the whole lateral boundary as Dirichlet-boundary:
\begin{align*}
\partial_D \oes := \mathrm{int}\left( \partial \oes \cap \left(\partial \Sigma \times (-\vareps,\vareps)\right)\right).
\end{align*}

\subsection{Microscopic equations and weak formulation}
We consider the following quasi-static model:  We are looking for a fluid velocity $\veps: (0,T)\times \oef \rightarrow \R^n$, a fluid pressure $\peps: (0,T)\times \oef \rightarrow \R$, and a displacement $\ueps: (0,T)\times \oes \rightarrow \R^n$, such that
\begin{subequations}\label{MicroModel}
\begin{align}
\label{MicroModel_PDE_fluid}
- \vareps \nabla \cdot (D(\veps)) +  \nabla \peps &= f_{\vareps} &\mbox{ in }& (0,T)\times \oef,
\\
\nabla \cdot \veps &= 0 &\mbox{ in }& (0,T)\times \oef,
\\
\label{MicroModel_PDE_solid}
- \frac{1}{\vareps}\nabla \cdot (AD(\ueps)) &= g_{\vareps} &\mbox{ in }& (0,T)\times \oes.
\end{align}
On the interface $\geps$ between the fluid and the solid we assume continuity of the velocities and stresses:
\begin{align}
\partial_t \ueps &= \veps &\mbox{ on }& (0,T)\times \geps,
\\
(-\vareps D(\veps) + \peps I)\nu &= -\frac{1}{\vareps} A D(\ueps)\nu &\mbox{ on }& (0,T)\times \geps,
\end{align}
On the lateral boundary of $\oeps$ we have
\begin{align}
\ueps &= 0 &\mbox{ on }& (0,T)\times \partial_D \oes,
\\
-\frac{1}{\vareps} A D(\ueps) \nu &=0  &\mbox{ on }& (0,T)\times \sepm,
\\
\veps &= 0 &\mbox{ on }& (0,T)\times \partial_D \oef,
\\
\label{MicroModel_PressureBC}
(-\vareps D(\veps) + \peps I)\nu &= 0 &\mbox{ on }& (0,T)\times \partial_N \oef.
\end{align}
Further, an initial condition for the displacement $\ueps$ is needed, where we assume
\begin{align}
\ueps(0)  = 0 \quad \mbox{ in } \oes.
\end{align}
\end{subequations}
We emphasize that the only non-trivial data are the external forces $f_{\vareps}$ and $g_{\vareps}$ in $\eqref{MicroModel_PDE_fluid}$ resp. $\eqref{MicroModel_PDE_solid}$. However, the methods developed in this paper allow to treat more general cases. In particular, we  mention inhomogeneous pressure boundary conditions on the lateral boundary $\partial_N \oef$ in $\eqref{MicroModel_PressureBC}$. Such boundary conditions were treated in \cite{fabricius2023homogenization} for the pure Stokes problem with a slip boundary condition on the perforations.

\begin{definition}[Weak solution]
We call the triple $(\veps,\peps,\ueps)$ a weak solution of the microscopic problem $\eqref{MicroModel}$, if 
\begin{align*}
\veps &\in L^2((0,T),H^1(\oef,\partial_D \oef))^n, \quad \peps \in L^2((0,T)\times \oef),
\\
\ueps &\in H^1((0,T),H^1(\oes,\partial_D \oes))^n,
\end{align*}
with $\nabla \cdot \veps = 0$ and $\partial_t \ueps = \veps$ on $\geps$ and $\ueps(0) = 0$. Further, for all $\phieps \in H^1(\oeps,\partial_D \oeps)^n$ it holds almost everywhere in $(0,T)$
\begin{align}
\begin{aligned}\label{eq:micro_model}
\vareps \int_{\oef} D(\veps): D(\phieps) dx - \int_{\oef} & \peps \nabla \cdot \phieps dx + \frac{1}{\vareps} \int_{\oes} AD(\ueps):D(\phieps) dx 
\\
&= \int_{\oef} f_{\vareps} \cdot \phieps dx + \int_{\oes}g_{\vareps} \cdot \phieps dx .
\end{aligned}
\end{align}
\end{definition}
In the formulation of a weak solution we request $H^1$-regularity with respect to time to guarantee that the initial condition of $\ueps$ and the trace of $\partial_t \ueps$ is well-defined.
%
%
For the homogenization, more precisely to obtain good $\vareps$-uniform a priori estimates, we will need even more regularity with respect to time.
\\

The aim of this paper is the derivation of a macroscopic model including effective coefficients for $\vareps \to 0$, when the thin layer reduces to the lower dimensional submanifold $\Sigma$. The principal idea is to assume that the microscopic solution fulfills a two-scale ansatz. We illustrate this ansatz for the fluid velocity in the layer:
\begin{align}\label{TwoScaleAnsatz}
\veps(t,x) = v_0\tbxfxe + \vareps v_1 \tbxfxe +  \ldots,
\end{align}
with functions $v_j$ which are $Y$-periodic with respect to the variable $y=\frac{x}{\vareps}$ (of course, not periodic in the last component $y_n = \frac{x_n}{\vareps}$). The two-scale convergence gives a rigorous justification of the expansion  in $\eqref{TwoScaleAnsatz}$. We will identify the expansion for the displacement up to order 2, whereas for the fluid velocity and pressure we get the terms up to order 1 (two-scale convergence for the pressure only for order zero).

\subsection{Assumptions on the data}

For the definition of the two-scale convergence we refer to Appendix \ref{sec:two_scale_convergence}.
\begin{enumerate}
[label = (A\arabic*)]
\item The elasticity tensor $A\in \R^{n\times n \times n \times n}$  is symmetric and positive:
\begin{align*}
A_{ijkl} = A_{jilk} = A_{ljik}\quad \mbox{ for all } i,j,k,l=1,\ldots,n,
\\
A B : B \geq c_0 |B|^2 \quad \mbox{ with } c_0>0, \, \forall B \in \R^{n\times n} \mbox{ symmetric}.
\end{align*}

\item\label{ass:f_eps} $f_{\vareps} \in H^4((0,T),L^2(\oef))^n\cap W^{3,\infty}((0,T),L^2(\oef))^n$ with $f_{\vareps}(0) = 0$ and 
\begin{align*}
\|f_{\vareps}\|_{H^3((0,T),L^2(\oef))} + \|f_{\vareps}\|_{W^{2,\infty}((0,T),L^2(\oef))} &\le C \sqrt{\vareps},
\\
\|f_{\vareps}^n\|_{H^3((0,T),L^2(\oef))} + \|f_{\vareps}^n\|_{W^{2,\infty}((0,T),L^2(\oef))} &\le C \vareps^{\frac32}.
\end{align*}
Further, there exists $f_0 \in L^2((0,T)\times \Sigma)^n$ with $f_0^n = 0$ and $f_1^n \in L^2((0,T)\times \Sigma \times Z_f)$, such that $\chi_{\oef}f_{\vareps} \rats \chi_{Z_f}f_0$ and $\chi_{\oef} \vareps^{-1} f_{\vareps}^n \rats \chi_{Z_f} f_1^n$.
\item\label{ass:g_eps}  $g_{\vareps} \in H^4((0,T),L^2(\oes))^n\cap W^{3,\infty}((0,T),L^2(\oes))^n$ with $g_{\vareps}(0) = 0$ and 
\begin{align*}
\|g_{\vareps}\|_{H^3((0,T),L^2(\oes))} + \|g_{\vareps}\|_{W^{2,\infty}((0,T),L^2(\oes))} &\le C \sqrt{\vareps},
\\
\|g_{\vareps}^n\|_{H^3((0,T),L^2(\oes))} + \|g_{\vareps}^n\|_{W^{2,\infty}((0,T),L^2(\oes))} &\le C \vareps^{\frac32}.
\end{align*}
Further, there exists $g_0 \in L^2((0,T)\times \Sigma)^n$ with $g_0^n = 0$ and $g_1^n \in L^2((0,T)\times \Sigma \times Z_s)$, such that $\chi_{\oes}g_{\vareps} \rats \chi_{Z_s}g_0$ and $\chi_{\oes}  \vareps^{-1}g_{\vareps}^n \rats \chi_{Z_s} g_1^n$.
\end{enumerate}
 We emphasize that the $n$-th component of $f_{\vareps}$ and $g_{\vareps}$ behaves different as the first $(n-1)$ components and is of one $\vareps$-order smaller.  Also we mention that the $H^4\cap W^{3,\infty}$-regularity with respect to time for $f_{\vareps}$ and $g_{\vareps}$ is  necessary to establish $H^3$-regularity with respect to time of $\ueps$. A uniform bound with respect to $\vareps$ is only necessary for lower order derivatives.
%

\section{Main results and the Biot-law}
\label{sec:main_results}

Let us formulate the main results of the paper. First of all, we have the following existence, uniqueness, and regularity result:

\begin{theorem}\label{thm:Existence}
There exists a unique weak solution $(\veps,\peps,\ueps)$ of the microscopic problem $\eqref{MicroModel}$. This solution
 fulfills
\begin{align*}
 \veps &\in H^3((0,T),H^1(\oef))^n, \quad 
 \peps \in H^3((0,T),L^2(\oef)),
 \\
 \ueps &\in W^{3,\infty}((0,T),H^1(\oes))^n.
\end{align*}
Additionally, it holds the initial condition $\veps(0) =0$ and $\peps(0) = 0$.
\end{theorem}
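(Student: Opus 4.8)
The plan is to prove existence via a Galerkin approximation adapted to the quasistatic (degenerate-in-time) structure, then upgrade to the stated space--time regularity by differentiating the equation in time, and finally establish the initial conditions $\veps(0)=0$ and $\peps(0)=0$ by a compatibility argument at $t=0$.

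\textbf{Step 1: Galerkin setup and the degeneracy issue.} I would fix a Hilbert basis $(\phi_k)_{k\in\N}$ of the space $H^1(\oeps,\partial_D\oeps)^n$ (for fixed $\vareps$, so $\vareps$-uniformity is irrelevant here), and seek $\ueps^N(t)=\sum_{k=1}^N \alpha_k^N(t)\phi_k|_{\oes}$ together with a fluid part. The subtlety flagged in the introduction is that, because there is no $\partial_t\veps$ term and no time derivative of the pressure, testing $\eqref{eq:micro_model}$ with basis functions produces a differential--algebraic system rather than an ODE: the block coupling $\partial_t\ueps$ appears only through the interface constraint $\partial_t\ueps=\veps$ on $\geps$ and through the elliptic solid bilinear form $\frac1\vareps\int_{\oes}AD(\ueps):D(\phi)$. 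The clean way around this is to work with the \emph{combined} unknown on the constrained space
\begin{align*}
\spaceV_\vareps := \{\, (\v,\u)\in H^1(\oef,\partial_D\oef)^n\times H^1(\oes,\partial_D\oes)^n \, : \, \nabla\cdot\v=0 \text{ in }\oef,\ \v=\u \text{ on }\geps\,\},
\end{align*}
which removes the pressure (it becomes a Lagrange multiplier recovered afterwards). On this space the problem reads: find $\ueps$ with $(\veps,\ueps)\in\spaceV_\vareps$ a.e.\ such that for all $(\phi,\psi)\in\spaceV_\vareps$,
\begin{align*}
\vareps\int_{\oef}D(\veps):D(\phi)\,dx+\frac1\vareps\int_{\oes}AD(\ueps):D(\psi)\,dx=\int_{\oef}f_\vareps\cdot\phi\,dx+\int_{\oes}g_\vareps\cdot\psi\,dx,
\end{align*}
with $\veps=\partial_t\ueps$ on $\geps$. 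Galerkin in $\spaceV_\vareps$ then gives a system that, after eliminating the purely-fluid directions (a positive-definite block from the $\vareps\int D(\veps):D(\phi)$ term restricted to test functions vanishing on $\geps$, invertible since $Z_f$ is connected and meets $\partial_D\oef$), reduces to a genuine ODE for the solid/interface coefficients with a symmetric positive-definite mass-type matrix coming from $H^1$-ellipticity of the trace-coupled solid form (here Korn on $\oes$ with Dirichlet data on $\partial_D\oes$ is used). This is the step I expect to be the main obstacle: making the elimination rigorous and checking the reduced mass matrix is invertible uniformly along the Galerkin levels.

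\textbf{Step 2: A priori bounds and passage to the limit.} Testing the Galerkin equation with $(\veps^N,\partial_t\ueps^N)$ gives, using coercivity from assumption (A1) and the $\vareps$-Korn inequality, the energy identity
\begin{align*}
\vareps\int_0^t\!\!\int_{\oef}|D(\veps^N)|^2+\frac{1}{2\vareps}\int_{\oes}AD(\ueps^N(t)):D(\ueps^N(t))=\int_0^t\!\!\int_{\oef}f_\vareps\cdot\veps^N+\int_0^t\!\!\int_{\oes}g_\vareps\cdot\partial_t\ueps^N,
\end{align*}
and, after a Gronwall/Young argument together with the trace coupling $\partial_t\ueps^N=\veps^N$ on $\geps$ (so $\partial_t\ueps^N$ is controlled in $L^2$ via the displacement Korn inequality plus the interface data), bounds for $\veps^N$ in $L^2((0,T),H^1(\oef))$ and $\ueps^N$ in $H^1((0,T),H^1(\oes))\cap L^\infty((0,T),H^1(\oes))$. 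Weak(-$*$) compactness yields a limit $(\veps,\ueps)\in\spaceV_\vareps$ a.e.\ solving the reduced problem; the pressure $\peps\in L^2((0,T)\times\oef)$ is recovered as the multiplier for the constraint $\nabla\cdot\phi=0$ by a De~Rham / Bogovskii argument (the Bogovskii operator on $\oef$ from the paper gives the needed inf--sup). Uniqueness follows by subtracting two solutions and testing with $(\veps,\partial_t\ueps)$: the right-hand side vanishes, so $\veps=0$ and $D(\ueps(t))=0$, hence $\ueps=0$ by Korn and the zero initial datum, and then $\nabla\peps=0$ with the $\partial_N$ boundary condition forces $\peps=0$.

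\textbf{Step 3: Higher time regularity and initial conditions.} For the $H^3$-in-time statement I would differentiate the weak formulation formally in time up to three times; since the coefficients $A$ are time-independent and the domains fixed, $\partial_t^j\ueps$, $\partial_t^j\veps$, $\partial_t^j\peps$ satisfy the same system with data $\partial_t^j f_\vareps$, $\partial_t^j g_\vareps$, which lie in the required spaces by (A2)--(A3). Rigorously this is done at the Galerkin level (difference quotients in time, using $f_\vareps(0)=g_\vareps(0)=0$ so that the differentiated data again vanish at $t=0$ and no extra initial-layer terms appear), giving $\veps\in H^3((0,T),H^1(\oef))$, $\peps\in H^3((0,T),L^2(\oef))$, $\ueps\in W^{3,\infty}((0,T),H^1(\oes))$ — the $W^{3,\infty}$ rather than $H^3$ for $\ueps$ reflecting that its top time-derivative is only estimated in $L^\infty_t$ via the energy identity for the thrice-differentiated system, consistent with the $W^{3,\infty}$-in-time hypotheses on $f_\vareps,g_\vareps$. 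Finally, $\veps(0)=0$: evaluating the (continuous-in-time, thanks to the regularity just proved) equation at $t=0$ with $\ueps(0)=0$, $f_\vareps(0)=0$, $g_\vareps(0)=0$ gives $\vareps\int_{\oef}D(\veps(0)):D(\phi)-\int_{\oef}\peps(0)\nabla\cdot\phi=0$ for all admissible $\phi$; choosing $\phi$ in the fluid with $\phi=0$ on $\geps$ and using $\nabla\cdot\veps(0)=0$, $\veps(0)=\partial_t\ueps(0)=0$ on $\geps$ (from $\ueps\in W^{1,\infty}$ with $\ueps(0)=0$ and the constraint), Korn on $\oef$ forces $\veps(0)=0$, and then the De~Rham argument gives $\peps(0)=0$.
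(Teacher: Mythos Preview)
Your overall architecture matches the paper's: Galerkin on a divergence-free constrained space, energy estimates by testing with $(\veps,\partial_t\ueps)$, pressure recovered via a closed-range/Bogovskii argument, higher time regularity by differentiating the system, and the initial conditions read off from the equation at $t=0$. The substantive difference is how the degenerate DAE at the Galerkin level is resolved. You propose to split off ``purely-fluid'' directions (test functions vanishing on $\geps$) and invert that block, reducing to an ODE on the remaining coefficients; you correctly flag this as the main obstacle, and indeed making that splitting compatible with a finite-dimensional Galerkin basis and checking invertibility is awkward. The paper avoids this entirely by taking an orthonormal basis $\{\phi_i\}$ of the single space $H=\{\phi\in H^1(\Omega,\partial_D\Omega)^n:\nabla\cdot\phi=0\text{ in }\Omega_f\}$ with inner product $\int_{\Omega_f}D(\phi):D(\psi)+\int_{\Omega_s}AD(\phi):D(\psi)$, writing $v_m=\sum\alpha_i'\phi_i|_{\Omega_f}$ and $u_m=\sum\alpha_i\phi_i|_{\Omega_s}$. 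The Galerkin system is then $B\alpha'+C\alpha=F+G$ with $B+C=I$; diagonalising the symmetric matrix $B=Q^tDQ$ splits the transformed system into genuine scalar ODEs where $D_{ii}\neq0$ and purely algebraic equations $\alpha_i^\ast=-h_i^\ast$ where $D_{ii}=0$ (here the compatibility $f_\vareps(0)=g_\vareps(0)=0$ is used). This trick sidesteps the invertibility issue you identify.

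There is also a gap in your Step~3: the claim $\partial_t\ueps(0)=0$ on $\geps$ does not follow from $\ueps(0)=0$ and $\ueps\in W^{1,\infty}$ --- the initial value says nothing about the initial time derivative --- so your argument that $\veps(0)$ vanishes on $\geps$ is circular. The paper's route is simpler: at $t=0$ the solid and force terms drop (since $\ueps(0)=0$, $f_\vareps(0)=g_\vareps(0)=0$), leaving $\int_{\oef}D(\veps(0)):D(\phi)-\int_{\oef}\peps(0)\nabla\cdot\phi=0$ for all admissible $\phi$; choose $\phi=\veps(0)$ (divergence-free, so the pressure term vanishes) to get $D(\veps(0))=0$ and hence $\veps(0)=0$ by Korn, then choose $\phi$ with $\nabla\cdot\phi=\peps(0)$ to conclude $\peps(0)=0$.
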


\begin{remark}
To obtain existence of a weak solution it would be enough to assume $H^2$ and $W^{1,\infty}$ for $g_{\vareps}$ and $H^1$ for $f_{\vareps}$ with respect to time. This guarantees $\peps \in H^1((0,T),L^2(\oef))$, which is the necessary regularity for the limit pressure $p_0$. However, to obtain suitable $\vareps$-uniform a priori estimates (in particular for $\veps$ and $\partial_t \ueps$), higher regularity for the micro-solution is necessary, see Section \ref{sec:Apriori_Estimates} for details.
\end{remark}

The sequence of microscopic solutions $(\veps,\peps,\ueps)$ converges in a suitable sense, see Section \ref{sec:compactness_results} for the precise results, to limit functions $(v_0,v_1,p_0,\widetilde{u}_1,u_0^n)$. The zeroth and first order fluid velocity correctores $v_0$ and $v_1$ can be expressed in terms of $p_0$, $\widetilde{u}_1$, and $u_0^n$. The latter quantities are the unique weak solutions of a Biot-plate-equation. This macroscopic model will be formulated in the following Theorem (we refer to Section \ref{sec:derivation_macro_model} for the definition of the effective coefficients $B^1,B^2,\alpha^h,K,d_n^f,d_n^s,a^{\ast},b^{\ast},c^{\ast}$ and the averaged quantities $\overline{f_1^n}$ and $\overline{g_1^n}$): 

\begin{theorem}\label{thm:macro_model}
The limit functions $(p_0,\widetilde{u}_1,u_0^n)$ from Proposition \ref{prop:conv_displacement} and \ref{prop:conv_pressure} are the unique weak solutions of the following Biot-plate-model:
$p_0$ solves the generalized Darcy-equation
\begin{subequations}\label{Macro_Model}
\begin{align}
\label{Macro_Model:Darcy_law}
\alpha^h \partial_t p_0 + \nabla_{\x} \cdot \left(K(f_0 - \nabla_{\x} p_0)\right) \\
=\partial_t \big[ \big(B^1 - |Z_f|I\big):D_{\x}(\widetilde{u}_1)& + \big( B^2 + d_n^f I\big) : \nabla_{\x}^2 u_0^n\big]  &\mbox{ in }& (0,T)\times \Sigma,
\notag
\\
K(f_0 - \nabla_{\x} p_0) \cdot \nu &= 0 &\mbox{ on }& (0,T)\times \partial_D \Sigma,
\\
p_0 &= 0 &\mbox{ on }& (0,T)\times \partial_N \Sigma.
\end{align}
The limit functions $u_0^n$ and $\widetilde{u}_1$  solve the  plate equation
\begin{align}
\label{eq:Macro_Model_elastic}
-\nabla_{\x} \cdot \left(a^{\ast} D_{\x}(\widetilde{u}_1) + b^{\ast} \nabla_{\x}^2 u_0^n  + p_0 \left[ B^1 - |Z_f|I\right] \right) &=  |Z_f| f_0  +|Z_s| g_0  &\mbox{ in }& (0,T)\times \Sigma,
\\
\label{Macro_Model:plate_4th_order}
\nabla_{\x}^2 : \left(b^{\ast}D_{\x}(\widetilde{u}_1) +   c^{\ast} \nabla_{\x}^2 u_0^n + p_0 \left[B^2 + d_n^f I\right] \right) 
\\
= \overline{f_1^n} + d_n^f \nabla_{\x} \cdot f_0 &+\overline{g_1^n} + d_n^s \nabla_{\x} \cdot g_0 &\mbox{ in }& (0,T)\times \Sigma,
\notag
\\
\widetilde{u}_1 = u_0^n = \nabla_{\x} u_0^n &= 0 &\mbox{ on }& (0,T)\times \Sigma.
\end{align}
\end{subequations}
Additionally, we have the initial condition $(p_0,\widetilde{u}_1,u_0^n)(0) = 0$ in $\Sigma$.
\\

The limit function $v_1$ from Remark \ref{rem:v1_w1} fulfills 
\begin{align*}
v_1 =\sum_{i=1}^n \left(f_0 - \nabla_{\x}p_0\right)_i q_i + \partial_t \widetilde{u}_1 - y_n \partial_t \nabla_{\x} u_0^n,
\end{align*}
with $q_i$ from the cell problem $\eqref{CellProblem_Stokes}$ and the Darcy-velocity $\v_1$ is given by
\begin{align}\label{def:Darcy_velocity}
\v_1 : = \int_{Z_f} v_1 dy = K(f_0 - \nabla_{\x} p_0) +|Z_f| \partial_t \widetilde{u}_1 - d_n^f \partial_t \nabla_{\x} u_0^n
\end{align}
with $d_n^f:= \int_{Z_f} y_n dy$ and the permeability tensor $K$ defined in $\eqref{def:permeability_tensor}$.
\end{theorem}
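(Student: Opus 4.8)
The plan is to perform the limit $\vareps\to0$ directly in the weak microscopic identity $\eqref{eq:micro_model}$, using the compactness statements for $\ueps$, $\peps$ and the velocity corrector from Propositions \ref{prop:conv_displacement}, \ref{prop:conv_pressure} and Remark \ref{rem:v1_w1}, and then to split the resulting two-scale limit problem into the cell problems --- which express the fluid correctors $v_0,v_1$ and the elasticity correctors through $(p_0,\widetilde{u}_1,u_0^n)$ --- and the macroscopic equations $\eqref{Macro_Model}$. Since the test space $H^1(\oeps,\partial_D\oeps)^n$ already encodes the transmission conditions, every test function $\phieps$ used below is chosen continuous across $\geps$ and vanishing on the Dirichlet boundary; in particular a test function supported in the fluid must vanish on the cell interface $\Gamma$, which is precisely the space in which the Stokes cell correctors $q_i$ of $\eqref{CellProblem_Stokes}$ live. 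As $\eqref{Macro_Model}$ is linear, it then remains to show (a) that the limit functions solve it and (b) that it has at most one weak solution --- (b) also upgrading the subsequential convergence of the compactness results to convergence of the whole family.

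\emph{Darcy law and the velocity correctors.} Testing with functions concentrated in the fluid part, schematically $\phieps(x)=\vareps\,\psi(\x)\,q\bxfxe$ with $\psi\in C_c^\infty(\Sigma)$ and $q$ in the divergence-free cell space on $Z_f$ vanishing on $\Gamma$, the factor $\vareps$ in front of $D(\veps)$ balances the viscous and pressure terms, and in the limit $v_1(t,\x,\cdot)$ is seen to solve, for a.e.\ $(t,\x)$, the cell Stokes problem $\eqref{CellProblem_Stokes}$ with forcing $f_0-\nabla_{\x}p_0$ and boundary value on $\Gamma$ equal to the Kirchhoff--Love velocity $\partial_t\widetilde{u}_1-y_n\partial_t\nabla_{\x}u_0^n$, the latter dictated by the interface condition $\partial_t\ueps=\veps$ together with the structure of the displacement limit in Proposition \ref{prop:conv_displacement}. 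By linearity this gives $v_1=\sum_{i=1}^{n}(f_0-\nabla_{\x}p_0)_i q_i+\partial_t\widetilde{u}_1-y_n\partial_t\nabla_{\x}u_0^n$, and integrating over $Z_f$ (with $\int_{Z_f}q_i\,dy$ the columns of $K$ from $\eqref{def:permeability_tensor}$ and $\int_{Z_f}y_n\,dy=d_n^f$) yields $\eqref{def:Darcy_velocity}$. For the macroscopic mass balance I would test $\nabla\cdot\veps=0$ against a purely macroscopic $\psi(\x)$, integrate by parts, insert $\veps=\partial_t\ueps$ on $\geps$ and $\veps=0$ on $\partial_D\oef$, and pass to the limit; the two leading orders vanish by the periodicity of the cell, and the first surviving order reads $\nabla_{\x}\cdot\overline{v}_1+\partial_t(\text{interface volume change})=0$. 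Inserting $\eqref{def:Darcy_velocity}$ and identifying the interface volume change as a linear functional of $D_{\x}(\widetilde{u}_1)$, $\nabla_{\x}^2 u_0^n$ and $p_0$ --- which is what defines $B^1$, $B^2$ and the storage coefficient $\alpha^h$ --- gives $\eqref{Macro_Model:Darcy_law}$, with the no-flux condition on $\partial_D\Sigma$ coming from $\veps=0$ on $\partial_D\oef$ and the condition $p_0=0$ on $\partial_N\Sigma$ from the stress boundary condition on $\partial_N\oef$. Making the last step rigorous requires the two-pressure decomposition of Appendix \ref{sec:two_pressure_decomp}, adapted to the mixed lateral conditions, so that the auxiliary second pressure drops out and only $p_0$ survives.

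\emph{Plate equations.} Here I would test $\eqref{eq:micro_model}$ with Kirchhoff--Love functions plus microscopic correctors, schematically $\phieps(x)\approx\bigl(\phi^{\x}(\x)-x_n\nabla_{\x}\phi^n(\x),\,\phi^n(\x)\bigr)+\vareps^2\,\chi\bxfxe$, and pass to the limit. The solid term $\vareps^{-1}\int_{\oes}AD(\ueps):D(\phieps)$ yields, after eliminating the elasticity cell correctors, the homogenized tensors $a^\ast,b^\ast,c^\ast$ contracted with $D_{\x}(\widetilde{u}_1)$ and $\nabla_{\x}^2 u_0^n$; the pressure term $-\int_{\oef}\peps\nabla\cdot\phieps$ produces the couplings $p_0[B^1-|Z_f|I]$ and $p_0[B^2+d_n^f I]$; and the data terms, using assumptions \ref{ass:f_eps}--\ref{ass:g_eps}, converge to $|Z_f|f_0+|Z_s|g_0$ and --- here the one-$\vareps$-order-smaller $n$-th components are essential --- to $\overline{f_1^n}+d_n^f\nabla_{\x}\cdot f_0+\overline{g_1^n}+d_n^s\nabla_{\x}\cdot g_0$. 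Splitting the limiting identity according to whether the test function excites the in-plane field $\phi^{\x}$ or the transversal field $\phi^n$ gives, respectively, the second-order balance $\eqref{eq:Macro_Model_elastic}$ and the fourth-order plate equation $\eqref{Macro_Model:plate_4th_order}$; the clamped boundary data for $\widetilde{u}_1$ and $u_0^n$ follow from $\ueps=0$ on $\partial_D\oes$ together with Proposition \ref{prop:conv_displacement}.

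\emph{Uniqueness and the main obstacle.} By linearity, uniqueness follows once vanishing data and vanishing initial conditions are shown to force the trivial solution: testing $\eqref{Macro_Model:Darcy_law}$ with $p_0$, $\eqref{eq:Macro_Model_elastic}$ with $\partial_t\widetilde{u}_1$ and $\eqref{Macro_Model:plate_4th_order}$ with $\partial_t u_0^n$, and adding, the Biot-type symmetry of the effective coefficients makes all pressure-coupling terms cancel, leaving the energy identity $\tfrac12\tfrac{d}{dt}(\int_\Sigma\alpha^h p_0^2 + \mathcal E(\widetilde{u}_1,u_0^n)) + \int_\Sigma K(\nabla_{\x}p_0)\cdot\nabla_{\x}p_0\,d\x = 0$ with $\mathcal E$ the nonnegative homogenized elastic energy; since all three terms are nonnegative and the data vanish, one concludes $p_0\equiv0$ (from positivity of $K$ and $p_0=0$ on $\partial_N\Sigma$), $D_{\x}(\widetilde{u}_1)\equiv0$, $\nabla_{\x}^2 u_0^n\equiv0$, and then $\widetilde{u}_1\equiv u_0^n\equiv0$ by a Korn/Poincaré inequality on $\Sigma$ with the clamped conditions. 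The step I expect to be hardest is the identification in the Darcy paragraph: one must propagate the two different $\vareps$-scalings of the in-plane and the transversal displacement simultaneously through $\partial_t\ueps=\veps$ in order to fix the trace of $v_1$ on $\Gamma$ --- hence the precise source term of $\eqref{Macro_Model:Darcy_law}$ --- and one must carry out the two-pressure decomposition compatibly with the mixed lateral conditions on $\partial_D\oef\cup\partial_N\oef$; together these constitute the relation between the Kirchhoff--Love displacement and the limit fluid velocity and absorb the bulk of the technical work.
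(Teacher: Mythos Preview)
Your proposal is essentially correct and follows the same overall architecture as the paper's Section~\ref{sec:derivation_macro_model}: derive a two-pressure Stokes cell problem for the fluid corrector, obtain the Darcy law from a macroscopic mass balance, derive the plate equation from Kirchhoff--Love test functions, and close with the energy argument for uniqueness. The uniqueness paragraph matches the paper almost verbatim.

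There is one point where your sketch hides a genuine step. In the Darcy paragraph you write that the ``interface volume change'' is to be ``identified as a linear functional of $D_{\x}(\widetilde{u}_1)$, $\nabla_{\x}^2 u_0^n$ and $p_0$ --- which is what defines $B^1$, $B^2$ and $\alpha^h$''. This identification does not come for free from the zeroth- and first-order displacement limits: it requires the second-order corrector $u_2$. The paper handles this in two separate steps. First (Proposition~\ref{prop:corrector_u2}) it tests $\eqref{eq:micro_model}$ with purely oscillating $\phi(t,\x,x/\vareps)$ to obtain the cell problem for $u_2$, whose solution reads $u_2=\sum D_{\x}(\widetilde{u}_1)_{ij}\chi_{ij}+\sum\partial_{ij}u_0^n\,\chi_{ij}^B+p_0\,\chi_0$, with a \emph{pressure} cell corrector $\chi_0$ solving $\eqref{CellProblemPressure}$. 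Second, the divergence identity of Proposition~\ref{prop:conv_divergence} gives $\nabla_{\x}\cdot\w_1$ in terms of $\int_\Gamma \partial_t u_2\cdot\nu$, and only after substituting the representation of $u_2$ do the coefficients $\alpha^h=-\int_\Gamma\chi_0\cdot\nu$, $B^1_{ij}=\int_\Gamma\chi_{ij}\cdot\nu$, $B^2_{ij}=\int_\Gamma\chi_{ij}^B\cdot\nu$ appear. Your ``test $\nabla\cdot\veps=0$ and use $\veps=\partial_t\ueps$ on $\geps$'' is exactly the mechanism behind Proposition~\ref{prop:conv_divergence}, but to extract the right-hand side of $\eqref{Macro_Model:Darcy_law}$ you must pass to the limit at one order higher than the Kirchhoff--Love expansion, and that is precisely $u_2$.

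A minor organisational difference: for the plate equation the paper tests \emph{only} with macroscopic Kirchhoff--Love functions $\vareps^{-2}[V(\x)e_n+\vareps(U(\x)-(x_n/\vareps)\nabla_{\x}V)]$ (no $\vareps^2\chi$ attached) and then inserts the already-established form of $u_2$ into the limit to produce $a^\ast,b^\ast,c^\ast,B^1,B^2$. Your variant with correctors in the test function would also work, but the paper's separation --- first solve for $u_2$, then use it both in the Darcy law and in the plate equation --- is cleaner and avoids duplicating the cell-problem computation.
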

 
The Darcy-velocity $\v_1$ is zero in the $n$-th component. Further, also for the function $\widetilde{u}_1$  we have $\widetilde{u}_1^n = 0$ and therefore we can consider $\widetilde{u}_1$ as a function in $L^2(\Sigma,H_0^1(\Sigma))^{n-1}$. In particular, using $f_0^n = g_0^n=0$ equation $\eqref{eq:Macro_Model_elastic}$ is a system of $n-1$ equations.

Let us formulate the weak equation for the macroscopic model $\eqref{Macro_Model}$: The triple $(p_0,\widetilde{u}_1,u_0^n)$ is a weak solution of the problem $\eqref{Macro_Model}$ if 
\begin{align*}
p_0 &\in L^2((0,T),H^1(\Sigma,\partial_N \Sigma))\cap H^1((0,T),L^2(\Sigma)),
\\
\widetilde{u}_1 &\in H^1(0,T),H^1_0(\Sigma))^{n-1}, \quad 
u_0^n \in H^1((0,T),H_0^2(\Sigma))
\end{align*}
and for all $\phi \in H^1(\Sigma,\partial_N \Sigma)$ it holds almost everywhere in $(0,T)$ that
\begin{align}
\begin{aligned}\label{eq:weak_form_Darcy}
\int_{\Sigma} \alpha^h \partial_t p_0 \phi d\x + \int_{\Sigma} &K(\nabla_{\x} p_0 - f_0) \cdot \nabla_{\x} \phi d\x
\\
=& \int_{\Sigma} \partial_t \left[ \left(B^1 - |Z_f|I\right):D_{\x}(\widetilde{u}_1) + \left( B^2 + d_n^f I\right) : \nabla_{\x}^2 u_0^n\right] \phi  d\x.
\end{aligned}
\end{align}
Further, for all  $U\in H^1_0(\Sigma)^{n-1} $ and $V \in H_0^2(\Sigma)$ it holds almost everywhere in $(0,T)$
\begin{align}
\begin{aligned}\label{eq:weak_form_plate}
\int_{\Sigma}& a^{\ast} D_{\x}(\widetilde{u}_1) : D_{\x} (U) + b^{\ast} \nabla_{\x}^2 u_0^n : D_{\x} (U) + b^{\ast} D_{\x}(\widetilde{u}_1) : \nabla_{\x}^2 V + c^{\ast} \nabla_{\x}^2 u_0^n : \nabla_{\x}^2 V d\x
\\
&+ \int_{\Sigma} p_0 [B^1 - |Z_f|I]:D_{\x}(U) + p_0[B^2 + d_n^f I]: \nabla_{\x}^2 V d\x
\\
=&\int_{\Sigma} \overline{f_1^n} V + f_0 \cdot \left[|Z_f| U - d_n^f \nabla_{\x} V\right] d\x +\int_{\Sigma} \overline{g_1^n} V + g_0 \cdot \left[|Z_s| U - d_n^s \nabla_{\x} V\right] d\x
\end{aligned}
\end{align}
and it holds the initial condition $(p_0,\widetilde{u}_1,u_0^n)(0) = 0$ in $\Sigma$.
\\

In summary, we obtain that in the topology of the two-scale convergence (except the $n$-th component in the first order corrector of $v_{\vareps,\app}$, see also Remark \ref{rem:v1_w1}), the microscopic solution $(\veps,\peps,\ueps)$ can be approximated by 
\begin{align}
\begin{aligned}\label{def:approximation_micro_solution}
v_{\varepsilon}^{\app}(t,x) &= \partial_t u_0^n(t,\bar{x}) e_n 
\\
&\hspace{2em}+ \vareps \bigg[\sum_{i=1}^n \left(f_0(\x) - \nabla_{\x}p_0(\x)\right)_i q_i\left(\fxe\right) + \partial_t \tilde{u}_1(t,\bar{x}) -\dfrac{x_n}{\vareps} \nabla_{\x} \partial_t u_0^n(t,\bar{x}) \bigg],
\\
p_{\varepsilon}^{\app}(t,x) &= p_0(t,\x),
\\
u_{\varepsilon}^{\app}(t,x) &= u_0^n(t,\bar{x}) e_n + \vareps \left[ \tilde{u}_1(t,\bar{x}) - \dfrac{x_n}{\vareps} \nabla_{\x} u_0^n(t,\bar{x}) \right] + \vareps^2 u_2\left(t,\x,\fxe\right) 
\end{aligned}
\end{align}
with $u_2$ defined in Proposition \ref{prop:corrector_u2}.

\section{Existence and uniqueness of weak solutions}
\label{sec:Existence}

We show existence and uniqueness of a weak solution for the microscopic problem. For this we can fix the scaling parameter $\vareps$ and therefore we drop in this section the index $\vareps$ or put it equal to $1$ in the microscopic equation. More precisely, we use the notation $\Omega:= \oeps$, $\oef:= \Omega_f$, and $\Omega_s:= \oes$. The fluid-solid interface is denoted by $\Gamma:= \geps$, and the lateral boundaries by $\partial_D \Omega_f$, $\partial_N \Omega_f$, and $\partial_D \Omega_S$, respectively. 
Hence, we have to show the existence of a unique triple $(v,p,u)$ such that
\begin{align*}
v\in L^2((0,T),H^1(\Omega_f,\partial_D \Omega_f))^n, \, p\in L^2((0,T)\times \Omega_f),\, u\in H^1((0,T),H^1(\Omega_s,\partial_D \Omega_s))^n,
\end{align*}
with $\nabla\cdot v = 0$ and $v= \partial_t u$ on $\Gamma$, and for all $\phi \in H^1(\Omega,\partial_D \Omega)^n$ it holds almost everywhere in $(0,T)$
\begin{align}
\begin{aligned}\label{eq:var_eq_existence}
\int_{\Omega_f} D(v) : D(\phi) dx - \int_{\Omega_f}& p\nabla\cdot \phi dx + \int_{\Omega_s} AD(u) : D(\phi)dx 
\\
&= \int_{\Omega_f} f \cdot \phi dx + \int_{\Omega_s} g\cdot \phi dx.
\end{aligned}
\end{align}
Further, it holds the initial condition $u(0) = 0$.
Additionally, we show the regularity results with respect to time formulated in Theorem \ref{thm:Existence}. 
\\

Uniqueness follows directly by choosing $f = 0$ and $g=0$ and testing the weak equation with $\partial_tu$ and $v$. To establish the existence of a solution $(v,p,u)$ we use the Galerkin-method. The only critical point (which seems to be non-standard compared to parabolic problems) is that the associated system of ordinary differential equations for the coefficients in the Galerkin-approximation has no full rank. We define
\begin{align*}
H:= \left\{ \phi \in H^1(\Omega, \partial_D \Omega_f \cup \partial_D \Omega_s)^n\, : \, \nabla \cdot \phi = 0 \, \mbox{ in } \Omega_f\right\},
\end{align*}
together with the inner product
\begin{align*}
(\phi,\psi)_H:= \int_{\Omega_f} D(\phi):D(\psi) dx + \int_{\Omega_s} A D(\phi): D(\psi) dx.
\end{align*}
The associated norm is denoted by $\|\cdot\|_H$. We emphasize that $(\cdot , \cdot)_H$ is a norm due to the ellipticity of $A$ and the Korn inequality. It is standard to show, see for example \cite[Chapter 6.5]{EvansPartialDifferentialEquations}, that there exists an orthonormal basis $\{\phi_i\}_{i\in \N}$  of $H$ with respect to the inner product $(\cdot,\cdot)_H$. Further, we define $H_m:= \mathrm{span}\{\phi_1,\ldots,\phi_m\}$. Our aim is to find functions $\alpha_{i,m}:(0,T)\rightarrow \R$ for $m\in \N$ fixed and $i=1,\ldots,m$, such that 
\begin{align*}
v_m (t,x) &= \sum_{i=1}^m \alpha'_{i,m}(t) \phi_i(x) &\mbox{ for a.e.}& (t,x)\in (0,T)\times \Omega_f, \\
u_m(t,x) &= \sum_{i=1}^m \alpha_{i,m}(t) \phi_i(x) &\mbox{ for a.e.}& (t,x)\in (0,T)\times \Omega_s,
\end{align*}
are approximations of the functions $v$ and $u$. For $\alpha_{i,m}$ sufficiently smooth, we immediately obtain $\partial_t u_m = v_m$ on $\Gamma$. As initial conditions for $\alpha_{i,m}$ we need
\begin{align*}
\alpha_{i,m}(0) = 0 \quad \mbox{ for } i=1,\ldots,m.
\end{align*}
Further, $v_m$ and $u_m$ have to solve the following problem: For all $\phi_i$ with $i=1,\ldots,m$ it holds that
\begin{align}\label{eq:vm_um_Galerkin}
\int_{\Omega_f} D(v_m):D(\phi_i) dx + \int_{\Omega_s} AD(u_m):D(\phi_i) dx = \int_{\Omega_f} f\cdot \phi_i dx + \int_{\Omega_s} g\cdot \phi_i dx.
\end{align}
 Obviously, $\eqref{eq:vm_um_Galerkin}$ is also valid for all test-functions in $H_m$.  We define
\begin{align*}
F&:(0,T)\rightarrow \R^m, &F(t) := \left(\int_{\Omega_f} f \cdot \phi_1 dx,\ldots, \int_{\Omega_f} f \cdot \phi_m dx \right),
\end{align*}
and in a similar way $G:(0,T)\rightarrow \R^m$.
By the regularity assumptions of $f$ and $g$ we obtain  $F,G \in H^4(0,T)^m$. Further, we define $B, C \in \R^{m\times m}$ and $\alpha: (0,T)\rightarrow \R^m$ by
\begin{align*}
B_{ij} := \int_{\Omega_f} D(\phi_i) : D(\phi_j) dx, \,\,\, C_{ij}:= \int_{\Omega_s} AD(\phi_i) : D(\phi_j) dx, \quad \alpha:= (\alpha_{1,m} , \ldots,\alpha_{m,m}).
\end{align*}
Here we suppress the index $m$. With an elemental calculation we get from $\eqref{eq:vm_um_Galerkin}$
\begin{align}\label{eq:ODE_Galerkin}
B \alpha' + C\alpha = F + G.
\end{align}
Here we would like to have the invertibility of $B$. However, this can not be guaranteed by the definition. In fact $\{\phi_i\}_{i=1}^m$ is a basis for $H_m$, hence in particular linearly independent. But it seems that this is not enough to guarantee that the $\phi_i$ are linearly independent on $\Omega_f$. In other words, we can have $\phi:= \sum_{i=1}^m \beta_i \phi_i = 0$ on $\Omega_f$ for $\beta_i$ not all equal to zero (of course in this case $\phi \neq 0$ in $\Omega_s$). Hence, we have to argue in different way to obtain the existence of equation $\eqref{eq:ODE_Galerkin}$.
By definition we have 
\begin{align*}
B_{ij} + C_{ij} = (\phi_i , \phi_j)_H = \delta_{ij},
\end{align*}
hence, we have $B + C = E_m$ with the unit matrix $E_m$ in $\R^{m\times n}$. Since $B$ is symmetric, we obtain the existence of an orthogonal matrix $Q\in \mathcal{O}(m)$ and a diagonal matrix $D \in \R^{m\times m}$, such that $B = Q^t D Q$. Especially, we get
\begin{align*}
C = E_m - B = E_m - Q^t D Q.
\end{align*}
With $\alpha^{\ast}:= Q \alpha$ and $h^*:= Q(F+G)$ we obtain from $\eqref{eq:ODE_Galerkin}$ 
\begin{align*}
D(\alpha^{\ast})^{\prime} = (E_m - D) \alpha^{\ast} + h^{\ast}.
\end{align*}
Without loss of generality we assume that for $l\in \{1,\ldots,m\}$ we have $D_{ii} \neq 0$ for $1\le i \le l$ and $D_{ii} = 0$ for $i>l$ (for $m$ large enough $B$ and therefore $D$ is not zero). Hence, for $i=1,\ldots,l$ the functions $\alpha^{\ast}_i $ are given by the ODE
\begin{align*}
D_{ii} (\alpha^{\ast}_i)^{\prime} = (1 - D_{ii}) \alpha^{\ast}_i + h^{\ast}_i, \quad \alpha^{\ast}_i (0) = 0.
\end{align*}
This problem admits a unique solution $\alpha^{\ast}_i \in H^5(0,T)$. Now, for $D_{ii} = 0$ (i.e., $i>l$) we get the algebraic equation
\begin{align*}
\alpha^{\ast}_i = -h_i^{\ast}.
\end{align*}
We emphasize that in this case, we only have the regularity $\alpha^{\ast}_i \in H^4(0,T)$, so we loose one order of differentiability (compared to $i\le l$). Here, it is crucial to have the compatibility condition $h^{\ast}(0) = 0$. Finally, we obtain $\alpha$ by the transformation $\alpha = Q^t \alpha^{\ast} \in H^4(0,T)^m$.

Altogether, we obtain $v_m \in H^3((0,T),H^1(\Omega_f,\partial_D \Omega_f))^n$ with $\nabla \cdot v_m = 0$ and $u_m \in H^4((0,T),H^1(\Omega_s,\partial_D \Omega_s))^n$. Further, $(v_m,u_m)$ solves the problem $\eqref{eq:vm_um_Galerkin}$ with the initial condition $u_m(0) = 0$. Next, we need a priori estimates with respect to $m$. Since similar arguments are needed in more detail later for the $\vareps$-uniform a priori estimates we only sketch the main ideas. As a test-function in $\eqref{eq:vm_um_Galerkin}$ we choose $v_m$ in $\Omega_f$ and $\partial_t u_m $ in $\Omega_s$, to obtain
\begin{align*}
\|D(v_m)\|^2_{L^2(\Omega_f)} + \frac12 \frac{d}{dt} \|\sqrt{A} D(u_m)\|^2_{L^2(\Omega_s)} = \int_{\Omega_f} f\cdot v_m dx + \int_{\Omega_s} g\cdot \partial_t u_m dx.
\end{align*}
Integration with respect to time and using $D(u_m)(0) = 0$, we obtain for all $t\in (0,T)$
\begin{align}
\begin{aligned}\label{eq:apriori_Galerkin}
\|D(v_m)\|^2_{L^2((0,t)\times \Omega_f)} +\frac12 \|&\sqrt{A} D(u_m)(t)\|^2_{L^2(\Omega_s)} 
\\
&= \int_0^t \int_{\Omega_f} f\cdot v_m dx dt + \int_0^t \int_{\Omega_s} g \cdot \partial_t u_m dx dt.
\end{aligned}
\end{align}
For the first term on the right-hand side we obtain for $\theta>0$ arbitrary  and a constant $C_{\theta}>0$ independent of $m$
\begin{align*}
\left| \int_0^t \int_{\Omega_f} f \cdot v_m dx \right| \le C_{\theta} \|f\|^2_{L^2((0,T)\times \Omega_f)} + \theta \|D(v_m)\|^2_{L^2((0,t)\times \Omega_f)}.
\end{align*}
For the second term on the right-hand side in $\eqref{eq:apriori_Galerkin}$ we get with integration by parts with respect to time (and the assumption $g(0) = 0$ in \ref{ass:g_eps}) and the Korn inequality:
\begin{align*}
\int_0^t \int_{\Omega_s} g \cdot \partial_t u_m dx dt &= - \int_0^T \int_{\Omega_s} \partial_t g u_m dx dt + \int_{\Omega_s} g(t,x) u_m(t,x) dx dt
\\
&\le C_{\theta} \left(\|\partial_t g\|^2_{L^2((0,T)\times \Omega_s)} + \|g\|^2_{L^{\infty}((0,T),L^2(\Omega_s))} \right)  
\\
 &\hspace{2em}+ \|D(u_m)  \|_{L^2((0,t)\times \Omega_s)}^2 + \theta \|D(u_m)(t)\|_{L^2(\Omega_s)}^2.
\end{align*}
Choosing $\theta$ small enough  and using the Gronwall inequality, we obtain
\begin{align*}
\|D(v_m)\|_{L^2((0,T)\times \Omega_f)}& + \|D(u_m)\|_{L^{\infty}((0,T),L^2(\Omega_s))} 
\\
&\hspace{-4em}\le C \left( \|\partial_t g\|_{L^2((0,T)\times \Omega_s)} + \|g\|_{L^{\infty}((0,T),L^2(\Omega_s))} + \|f\|_{L^2((0,T)\times \Omega_f)} \right).
\end{align*}
Using again the Korn inequality, we obtain the a priori estimate
\begin{align*}
\|v_m\|_{L^2((0,T),H^1(\Omega))}& + \|u_m\|_{L^{\infty}((0,T),H^1(\Omega))}
\\
&\le  C \left( \|\partial_t g\|_{L^2((0,T)\times \Omega_s)} + \|g\|_{L^{\infty}((0,T),L^2(\Omega_s))} + \|f\|_{L^2((0,T)\times \Omega_f)} \right)
\end{align*}
for a constant $C>0$ independent of $m$. Differentiating equation $\eqref{eq:vm_um_Galerkin}$ with respect to time (possible because of the time-regularity of $v_m$ and $u_m$, as well as $f$ and $g$), and using the same arguments as above, we obtain the estimate
\begin{align*}
\|v_m&\|_{H^3((0,T),H^1(\Omega_f))} + \|u_m\|_{W^{3,\infty}((0,T),H^1(\Omega_s))} 
\\
&\le  C \left( \|g\|_{H^4((0,T),L^2( \Omega_s))} + \|g\|_{W^{3,\infty}((0,T),L^2(\Omega_s))} + \|f\|_{H^3(((0,T),L^2( \Omega_f))} \right).
\end{align*}
Hence, there exists $v \in H^3((0,T),H^1(\Omega_f))^n$ and $u \in W^{3,\infty}((0,T),H^1(\Omega_s))^n$, such that up to a subsequence  for every $q \in [1,\infty)$.
\begin{align*}
v_m &\rightharpoonup v &\mbox{ weakly in }& H^3((0,T),H^1(\Omega_f))^n,
\\
u_m &\rightharpoonup u &\mbox{ weakly in }& W^{3,q}((0,T),H^1(\Omega_s))^n.
\end{align*}
The last convergence is also valid in the weak$^{\ast}$ topology for $q = \infty$. It is easy to check that $v$ and $u$ solve the problem
\begin{align}\label{eq:Galerkin_final}
\int_{\Omega_f} D(v) : D(\phi_i) dx + \int_{\Omega_s} AD(u):D(\phi_i) dx = \int_{\Omega_f} f \cdot \phi_i dx + \int_{\Omega_s} g \cdot \phi_i dx
\end{align}
for all $i \in \N$ and almost everywhere in $(0,T)$. Since $\phi_i$ is a basis of $H$, we obtain by density that this equation is also valid for arbitrary $\phi \in H$ instead of $\phi_i$. This proves the existence of a weak solution. 


It remains to establish the existence of the pressure $p$ and the initial condition.  We define the mapping (we neglect the dependence on the time variable which only acts as an additional parameter)
\begin{align*}
T: H^1(\Omega, \partial_D \Omega_f \cup \partial_D \Omega_s)^n \rightarrow L^2(\Omega_f), \qquad T(\phi) = \nabla \cdot \phi |_{\Omega_f}.
\end{align*}
It is easy to check that this operator is surjective and therefore closed. Further, the kernel of $T$ is given by $\mathcal{N}(T) = H$. We define $F: H^1(\Omega, \partial_D \Omega_f \cup \partial_D \Omega s)^n \rightarrow \R$ by
\begin{align*}
F(\phi) := \int_{\Omega_f} D(v) : D(\phi) dx + \int_{\Omega_s} A D(u):D(\phi)dx - \int_{\Omega_f} f\cdot \phi dx - \int_{\Omega_s} g\cdot \phi dx.
\end{align*}
Hence, $F=0$ on $H$ and the existence of a unique element $p \in L^2((0,T)\times \Omega_f)$ with $F(\phi) = (p,\nabla \cdot \phi)_{L^2(\Omega_f)}$ follows from the closed range theorem. To show that $p \in H^3((0,T),L^2(\Omega_f))$ we differentiate $\eqref{eq:Galerkin_final}$ with respect to $t$ and proceed in the same way as above.

Finally, for the initial condition $p(0) = 0$ and $v(0) = 0$ we notice that $v\in C^0([0,T],H^1(\Omega_f))^n$, $p \in C^0([0,T],L^2(\Omega_f))$, and $u\in C^0([0,T],H^1(\Omega_s))^n$. Hence, the variational equation $\eqref{eq:var_eq_existence}$ is also valid for $t=0$. This means with $u(0) = 0$, $f(0) = 0$, and $g(0)=0$ that for all $\phi \in H^1(\Omega,\partial_D \Omega_f \cup \partial_D \Omega_s)^n$
\begin{align*}
\int_{\Omega_f} D(v)(0) : D(\phi) dx - \int_{\Omega_f} p(0) \nabla \cdot \phi dx =0.
\end{align*}
Choosing $\phi = v(0)$ and then $\phi$ such that $\nabla\cdot \phi = p(0)$, we get $v(0) = 0$ and $p(0) =0$.

\section{A priori estimates for the microscopic solutions}
\label{sec:Apriori_Estimates}

In this section we derive a priori estimates with respect to $\vareps$. For the homogenization we need suitable estimates for $(\veps,\peps,\ueps)$. Here, a critical point is the force term for the fluid velocity and the scaling in the fluid equation. To overcome this problem and to obtain better estimates with respect to $\vareps$ it will be necessary to control the second time derivatives of $\ueps$. For this, we differentiate the microscopic problem $\eqref{MicroModel}$ with respect to time. This leads (due to our initial condition for $\ueps$, $f_{\vareps}$, and $g_{\vareps}$) to the same variational equation for the time derivatives (of course, forces including also the time derivatives). Hence, in the following lemma we introduce an abstract variational problem for which we derive $\vareps$-uniform a priori estimates.

\begin{lemma}\label{lem:abstract_apriori_lem1}
Let $\Veps \in L^2((0,T),H^1(\oef))^n$  and $\Ueps \in H^1((0,T),H^1(\oes))^n$ with   $\nabla \cdot \Veps = 0$ and  $\partial_t \Ueps = \Veps$ on $\geps$, and for all $\phieps \in H^1(\oeps,\partial_D \oeps)^n$ it holds that
\begin{align}\label{eq:abstract_Micro_Model}
\vareps\int_{\oef}D(\Veps): D(\phieps) dx + \frac{1}{\vareps} \int_{\oes} A D(\Ueps) : D(\phieps) dx = \int_{\oef} F_{\vareps} \cdot \phieps dx  + \int_{\oes} G_{\vareps} \cdot \phieps dx
\end{align}
with given $F_{\vareps} \in L^2((0,T)\times \oes)^n$ and $G_{\vareps} \in H^1((0,T),L^2(\oes))^n\cap L^{\infty}((0,T),L^2(\oes))^n$ with $G_{\vareps}(0) = 0$. Further, we assume the initial condition $\Ueps(0)=0$.
Then we have the following a priori estimate with a constant $C>0$ independent of $\vareps$:
\begin{align*}
\sqrt{\vareps}& \|D(\Veps)\|_{L^2((0,T)\times \oef)} + \frac{1}{\sqrt{\vareps}}\|D(\Ueps)\|_{L^{\infty}((0,T),L^2(\oes))}
\\
\le & C\bigg(\sqrt{\vareps}\sum_{i=1}^{n-1} \left\{  \|\partial_t G_{\vareps}^i\|_{L^2((0,T)\times \oes)} +  \|G_{\vareps}^i\|_{L^{\infty}((0,T),L^2(\oes))} \right\} 
+ \frac{1}{\sqrt{\vareps}} \|\partial_t G_{\vareps}^n\|_{L^2((0,T)\times \oes)} 
\\
&+ \frac{1}{\sqrt{\vareps}} \|G_{\vareps}^n\|_{L^{\infty}((0,T),L^2(\oes))} 
+ \frac{1}{\sqrt{\vareps}}\sum_{i=1}^{n-1} \|F_{\vareps}^i \|_{L^2((0,T)\times \oef)} + \frac{1}{\vareps^{\frac32}} \|F_{\vareps}^n\|_{L^2((0,T)\times \oef)}\bigg)
\end{align*}
\end{lemma}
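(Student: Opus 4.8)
The plan is to test the abstract identity \eqref{eq:abstract_Micro_Model} with $\phieps$ defined by $\phieps = \Veps$ on $\oef$ and $\phieps = \partial_t\Ueps$ on $\oes$. This is an admissible test function in $H^1(\oeps,\partial_D\oeps)^n$ for almost every $t$, precisely by the interface condition $\partial_t\Ueps = \Veps$ on $\geps$ and the homogeneous Dirichlet conditions of $\Veps$ and $\Ueps$ on the lateral boundary; a routine measurability argument shows that the identity stays valid for this time-dependent test function almost everywhere in $(0,T)$. With this choice the elastic term becomes a time derivative, $\frac{1}{\vareps}\int_{\oes}AD(\Ueps):D(\partial_t\Ueps)\,dx = \frac{1}{2\vareps}\frac{d}{dt}\int_{\oes}AD(\Ueps):D(\Ueps)\,dx$, so that after integrating over $(0,t)$, using $\Ueps(0)=0$ and the coercivity of $A$,
\begin{align*}
\vareps\|D(\Veps)\|_{L^2((0,t)\times\oef)}^2 + \frac{c_0}{2\vareps}\|D(\Ueps)(t)\|_{L^2(\oes)}^2 &\le \int_0^t\!\!\int_{\oef}F_\vareps\cdot\Veps\,dx\,ds
\\
&\quad + \int_0^t\!\!\int_{\oes}G_\vareps\cdot\partial_t\Ueps\,dx\,ds .
\end{align*}

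For the last integral I would integrate by parts in time; since $G_\vareps(0)=0$ and $\Ueps(0)=0$ this produces $\int_{\oes}G_\vareps(t)\cdot\Ueps(t)\,dx - \int_0^t\int_{\oes}\partial_t G_\vareps\cdot\Ueps\,dx\,ds$, so that only $\Ueps$ itself, not its time derivative, must be estimated. The decisive tool is then the anisotropic Korn--Poincar\'e inequality for thin perforated domains from \cite{GahnJaegerTwoScaleTools} (collected in Appendix \ref{sec:inequalities}): for a field vanishing on the Dirichlet part of the lateral boundary the tangential components satisfy $\|\Ueps^i\|_{L^2(\oes)}\le C\|D(\Ueps)\|_{L^2(\oes)}$ with $C$ independent of $\vareps$, whereas the normal component only obeys $\|\Ueps^n\|_{L^2(\oes)}\le C\vareps^{-1}\|D(\Ueps)\|_{L^2(\oes)}$, and analogously for $\Veps$ on $\oef$. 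This asymmetry is exactly what forces the $n$-th components of $F_\vareps$ and $G_\vareps$ to carry the extra factor $\vareps^{-1}$ in the assertion.

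Splitting each of the two force integrals into its tangential part ($i=1,\dots,n-1$) and its $n$-th component, estimating $\Veps$ and $\Ueps$ componentwise by the Korn--Poincar\'e inequalities above, and applying Cauchy--Schwarz in space and time followed by Young's inequality with $\vareps$-weights chosen so that the terms $\theta\vareps\|D(\Veps)\|_{L^2((0,t)\times\oef)}^2$ and $\theta\vareps^{-1}\|D(\Ueps)\|_{L^\infty((0,t),L^2(\oes))}^2$ are absorbable on the left, one arrives at the stated inequality. Concretely, the time-integrated contributions involving $\Ueps$ are first bounded via $\sqrt T\,\|D(\Ueps)\|_{L^\infty((0,t),L^2(\oes))}$; then taking the supremum over $t\in(0,T')$ in the energy inequality, choosing $\theta$ small, and letting $T'=T$ yields the bound with a constant depending only on $T$, $c_0$ and the Korn--Poincar\'e constants. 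No Gronwall inequality is needed here, unlike in the Galerkin estimate sketched above.

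The energy identity and the integration by parts in time are entirely standard; the one genuinely delicate point --- and the step I expect to be the main obstacle --- is the bookkeeping of the powers of $\vareps$, which relies completely on having the sharp \emph{anisotropic} (tangential versus normal) Korn--Poincar\'e constants in the thin perforated geometry. Replacing them by a single component-independent bound would lose a power of $\vareps$ in several terms and break the scaling needed for the later passage to the limit $\vareps\to0$.
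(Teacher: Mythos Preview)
Your proposal is correct and follows essentially the same line as the paper's proof: test with $(\Veps,\partial_t\Ueps)$, integrate the $G_\vareps$-term by parts in time using $G_\vareps(0)=0$, and control $\Veps$ and $\Ueps$ componentwise via the anisotropic Korn inequality of Lemma~\ref{lem:app_Korn_thin}. The only cosmetic difference is that the paper closes the estimate with the Gronwall inequality for the term $\vareps^{-1}\|D(\Ueps)\|_{L^2((0,t)\times\oes)}^2$, whereas you bound it by $\sqrt{T}\,\vareps^{-1}\|D(\Ueps)\|_{L^\infty((0,t),L^2(\oes))}^2$ and absorb after taking the supremum; both are standard and yield the same result.
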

\begin{proof}
Choosing as a test-function in $\eqref{eq:abstract_Micro_Model}$ the function $(\Veps,\partial_t \Ueps)$, we obtain
\begin{align}\label{eq:aux_apriori_abstract_basic}
\vareps \|D(\Veps)\|^2_{L^2(\oef)} + \frac{1}{2\vareps}  \frac{d}{dt} \|\sqrt{A}D(\Ueps)\|^2_{L^2(\oes)} = \int_{\oef} F_{\vareps} \cdot \Veps dx + \int_{\oes} G_{\vareps} \cdot \partial_t \Ueps dx.
\end{align}
We estimate the right-hand side. For the first term we obtain with the Korn inequality from Lemma \ref{lem:app_Korn_thin} in the appendix almost everywhere in $(0,T)$
\begin{align*}
\int_{\oef} F_{\vareps} \cdot \Veps dx &\le \sum_{i=1}^{n-1} \|F_{\vareps}^i \|_{L^2(\oef)} \|\Veps\|_{L^2(\oef)} + \|F_{\vareps}^n \|_{L^2(\oef)} \|\Veps^n \|_{L^2(\oef)}
\\
&\le C \left(\sum_{i=1}^{n-1} \|F_{\vareps}^i \|_{L^2(\oef)} + \frac{1}{\vareps} \|F_{\vareps}^n \|_{L^2(\oef)} \right) \|D(\Veps)\|_{L^2(\oef)}
\\
&\le \frac{C}{\vareps} \left( \sum_{i=1}^{n-1} \|F_{\vareps}^i \|_{L^2(\oef)}^2 + \frac{1}{\vareps^2} \|F_{\vareps}^n\|^2_{L^2(\oef)}\right) + \frac{\vareps}{2}\|D(\Veps)\|^2_{L^2(\oef)}.
\end{align*}
For the term including $G_{\vareps}$ we  integrate with respect to time, use integration by parts and again the Korn inequality to get for almost every $t\in (0,T)$ and arbitrary $\theta>0$
\begin{align}
\begin{aligned}\label{eq:aux_apriori_abstract_Geps}
\int_0^t &\int_{\oes} G_{\vareps} \cdot \partial_t \Ueps dx dt = - \int_0^t \int_{\oes} \partial_t G_{\vareps} \cdot \Ueps dx dt + \int_{\oes} G_{\vareps}(t,x) \cdot \Ueps(t,x) dx
\\
\le& C\left(\sum_{i=1}^{n-1} \|\partial_t G_{\vareps}^i\|_{L^2((0,T)\times \oes)} + \frac{1}{\vareps} \|\partial_t G_{\vareps}^n\|_{L^2((0,T)\times \oes)} \right) \|D(\Ueps)\|_{L^2((0,t)\times \oes)}
\\
&+ C \left(\sum_{i=1}^{n-1} \| G_{\vareps}^i\|_{L^{\infty}((0,T),L^2( \oes))} + \frac{1}{\vareps} \| G_{\vareps}^n\|_{L^\infty((0,T),L^2( \oes))} \right) \|D(\Ueps)\|_{L^2( \oes)}
\\
\le & C\bigg(\sum_{i=1}^{n-1} \left\{ \vareps \|\partial_t G_{\vareps}^i\|^2_{L^2((0,T)\times \oes)} + \vareps \|G_{\vareps}^i\|_{L^{\infty}((0,T),L^2(\oes))}^2 \right\} 
+ \frac{1}{\vareps}\|\partial_t G_{\vareps}^n\|^2_{L^2((0,T)\times \oes)} 
\\
&+ \frac{1}{\vareps} \|G_{\vareps}^n\|_{L^{\infty}((0,T),L^2(\oes))}^2 \bigg)
+ \frac{1}{\vareps} \left(\|D(\Ueps)\|_{L^2((0,t)\times \oes)}^2 +  \theta \|D(\ueps)(t)\|_{L^2(\oes)}^2\right)
\end{aligned}
\end{align}
With the Gronwall inequality, the ellipticity of $A$, and $D(\Ueps)(0) = 0$ we obtain the desired result.
\end{proof}
 Later we will choose $\Veps = \partial^{\alpha}_t \veps$, $\Ueps = \partial_t^{\alpha} \ueps$, $F_{\vareps} = \partial_t^{\alpha} f_{\vareps}$, and $G_{\vareps} = \partial_t^{\alpha} g_{\vareps}$ with $\alpha = 0,1,2$. 
Next, we derive an estimate for $\Veps$ and $\Ueps$ under the assumption that we can control $\partial_t \Ueps$. For this we would like to consider the difference $\veps - \partial_t \ueps$, which vanishes on the interface $\geps$. Since $\partial_t \ueps$ is not defined in the fluid domain $\oef$ we have to extend it in a suitable way. We use the extension from Lemma \ref{lem:Extension_operator} and denote by $\tUeps$ the extension  to the whole layer $\oeps$. More precisely, we define $\tUeps(t,\cdot):= E_{\vareps}\Ueps(t,\cdot)$.

\begin{lemma}\label{lem:abstract_apriori_lem2}
Let the assumptions of Lemma \ref{lem:abstract_apriori_lem1} be valid. Additionally, we assume $F_{\vareps} \in H^1((0,T),L^2(\oef))^n\cap L^{\infty}((0,T),L^2(\oef))^n$ with $F_{\vareps}(0) = 0$. Then, we have the following estimate with a constant $C>0$ independent of $\vareps$:
\begin{align*}
&\sqrt{\vareps} \|D(\Veps)\|_{L^2((0,T)\times \oef)} + \frac{1}{\sqrt{\vareps}} \|D(\Ueps)\|_{L^{\infty}((0,T),L^2(\oes))}
\\
&\le C\bigg(\sum_{i=1}^{n-1} \left\{ \sqrt{\vareps} \|\partial_t G_{\vareps}^i\|_{L^2((0,T)\times \oes)} + \sqrt{\vareps} \|G_{\vareps}^i\|_{L^{\infty}((0,T),L^2(\oes))} \right\} 
\\
&+ \frac{1}{\sqrt{\vareps}}\|\partial_t G_{\vareps}^n\|_{L^2((0,T)\times \oes)} + \frac{1}{\sqrt{\vareps}} \|G_{\vareps}^n\|_{L^{\infty}((0,T),L^2(\oes))} \bigg)
\\
&+ \bigg(\sum_{i=1}^{n-1} \left\{ \sqrt{\vareps} \|\partial_t F_{\vareps}^i\|_{L^2((0,T)\times \oef)} + \sqrt{\vareps} \|F_{\vareps}^i\|_{L^{\infty}((0,T),L^2(\oef))} \right\} 
\\
&+ \frac{1}{\sqrt{\vareps}}\|\partial_t F_{\vareps}^n\|_{L^2((0,T)\times \oef)} + \frac{1}{\sqrt{\vareps}} \|F_{\vareps}^n\|_{L^{\infty}((0,T),L^2(\oef))} \bigg)
+ C \sqrt{\vareps} \|D(\partial_t \Ueps)\|_{L^2((0,T)\times \oes)}.
\end{align*}

\end{lemma}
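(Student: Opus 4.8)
The idea is to exploit the fact that the combination $\Veps - \partial_t \tUeps$ vanishes on the interface $\geps$ and hence is an admissible test function (after choosing its solid-part values to be $0$, so that globally it lies in $H^1(\oeps,\partial_D\oeps)^n$). So the plan is: first extend $\Ueps$ to $\tUeps := E_{\vareps}\Ueps$ on all of $\oeps$ via the extension operator of Lemma~\ref{lem:Extension_operator}, which controls $\|D(\tUeps)\|_{L^2(\oeps)}$ by $\|D(\Ueps)\|_{L^2(\oes)}$ with an $\vareps$-independent constant. Define the test function $\phi_{\vareps}$ equal to $\Veps - \partial_t\tUeps$ on $\oef$ and equal to $0$ on $\oes$; this is legitimate since $\Veps = \partial_t\Ueps = \partial_t\tUeps$ on $\geps$, so $\phi_{\vareps}\in H^1(\oeps,\partial_D\oeps)^n$. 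Plugging $\phi_{\vareps}$ into $\eqref{eq:abstract_Micro_Model}$ kills the solid integral $\frac{1}{\vareps}\int_{\oes}AD(\Ueps):D(\phi_{\vareps})$ entirely and leaves
\begin{align*}
\vareps\int_{\oef}D(\Veps):D(\Veps - \partial_t\tUeps)\,dx = \int_{\oef}F_{\vareps}\cdot(\Veps - \partial_t\tUeps)\,dx.
\end{align*}
Rearranging gives $\vareps\|D(\Veps)\|^2_{L^2(\oef)} = \vareps\int_{\oef}D(\Veps):D(\partial_t\tUeps)\,dx + \int_{\oef}F_{\vareps}\cdot(\Veps - \partial_t\tUeps)\,dx$, so by Young's inequality the first term absorbs into $\tfrac{\vareps}{2}\|D(\Veps)\|^2_{L^2(\oef)}$ at the cost of $C\vareps\|D(\partial_t\tUeps)\|^2_{L^2(\oef)}\le C\vareps\|D(\partial_t\Ueps)\|^2_{L^2(\oes)}$, which is precisely the last term in the claimed bound.

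The remaining work is to estimate $\int_{\oef}F_{\vareps}\cdot(\Veps-\partial_t\tUeps)\,dx$ (integrated in time). The term $\int_0^t\int_{\oef}F_{\vareps}\cdot\partial_t\tUeps$ is handled by integration by parts in time exactly as in Lemma~\ref{lem:abstract_apriori_lem1}: using $F_{\vareps}(0)=0$ one gets $-\int_0^t\int_{\oef}\partial_tF_{\vareps}\cdot\tUeps + \int_{\oef}F_{\vareps}(t)\cdot\tUeps(t)$, and then the thin-domain Korn inequality (Lemma~\ref{lem:app_Korn_thin}) converts $\|\tUeps\|_{L^2(\oef)}$ into $\vareps$-weighted norms of $\|D(\tUeps)\|_{L^2(\oef)}\le C\|D(\Ueps)\|_{L^2(\oes)}$, with the extra factor $\vareps^{-1}$ appearing only on the $n$-th component — this is where the asymmetric scaling of $F_{\vareps}^n$ versus $F_{\vareps}^i$ comes in and why the bound has $\sqrt{\vareps}\,\|\partial_tF_{\vareps}^i\|$ but $\vareps^{-1/2}\|\partial_tF_{\vareps}^n\|$. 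The term $\int_0^t\int_{\oef}F_{\vareps}\cdot\Veps$ is bounded directly by Korn and Young as in Lemma~\ref{lem:abstract_apriori_lem1}, but now one keeps it as $C(\sqrt\vareps\|F_{\vareps}^i\|_{L^\infty L^2} + \vareps^{-1/2}\|F_{\vareps}^n\|_{L^\infty L^2})\cdot(\text{something})$ — actually, since we have already spent the $\vareps^{-1}F^n$-type term once in Lemma~\ref{lem:abstract_apriori_lem1}, here we instead integrate $\int_0^t\int F_{\vareps}\cdot\Veps$ by parts in time as well (writing $\Veps = \partial_t\tUeps + (\Veps-\partial_t\tUeps)$ is not needed; rather one uses that $\int_0^t\int F\cdot\Veps$, after IBP in time using $\Veps=\partial_t\Ueps$ on $\geps$, is not straightforward since $\Veps$ is not a time derivative in $\oef$) — so more carefully, one simply keeps $\int_0^t\int_{\oef}F_{\vareps}\cdot\Veps\,dx\,dt \le \theta\vareps\|D(\Veps)\|^2_{L^2((0,t)\times\oef)} + C_\theta\vareps^{-1}(\sum_i\|F^i_{\vareps}\|^2_{L^2L^2} + \vareps^{-2}\|F^n_{\vareps}\|^2_{L^2L^2})$, which after absorption reproduces the $F$-dependent terms of Lemma~\ref{lem:abstract_apriori_lem1} — but the point of this lemma is to trade these for the $\partial_tF$ and $L^\infty$-in-time norms with better $\vareps$-powers via the time-integration-by-parts trick applied to the whole right-hand side once the $\Veps$-term is controlled through $\Veps-\partial_t\tUeps$.

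Finally, collecting: after absorbing the $\vareps\|D(\Veps)\|^2$ contributions, integrating in time, using $D(\Ueps)(0)=0$ and the Gronwall inequality (with the $\|D(\Ueps)\|^2_{L^2((0,t)\times\oes)}$ term playing the role of the Gronwall term as in Lemma~\ref{lem:abstract_apriori_lem1}), together with the a~priori control of $\partial_t\Ueps$ that is implicitly being fed in through the last term, and finally the ellipticity of $A$ to pass from $\|\sqrt{A}D(\Ueps)\|$ to $\|D(\Ueps)\|$, yields the stated estimate. The main obstacle I expect is bookkeeping the $\vareps$-powers correctly across the Korn inequality in the thin domain — in particular making sure the $n$-th-component terms pick up exactly one extra negative power of $\vareps$ and that the extension operator does not spoil this — rather than any conceptual difficulty; the structural heart is simply the choice of test function $\Veps - \partial_t\tUeps$, which is available precisely because of the interface condition $\partial_t\Ueps=\Veps$ on $\geps$.
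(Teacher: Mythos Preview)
Your choice of test function $(\Weps,0)$ with $\Weps:=\Veps-\partial_t\tUeps$ on $\oef$ and $0$ on $\oes$ is legitimate, but it kills precisely the term you need: the solid contribution $\frac{1}{\vareps}\int_{\oes}AD(\Ueps):D(\partial_t\Ueps)=\frac{1}{2\vareps}\frac{d}{dt}\|\sqrt{A}D(\Ueps)\|^2_{L^2(\oes)}$ disappears from the left-hand side. Consequently your identity gives information only about $\|D(\Veps)\|_{L^2((0,T)\times\oef)}$, and there is no mechanism to obtain the bound on $\frac{1}{\sqrt\vareps}\|D(\Ueps)\|_{L^\infty((0,T),L^2(\oes))}$ that the lemma also claims. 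The Gronwall step you invoke at the end, with $\|D(\Ueps)\|^2_{L^2((0,t)\times\oes)}$ as the Gronwall term, has no counterpart on the left to close against; and recovering $\|D(\Ueps)\|_{L^\infty}$ trivially from $\|D(\partial_t\Ueps)\|_{L^2}$ via $\Ueps(t)=\int_0^t\partial_t\Ueps$ costs an extra factor $\vareps^{-1}$ compared to what the statement allows.

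The paper's proof keeps the test function $(\Veps,\partial_t\Ueps)$ from Lemma~\ref{lem:abstract_apriori_lem1}, so that both $\vareps\|D(\Veps)\|^2$ and $\frac{1}{2\vareps}\frac{d}{dt}\|\sqrt{A}D(\Ueps)\|^2$ sit on the left. The splitting $\Veps=\Weps+\partial_t\tUeps$ is used \emph{only on the right-hand side}, to rewrite $\int_{\oef}F_\vareps\cdot\Veps=\int_{\oef}F_\vareps\cdot\Weps+\int_{\oef}F_\vareps\cdot\partial_t\tUeps$. The first piece is estimated \emph{directly} (not split back into $F_\vareps\cdot\Veps$ and $F_\vareps\cdot\partial_t\tUeps$ as you attempt): since $\Weps=0$ on $\geps$, the Poincar\'e and Korn inequalities of Lemma~\ref{lem:ineq_Poincare_Korn} give $\|\Weps\|_{L^2(\oef)}\le C\vareps\|D(\Weps)\|_{L^2(\oef)}\le C\vareps(\|D(\Veps)\|+\|D(\partial_t\Ueps)\|)$, which after Young produces exactly $C\vareps\|F_\vareps\|^2+\frac{\vareps}{2}\|D(\Veps)\|^2+C\vareps\|D(\partial_t\Ueps)\|^2$. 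The second piece $\int F_\vareps\cdot\partial_t\tUeps$ is handled by integration by parts in time as you describe, producing $\|D(\Ueps)\|$-terms that now \emph{can} be absorbed by Gronwall because the left-hand side still contains $\frac{1}{\vareps}\|D(\Ueps)(t)\|^2$. The $G_\vareps$ estimate is carried over unchanged from Lemma~\ref{lem:abstract_apriori_lem1}. In short: the decomposition $\Veps=\Weps+\partial_t\tUeps$ is the right idea, but it should be applied to the force term in the energy identity of Lemma~\ref{lem:abstract_apriori_lem1}, not promoted to a new test function.
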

\begin{proof}
We estimate the force term in the fluid domain in the right-hand side of $\eqref{eq:aux_apriori_abstract_basic}$ (again, in $\eqref{eq:abstract_Micro_Model}$ we choose the test-function $(\Veps,\partial_t \Ueps)$) in the following way (almost everywhere in $(0,T)$): With $\Weps:= \Veps - \partial_t \tUeps$ we have
\begin{align}
\begin{aligned}\label{eq:aux_apriori_FepsDtupes}
\int_{\oef} F_{\vareps} \cdot \Veps dx &= \int_{\oef} F_{\vareps} \cdot \Weps dx + \int_{\oef} F_{\vareps} \cdot \partial_t \tUeps dx.
\end{aligned}
\end{align}
For the first term we notice $\Weps = 0$ on $\geps$, and therefore we can use the Poincar\'e and the Korn inequality from Lemma \ref{lem:ineq_Poincare_Korn} to obtain
\begin{align}
\begin{aligned}\label{eq:aux_Weps_Korn_Poincare}
\|\Weps\|_{L^2(\oef)}&\le C\vareps \|\nabla \Weps\|_{L^2(\oef)} \le C\vareps \|D(\Weps)\|_{L^2(\oef)}
\\
 &\le C\vareps \left( \|D(\Veps)\|_{L^2(\oef)} + \|D(\partial_t \Ueps)\|_{L^2(\oes)}\right),
\end{aligned}
\end{align}
and therefore for all $t\in (0,T)$
\begin{align*}
\int_0^t &\int_{\oef} F_{\vareps} \cdot \Weps dx dt 
\\
&\le C \vareps \|F_{\vareps}\|_{L^2((0,T)\times \oef)}^2 + \frac{\vareps}{2} \|D(\Veps)\|^2_{L^2((0,t)\times \oef)} + C \vareps \|D(\partial_t \Ueps)\|_{L^2((0,t)\times \oes)}^2.
\end{align*}
For the second term on the right-hand side in $\eqref{eq:aux_apriori_FepsDtupes}$ we use the same arguments as in $\eqref{eq:aux_apriori_abstract_Geps}$ together with the estimates for the extension operator from Lemma \ref{lem:Extension_operator} to obtain for $t\in (0,T)$ and arbitrary $\theta>0$
\begin{align*}
\int_0^t&\int_{\oef} F_{\vareps}\cdot \partial_t \tUeps dx  dt = -\int_0^t \int_{\oef} \partial_t F_{\vareps} \cdot \tUeps dx dt + \int_{\oef} F_{\vareps}(t,x) \cdot \tUeps (t,x) dx 
\\
\le & C\left(\sum_{i=1}^{n-1} \|\partial_t F_{\vareps}^i\|_{L^2((0,T)\times \oef)} + \frac{1}{\vareps} \|\partial_t F_{\vareps}^n\|_{L^2((0,T)\times \oef)} \right) \|D(\tUeps)\|_{L^2((0,t)\times \oef)}
\\
&+ C \left(\sum_{i=1}^{n-1} \| F_{\vareps}^i\|_{L^{\infty}((0,T),L^2( \oef))} + \frac{1}{\vareps} \| F_{\vareps}^n\|_{L^\infty((0,T),L^2( \oef))} \right) \|D(\tUeps)\|_{L^2( \oef)}
\\
\le & C\bigg(\sum_{i=1}^{n-1} \left\{ \vareps \|\partial_t F_{\vareps}^i\|^2_{L^2((0,T)\times \oef)} + \vareps \|F_{\vareps}^i\|_{L^{\infty}((0,T),L^2(\oef))}^2 \right\} 
+ \frac{1}{\vareps}\|\partial_t F_{\vareps}^n\|^2_{L^2((0,T)\times \oef)} 
\\
&+\frac{1}{\vareps} \|F_{\vareps}^n\|_{L^{\infty}((0,T),L^2(\oef))}^2 \bigg)
+ \frac{1}{\vareps} \left(\|D(\Ueps)\|_{L^2((0,t)\times \oes)}^2 +  \theta \|D(\Ueps)(t)\|_{L^2(\oes)}^2\right).
\end{align*}
Altogether, we obtain with $\eqref{eq:aux_apriori_abstract_basic}$ after integration with respect to time from $0$ to $t \in (0,T)$ and $\eqref{eq:aux_apriori_abstract_Geps}$,  choosing $\theta$ small enough,  the positivity of $A$, and the Gronwall inequality
\begin{align*}
\vareps \|&D(\Veps)\|^2_{L^2((0,T)\times \oef)} + \frac{1}{\vareps} \|D(\Ueps)\|^2_{L^{\infty}((0,T),L^2(\oes))}
\\
\le& C\bigg(\sum_{i=1}^{n-1} \left\{ \vareps \|\partial_t G_{\vareps}^i\|^2_{L^2((0,T)\times \oes)} + \vareps \|G_{\vareps}^i\|_{L^{\infty}((0,T),L^2(\oes))}^2 \right\} 
\\
&+ \frac{1}{\vareps}\|\partial_t G_{\vareps}^n\|^2_{L^2((0,T)\times \oes)} + \frac{1}{\vareps} \|G_{\vareps}^n\|_{L^{\infty}((0,T),L^2(\oes))}^2 \bigg)
\\
&+ \left(\sum_{i=1}^{n-1} \bigg\{ \vareps \|\partial_t F_{\vareps}^i\|^2_{L^2((0,T)\times \oef)} + \vareps \|F_{\vareps}^i\|_{L^{\infty}((0,T),L^2(\oef))}^2 \right\} 
\\
&+ \frac{1}{\vareps} \|\partial_t F_{\vareps}^n\|^2_{L^2((0,T)\times \oef)} + \frac{1}{\vareps} \|F_{\vareps}^n\|_{L^{\infty}((0,T),L^2(\oef))}^2 \bigg)
+ C \vareps \|D(\partial_t \Ueps)\|_{L^2((0,t)\times \oes)}^2.
\end{align*}
\end{proof}
In contrast to Lemma \ref{lem:abstract_apriori_lem1} we obtain that the terms including $F_{\vareps}$ are of one order better. The price to pay is that the time derivative $\partial_t \Ueps$ occurs. However, we will see that for $\partial_t^{\alpha} \veps$ and $\partial_t^{\alpha} \ueps$ with $\alpha = 0,1$, we can improve in this way the $\vareps$-uniform bound compared to the second time derivatives $\partial_{tt} \veps$ and $\partial_{tt} \ueps$. 

In the following lemma we give the a priori estimates for $\veps$ and $\ueps$. For the displacement we consider the extension to the whole layer given by Lemma \ref{lem:Extension_operator} from the appendix. We define for almost every $t \in (0,T)$
\begin{align}\label{def:tildeUeps}
\tueps(t,\cdot_x):= E_{\vareps}\ueps(t,\cdot_x).
\end{align}
\begin{lemma}\label{lem:apriori_velo_displac}
The functions $\veps$ and $\weps$ in the weak solution $(\veps,\peps,\ueps)$ of the microscopic problem \ref{MicroModel} fulfill
\begin{align*}
\frac{1}{\sqrt{\vareps}}\|D( \veps)\|_{H^1((0,T),L^2( \oef))} + \frac{1}{\vareps^{\frac32}} \|D( \ueps)\|_{W^{1,\infty}((0,T),L^2(\oes))} \le C.
\end{align*}
Further, for $\weps:= \veps - \partial_t \tueps \in H^1(\oef,\geps)^n$ it holds that
\begin{align*}
\frac{1}{\vareps^{\frac32}} \|\weps \|_{L^2((0,T)\times \oef)} + \frac{1}{\sqrt{\vareps}} \|\nabla \weps\|_{L^2((0,T)\times \oef)} + \frac{1}{\vareps^{\frac32}} \|\nabla \cdot \weps \|_{L^2((0,T)\times \oef)} \le C.
\end{align*}
In particular, it holds for $i=1,\ldots,n-1$ that
\begin{align*}
\|\veps^i\|_{H^1((0,T),L^2(\oef))} \le C\vareps^{\frac32}.
\end{align*}
\end{lemma}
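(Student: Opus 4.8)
The plan is to derive everything from the two abstract estimates, Lemma~\ref{lem:abstract_apriori_lem1} and Lemma~\ref{lem:abstract_apriori_lem2}, by applying them \emph{in a cascade} to the time-differentiated microscopic problem, and then to transfer the resulting bounds to $\weps$ and to the tangential components of $\veps$ using the thin-domain Poincaré and Korn inequalities together with the extension operator. More precisely, by the regularity of $(\veps,\peps,\ueps)$ from Theorem~\ref{thm:Existence} and of $f_{\vareps},g_{\vareps}$ from \ref{ass:f_eps},~\ref{ass:g_eps} one may differentiate the weak formulation \eqref{eq:micro_model} in time; restricting to test functions that are divergence-free in $\oef$ removes the pressure term, so that, using $f_{\vareps}(0)=g_{\vareps}(0)=0$, $\ueps(0)=0$ and $\veps(0)=\peps(0)=0$, the pair $(\partial_t^{\alpha}\veps,\partial_t^{\alpha}\ueps)$ solves the abstract problem \eqref{eq:abstract_Micro_Model} with data $(\partial_t^{\alpha}f_{\vareps},\partial_t^{\alpha}g_{\vareps})$ for $\alpha=0,1,2$; the initial values $D(\partial_t^{\alpha}\ueps)(0)$ are either zero or, via the static equation at $t=0$, of size $O(\sqrt{\vareps})$ in $L^2(\oes)$, which is harmless in the Gronwall step underlying Lemmas~\ref{lem:abstract_apriori_lem1}--\ref{lem:abstract_apriori_lem2}.

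Now the cascade. Apply Lemma~\ref{lem:abstract_apriori_lem1} with $\alpha=2$: by \ref{ass:f_eps},~\ref{ass:g_eps} the norms $\|\partial_t^2 f_{\vareps}\|_{L^2}$, $\|\partial_t^3 g_{\vareps}\|_{L^2}$, $\|\partial_t^2 g_{\vareps}\|_{L^\infty((0,T),L^2)}$ are $\le C\sqrt{\vareps}$ while the $n$-th components carry an extra factor $\vareps$, so the weights $\vareps^{-1/2},\vareps^{-3/2}$ in that lemma are exactly absorbed and its right-hand side is $O(1)$; hence $\sqrt{\vareps}\|D(\partial_t^2\veps)\|_{L^2((0,T)\times\oef)}+\vareps^{-1/2}\|D(\partial_t^2\ueps)\|_{L^\infty((0,T),L^2(\oes))}\le C$, in particular $\|D(\partial_t^2\ueps)\|_{L^2((0,T)\times\oes)}\le C\sqrt{\vareps}$. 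Insert this into Lemma~\ref{lem:abstract_apriori_lem2} with $\alpha=1$: the force terms now involve $\partial_t^2 f_{\vareps},\partial_t^2 g_{\vareps}$ and, with the weights of that lemma, are $\le C\vareps$, and the additional term $C\sqrt{\vareps}\|D(\partial_t^2\ueps)\|_{L^2}$ is likewise $\le C\vareps$; therefore $\vareps^{-1/2}\|D(\partial_t\veps)\|_{L^2((0,T)\times\oef)}+\vareps^{-3/2}\|D(\partial_t\ueps)\|_{L^\infty((0,T),L^2(\oes))}\le C$, so $\|D(\partial_t\ueps)\|_{L^2((0,T)\times\oes)}\le C\vareps^{3/2}$. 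Insert that, in turn, into Lemma~\ref{lem:abstract_apriori_lem2} with $\alpha=0$ (force terms $\le C\vareps$, extra term $\le C\vareps^2$) to get $\vareps^{-1/2}\|D(\veps)\|_{L^2((0,T)\times\oef)}+\vareps^{-3/2}\|D(\ueps)\|_{L^\infty((0,T),L^2(\oes))}\le C$. Together with the $\alpha=1$ estimate this is the first assertion.

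For $\weps=\veps-\partial_t\tueps$ one has $\weps=0$ on $\geps$, since $\veps=\partial_t\ueps$ there and $\partial_t\tueps$ coincides with $\partial_t\ueps$ on $\oes$, hence on $\geps$; moreover $\weps\in H^1(\oef,\geps)^n$ by Theorem~\ref{thm:Existence} and Lemma~\ref{lem:Extension_operator}. The Poincaré and Korn inequalities for thin perforated domains (Lemma~\ref{lem:ineq_Poincare_Korn}) and the symmetric-gradient bound for the extension (Lemma~\ref{lem:Extension_operator}) give
\[
\|\weps\|_{L^2(\oef)}\le C\vareps\|\nabla\weps\|_{L^2(\oef)}\le C\vareps\|D(\weps)\|_{L^2(\oef)}\le C\vareps\big(\|D(\veps)\|_{L^2(\oef)}+\|D(\partial_t\ueps)\|_{L^2(\oes)}\big),
\]
and since the two norms on the right are $\le C\sqrt{\vareps}$ (in $L^2$ over time) by the previous step, we obtain $\vareps^{-1/2}\|\nabla\weps\|_{L^2}\le C$ and $\vareps^{-3/2}\|\weps\|_{L^2}\le C$; the divergence bound follows from $\nabla\cdot\weps=-\nabla\cdot\partial_t\tueps$ (as $\nabla\cdot\veps=0$), the pointwise inequality $|\nabla\cdot\psi|\le\sqrt{n}\,|D(\psi)|$, Lemma~\ref{lem:Extension_operator}, and $\|D(\partial_t\ueps)\|_{L^2(\oes)}\le C\vareps^{3/2}$. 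Finally, writing $\veps^i=\weps^i+(\partial_t\tueps)^i$ for $i\le n-1$, the first summand and its time derivative (which again vanishes on $\geps$) are controlled by the $\weps$-estimates, and the second by the extension operator together with the refined Korn inequality for thin perforated layers of \cite{GahnJaegerTwoScaleTools} (Lemma~\ref{lem:app_Korn_thin}), which bounds the tangential components of a thin-layer displacement by its symmetric gradient without the $\vareps^{-1}$-loss; this yields $\|\veps^i\|_{H^1((0,T),L^2(\oef))}\le C\vareps^{3/2}$.

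The main obstacle is the $\vareps$-power bookkeeping in the cascade: Lemma~\ref{lem:abstract_apriori_lem1} at $\alpha=2$ only produces a non-sharp bound on $\partial_t^2\ueps$, yet it suffices because, carried (with its $\sqrt{\vareps}$ prefactor) into Lemma~\ref{lem:abstract_apriori_lem2} at $\alpha=1$, it still yields the sharp $\vareps$-gain for the first time derivative, which then propagates to $\alpha=0$. Making this work requires keeping the Korn and Poincaré constants $\vareps$-dependent throughout, using an extension operator whose bound on the symmetric gradient is uniform in $\vareps$, and matching the anisotropic $\vareps$-scaling of $f_{\vareps},g_{\vareps}$ (smaller $n$-th components) to the anisotropic weights built into the two abstract lemmas.
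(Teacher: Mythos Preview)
Your proposal is correct and follows essentially the same route as the paper. The only cosmetic difference is the order of applications: the paper first applies Lemma~\ref{lem:abstract_apriori_lem1} for all $\alpha=0,1,2$ to obtain the coarse bound $\sqrt{\vareps}\,\|D(\partial_t^{\alpha}\veps)\|_{L^2}+\vareps^{-1/2}\|D(\partial_t^{\alpha}\ueps)\|_{L^\infty}\le C$, and then feeds this (for $\alpha=\beta+1$) into Lemma~\ref{lem:abstract_apriori_lem2} for $\beta=0,1$ in parallel; you instead run a genuine cascade, using the already improved $\alpha=1$ bound when treating $\alpha=0$. Since the force terms in Lemma~\ref{lem:abstract_apriori_lem2} are already of order $\vareps$, both schemes yield the same final estimates. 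Your treatment of $\weps$ and of the tangential components $\veps^i$ via the splitting $\veps^i=\weps^i+(\partial_t\tueps)^i$, together with Lemma~\ref{lem:ineq_Poincare_Korn}, Lemma~\ref{lem:app_Korn_thin} and Lemma~\ref{lem:Extension_operator}, coincides with the paper's argument; you are in fact a bit more explicit than the paper about the $H^1$-in-time part of $\|\veps^i\|_{H^1((0,T),L^2)}$ and about the status of the initial values $D(\partial_t^{\alpha}\ueps)(0)$.
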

\begin{proof}
We first notice that the abstract variational equation $\eqref{eq:abstract_Micro_Model}$ is exactly the equation for $\veps$ and $\ueps$ from the definition of the weak microscopic solution with $F_{\vareps} = f_{\vareps}$ and $G_{\vareps} = g_{\vareps}$. Differentiating the microscopic problem $\eqref{MicroModel}$ with respect to time, taking into account the initial condition $\ueps(0)=0$ and the compatibility condition $f_{\vareps} (0) = 0$ and $g_{\vareps}(0) = 0$ (see assumptions \ref{ass:f_eps} and \ref{ass:g_eps}), the associated weak variational equation for $\partial_t^{\alpha} \veps $ and $\partial_t^{\alpha} \ueps$ with $\alpha =1,2$ are $\eqref{eq:abstract_Micro_Model}$ with $F_{\vareps} = \partial_t^{\alpha} f_{\vareps} $ and $G_{\vareps} = \partial_t^{\alpha} g_{\vareps}$ (and $\Veps = \partial_t^{\alpha} \veps$ and $\Ueps = \partial_t^{\alpha} \ueps$). 
Hence, from Lemma \ref{lem:abstract_apriori_lem1} (here the $H^3$-regularity with respect to time for $\ueps$ is necessary) and the assumptions on $f_{\vareps} $ and $g_{\vareps}$, we  get for $\alpha = 0,1,2$:
\begin{align}\label{eq:aux_apriori_final}
\sqrt{\vareps}\|D(\partial_t^{\alpha} \veps)\|_{L^2((0,T)\times \oef)} + \frac{1}{\sqrt{\vareps}} \|D(\partial_t^{\alpha} \ueps)\|_{L^{\infty}((0,T),L^2(\oes))} \le C.
\end{align}
We emphasize that the right-hand side is of order $1$ and not $\vareps$, due to norms of $f_{\vareps}$. Now, we use Lemma \ref{lem:abstract_apriori_lem2} to obtain for $\beta = 0,1$
\begin{align*}
\sqrt{\vareps}\|D(\partial_t^{\beta} \veps)\|_{L^2((0,T)\times \oef)} + \frac{1}{\sqrt{\vareps}} \|&D(\partial_t^{\beta} \ueps)\|_{L^{\infty}((0,T),L^2(\oes))} 
\\
&\le C\vareps + C\sqrt{\vareps}\|D(\partial_t^{\beta + 1} \ueps)\|_{L^2((0,T)\times \oes)}
\le C\vareps,
\end{align*}
where in the second inequality we used $\eqref{eq:aux_apriori_final}$. 

Next, we estimate the norms of $\weps$ and $\veps^i$. We use the same arguments as in $\eqref{eq:aux_Weps_Korn_Poincare}$ in the proof of Lemma \ref{lem:abstract_apriori_lem2}, in particular it holds that $\weps = 0$ on $\geps$. From the Poincar\'e and Korn inequality from Lemma \ref{lem:ineq_Poincare_Korn} in the appendix we get
\begin{align*}
\|\weps\|_{L^2((0,T)\times \oef)} \le C \vareps \|\nabla \weps \|_{L^2((0,T)\times \oef)} \le  C \vareps \|D(\weps)\|_{L^2((0,T)\times \oef)} \le C \vareps^{\frac{3}{2}},
\end{align*}
where at the end we used the a priori estimates for $\veps$ and $\partial_t \ueps$ obtained above together with the properties of the extension operator from Lemma \ref{lem:Extension_operator}. To estimate $\veps^i$ with $i=1,\ldots,n-1$ we use the Korn inequality in Lemma \ref{lem:app_Korn_thin}, the extension operator from Lemma \ref{lem:Extension_operator}, and the bound for $D(\partial_t \ueps)$ obtained above to get
\begin{align*}
\|\veps^i\|_{L^2((0,T)\times \oef)} &\le \|\weps^i\|_{L^2((0,T)\times \oef)} + \|\partial_t \tueps^i\|_{L^2((0,T)\times \oef)}
\\
&\le C\vareps^{\frac32} + C \left( \|\partial_t \ueps^i\|_{L^2((0,T)\times \oes)}  + \varepsilon \|\nabla \ueps \|_{L^2((0,T)\times \oes)}\right)
\\
&\le C \varepsilon^{\frac32} + \|D(\ueps)\|_{L^2((0,T)\times \oes)} \le C\vareps^{\frac32}.
\end{align*} 
It remains to estimate the norm of $\nabla \cdot \weps$. Since $\veps $ is divergence free, we have
\begin{align*}
\nabla \cdot \weps = -\nabla \cdot \partial_t \tueps = - \mathrm{tr}(D(\partial_t \tueps)).
\end{align*}
From the estimate of $D(\partial_t \ueps)$ and again Lemma \ref{lem:Extension_operator} we obtain the desired result.
\end{proof}
It remains to estimate the fluid pressure $\peps$, where we use the Bogovskii-operator from Corollary \ref{cor:Bogovskii} in the appendix.
\begin{lemma}\label{lem:apriori_pressure}
The pressure $\peps$ in the weak microscopic problem $\eqref{MicroModel}$ fulfills
\begin{align*}
\|\peps\|_{H^1((0,T),L^2(\oef))} \le C\sqrt{\vareps},
\end{align*}
for a constant $C>0$ independent of $\vareps$.
\end{lemma}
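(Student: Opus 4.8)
The plan is a standard duality argument: I would test the weak microscopic equation $\eqref{eq:micro_model}$ with a velocity field whose divergence in $\oef$ recovers $\peps$. Concretely, for a.e.\ $t\in(0,T)$ I set $\phieps(t,\cdot):=\mathcal{B}_{\vareps}\big(\peps(t,\cdot)\big)$, the image of $\peps(t,\cdot)\in L^2(\oef)$ under the Bogovskii-operator for the thin perforated layer from Corollary \ref{cor:Bogovskii}, extended by zero onto $\oes$. Then $\nabla\cdot\phieps=\peps$ in $\oef$, and since $\mathcal{B}_{\vareps}$ produces fields vanishing on the perforation boundary $\geps$ and on the lateral Dirichlet part $\partial_D\oef$, the zero-extension has matching (zero) traces across $\geps$ and hence lies in $H^1(\oeps,\partial_D\oeps)^n$ while being identically zero on $\oes$. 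No mean-value constraint on $\peps$ is needed here, because the Neumann-type condition $\eqref{MicroModel_PressureBC}$ on $\partial_N\oef$ leaves the test space unconstrained on that part of the boundary. The one quantitative input is the $\vareps$-explicit operator bound $\|\phieps\|_{L^2(\oef)}+\vareps\,\|\nabla\phieps\|_{L^2(\oef)}\le C\|\peps\|_{L^2(\oef)}$, which is precisely what Corollary \ref{cor:Bogovskii} (proved in Appendix \ref{sec:Bogovskii}) supplies.

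Inserting this $\phieps$ into $\eqref{eq:micro_model}$, the two integrals over $\oes$ drop out and, using $\nabla\cdot\phieps=\peps$, I obtain for a.e.\ $t$
\begin{align*}
\|\peps(t)\|_{L^2(\oef)}^2=\vareps\int_{\oef}D(\veps):D(\phieps)\,dx-\int_{\oef}f_{\vareps}\cdot\phieps\,dx.
\end{align*}
Cauchy--Schwarz together with the Bogovskii bound turns the right-hand side into $C\big(\|D(\veps)(t)\|_{L^2(\oef)}+\|f_{\vareps}(t)\|_{L^2(\oef)}\big)\|\peps(t)\|_{L^2(\oef)}$, so after dividing,
\begin{align*}
\|\peps(t)\|_{L^2(\oef)}\le C\big(\|D(\veps)(t)\|_{L^2(\oef)}+\|f_{\vareps}(t)\|_{L^2(\oef)}\big).
\end{align*}
Squaring and integrating in time, the bound $\|D(\veps)\|_{L^2((0,T)\times\oef)}\le C\sqrt{\vareps}$ from Lemma \ref{lem:apriori_velo_displac} and $\|f_{\vareps}\|_{L^2((0,T)\times\oef)}\le C\sqrt{\vareps}$ from \ref{ass:f_eps} give $\|\peps\|_{L^2((0,T)\times\oef)}\le C\sqrt{\vareps}$.

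For the time derivative I would differentiate $\eqref{eq:micro_model}$ in $t$ --- legitimate thanks to the regularity $\veps\in H^3((0,T),H^1(\oef))^n$, $\peps\in H^3((0,T),L^2(\oef))$, $\ueps\in W^{3,\infty}((0,T),H^1(\oes))^n$ from Theorem \ref{thm:Existence} and the regularity of the data --- which yields the same variational identity with $(\partial_t\veps,\partial_t\peps,\partial_t\ueps,\partial_t f_{\vareps},\partial_t g_{\vareps})$ in place of $(\veps,\peps,\ueps,f_{\vareps},g_{\vareps})$. Testing now with $\mathcal{B}_{\vareps}(\partial_t\peps)$ extended by zero and repeating the estimate above gives $\|\partial_t\peps(t)\|_{L^2(\oef)}\le C\big(\|D(\partial_t\veps)(t)\|_{L^2(\oef)}+\|\partial_t f_{\vareps}(t)\|_{L^2(\oef)}\big)$; integrating in time and using $\|D(\partial_t\veps)\|_{L^2((0,T)\times\oef)}\le C\sqrt{\vareps}$ (again Lemma \ref{lem:apriori_velo_displac}) and $\|\partial_t f_{\vareps}\|_{L^2((0,T)\times\oef)}\le C\sqrt{\vareps}$ (again \ref{ass:f_eps}) yields $\|\partial_t\peps\|_{L^2((0,T)\times\oef)}\le C\sqrt{\vareps}$. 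Combined with the previous step this is the claimed bound.

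The only genuinely delicate point is hidden in Corollary \ref{cor:Bogovskii}: one must build a right inverse of the divergence on the \emph{thin} perforated domain $\oef$ and control how its norm blows up in $\vareps$ --- the factor $\vareps^{-1}$ on $\|\nabla\phieps\|_{L^2(\oef)}$ is exactly what keeps the term $\vareps\int D(\veps):D(\phieps)$ of size $\sqrt{\vareps}\|\peps\|_{L^2(\oef)}$ and not larger. Once that operator and its $\vareps$-explicit bound are in hand, everything else is the routine duality computation above, fed with the a priori estimates of Lemma \ref{lem:apriori_velo_displac}.
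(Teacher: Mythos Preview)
Your proof is correct and follows essentially the same route as the paper: construct a Bogovskii inverse for $\peps$ via Corollary \ref{cor:Bogovskii}, extend by zero, plug into $\eqref{eq:micro_model}$, and feed in the bounds from Lemma \ref{lem:apriori_velo_displac} and \ref{ass:f_eps}; then differentiate in time and repeat. The only cosmetic difference is that you invoke the $L^2$ bound on $\phieps$ directly from Corollary \ref{cor:Bogovskii}, whereas the paper routes it through the Poincar\'e inequality in Lemma \ref{lem:ineq_Poincare_Korn}---both yield the same estimate.
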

\begin{proof}
From Corollary \ref{cor:Bogovskii} we obtain the existence of $\phieps \in H^1(\oef, \geps \cup \partial_D \oef)^n$ with $\nabla \cdot \phieps = -\peps$ (almost everywhere in $(0,T)$)  and
\begin{align*}
\vareps \|\nabla \phieps\|_{L^2(\oef)} \le C \|\peps\|_{L^2(\oef)}.
\end{align*}
We extend $\phieps$ by zero to the whole layer $\oeps$ and plug in this function as a test-function in $\eqref{eq:micro_model}$ to obtain (with the Poincar\'e inequality form Lemma \ref{lem:ineq_Poincare_Korn} in the appendix)
\begin{align*}
\|\peps\|_{L^2(\oef)}^2  &= -\vareps \int_{\oef} D(\veps) : D(\phieps)dx + \int_{\oef}f_{\vareps} \cdot \phieps dx
\\
&\le \vareps \|D(\veps)\|_{L^2(\oef)} \|D(\phieps)\|_{L^2(\oef)} + \|f_{\vareps}\|_{L^2(\oef)} \|\phieps\|_{L^2(\oeps)}
\\
&\le C \left(\|D(\veps)\|_{L^2(\oef)} + \sqrt{\vareps}\right)\|\peps\|_{L^2(\oef)}.
\end{align*}
Integration with respect to time and using the a priori estimate for $D(\veps)$ from Lemma \ref{lem:apriori_velo_displac} gives the estimate for $\peps$ in $L^2((0,T)\times \oef)$. To obtain the estimate for $\partial_t \peps$ we formally differentiate $\eqref{eq:micro_model}$ with respect to time (this can be made rigorous in the Galerkin-method in Section \ref{sec:Existence}) and argue in the same way as above.
\end{proof}

\section{Compactness microscopic solutions}
\label{sec:compactness_results}

The aim of this section is the derivation of compactness results for the microscopic solution $(\veps,\peps,\ueps)$. As the underlying topology we use the method of two-scale convergence, see Appendix \ref{sec:two_scale_convergence} for a brief introduction and summary of the main two-scale compactness results related to our setting. We emphasize that the convergence results in this section are purely based on the a priori estimates in the previous Section \ref{sec:Apriori_Estimates} and basic properties (boundary conditions and $\nabla\cdot \veps = 0$). We do not use explicitly that $(\veps,\peps,\ueps)$ is in fact a weak microscopic solution of $\eqref{MicroModel}$.

In the following result we use the extension $\tueps$ defined in $\eqref{def:tildeUeps}$. We emphasize that
 Lemma \ref{lem:Extension_operator} immediately implies that $\tueps$ fulfills the same a priori estimates as $\ueps$ in Lemma \ref{lem:apriori_velo_displac}. 


\begin{proposition}\label{prop:conv_displacement}
Let $\ueps$ be the displacement from the micro-solution $(\veps,\peps,\ueps)$ of  the microscopic model $\eqref{MicroModel}$ with extension $\tueps$.
Then there exist limit functions $u_0^n \in H^1((0,T),H_0^2(\Sigma))\cap H^1((0,T),L^2(\Sigma))$ and $\tilde{u}_1 \in H^1((0,T),H^1_0(\Sigma))^n$ with $\tilde{u}_1^n = 0$, and $u_2 \in H^1((0,T), L^2( \Sigma, H_{\#}^1(Z)/\R))^n$, such that up to a subsequence (for $\alpha = 1,\ldots,n-1$ and $\beta = 0,1$)
\begin{align*}
\frac{\partial_t^{\beta}\tueps^{\alpha}}{\vareps} &\rats \partial_t^{\beta}\tilde{u}_1^{\alpha} - y_n \partial_t^{\beta}\partial_{\alpha} u_0^n,
\\
 \partial_t^{\beta}\tueps^n &\rats  \partial_t^{\beta}u_0^n,
\\
\foe  D(\partial_t^{\beta}\tueps) &\rats  D_{\x}(\partial_t^{\beta}\tilde{u}_1) - y_n \partial_t^{\beta}\nabla_{\x}^2 u_0^n + \partial_t^{\beta}D_y(u_2).
\end{align*}
The limit functions $\widetilde{u}_1$ and $u_0^n$ fulfill the initial conditions $\widetilde{u}_1(0) = 0$ and $u_0^n(0) = 0$.
Further, it holds up to a subsequence that
\begin{align*}
\partial_t^{\beta}\ueps\vert_{\geps} \rats \partial_t^{\beta}u_0^n e_n \qquad\mbox{ on } \geps.
\end{align*}
\end{proposition}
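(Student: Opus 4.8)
The plan is to extract two-scale limits from the a priori estimates of Lemma \ref{lem:apriori_velo_displac} and then pin down their structure. From $\|D(\ueps)\|_{W^{1,\infty}((0,T),L^2(\oes))} \le C\vareps^{3/2}$ together with the Korn inequality for thin perforated domains (Lemma \ref{lem:app_Korn_thin}) one gets $\|\ueps^\alpha\|_{L^2(\oes)} \le C\vareps^{3/2}$ for $\alpha = 1,\dots,n-1$ and $\|\ueps^n\|_{L^2(\oes)}\le C\vareps^{1/2}$, with the analogous bounds for $\partial_t\ueps$; by Lemma \ref{lem:Extension_operator} the extension $\tueps$ inherits all these estimates on $\oeps$. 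Hence the rescaled quantities $\vareps^{-1}\partial_t^\beta\tueps^\alpha$, $\partial_t^\beta\tueps^n$, and $\vareps^{-1}D(\partial_t^\beta\tueps)$ are bounded in the norm for which the thin-domain two-scale convergence (Appendix \ref{sec:two_scale_convergence}) provides sequential compactness, so along a subsequence they two-scale converge to limits in $L^2((0,T)\times\Sigma\times Z)$. The first step is therefore just to invoke these compactness theorems and name the limits $\widehat u^\alpha$, $u_0^n$, $\widehat{D}$.

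The second and main step is to identify the Kirchhoff--Love structure of the limit. This is the heart of the argument. One shows, exactly as in \cite{gahn2022derivation} or \cite{griso2020homogenization}, that because $\vareps^{-1}D(\partial_t^\beta\tueps)$ is bounded in $L^2$, the limit of $\partial_t^\beta\tueps^n$ cannot depend on $y$ at all (its scaled $y$-gradient would otherwise blow up), giving $\partial_t^\beta\tueps^n \rats \partial_t^\beta u_0^n$ with $u_0^n$ independent of $y$; moreover the $y_n$-derivative structure forces $\partial_t^\beta u_0^n$ to be independent of $y_n$ too, and the component relations $D(\tueps)_{in}\to 0$ in the appropriate scaled sense yield $\partial_n(\text{limit of }\vareps^{-1}\tueps^\alpha) = -\partial_\alpha u_0^n$, i.e. the limit of $\vareps^{-1}\tueps^\alpha$ is affine in $y_n$ with slope $-\partial_\alpha u_0^n$. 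Writing the $y_n$-independent part as $\widetilde u_1^\alpha$ gives $\vareps^{-1}\partial_t^\beta\tueps^\alpha \rats \partial_t^\beta\widetilde u_1^\alpha - y_n\,\partial_t^\beta\partial_\alpha u_0^n$. Testing against $y$-periodic test functions and using the thin-domain two-scale structure theorem then produces the third limit relation with the corrector $u_2 \in H^1((0,T),L^2(\Sigma,H^1_\#(Z)/\R))^n$ appearing as $\partial_t^\beta D_y(u_2)$. The regularity $u_0^n\in H^1((0,T),H^2_0(\Sigma))$, $\widetilde u_1 \in H^1((0,T),H^1_0(\Sigma))^n$ follows from the uniform $L^\infty((0,T))$-in-time bounds combined with lower semicontinuity, the Dirichlet conditions on $\partial_D\oes = \partial\oes\cap(\partial\Sigma\times(-\vareps,\vareps))$ passing to the limit to give the zero boundary values of $\widetilde u_1$, $u_0^n$, and $\nabla_{\x}u_0^n$; that $\widetilde u_1^n = 0$ is forced because the $n$-component carries the better scaling already absorbed into $u_0^n$. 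The initial conditions $\widetilde u_1(0)=0$, $u_0^n(0)=0$ descend from $\ueps(0)=0$ by continuity in time of the (weakly convergent) rescaled sequences.

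The third step is the trace convergence on $\geps$. Since $S^\pm\cap\partial Z_f = \emptyset$, the interface $\geps$ stays away from the top and bottom, and on the reference cell $\Gamma = \partial Z_s\setminus\partial Z$; a two-scale trace lemma for thin perforated domains (of the type in Appendix \ref{sec:two_scale_convergence} or \cite{GahnJaegerTwoScaleTools}) together with the $H^1(\oes)$-bound on $\vareps^{-1}\partial_t^\beta\ueps$ lets one pass to the limit in $\partial_t^\beta\ueps|_{\geps}$. The $\alpha$-components scale like $\vareps$ and their traces vanish in the limit, while the $n$-component converges to $\partial_t^\beta u_0^n e_n$ since $u_0^n$ is constant in $y$; this is where one must be careful that the trace of the limit equals the limit of the traces, which is exactly what the thin two-scale trace result provides.

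I expect the main obstacle to be the rigorous derivation of the affine-in-$y_n$ form of the limit of $\vareps^{-1}\tueps^\alpha$ and the simultaneous extraction of the corrector $u_2$ — in other words, correctly handling the interplay between the oscillation variable $y'\in Y$ (periodic) and $y_n\in(-1,1)$ (not periodic), so that the scaled symmetric gradient limit has precisely the claimed three-term decomposition $D_{\x}(\widetilde u_1) - y_n\nabla_{\x}^2 u_0^n + D_y(u_2)$. The bookkeeping of which $\vareps$-power each component carries, and ensuring the extension operator does not destroy the symmetric-gradient control, is the delicate part; everything else is a fairly standard application of the thin-domain two-scale machinery to the uniform bounds already in hand.
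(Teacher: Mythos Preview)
Your proposal is correct and follows the same underlying route as the paper, but you are working much harder than necessary: the paper's proof consists entirely of invoking Lemma~\ref{lem:app_TS_Plate} (applied to $\tueps$ and $\partial_t\tueps$, using the bound from Lemma~\ref{lem:apriori_velo_displac} and the extension estimates from Lemma~\ref{lem:Extension_operator}) together with a reference to \cite[Proposition~5.2]{gahn2022derivation} for the surface convergence on $\geps$. In particular, the ``main obstacle'' you identify---the affine-in-$y_n$ structure and the three-term decomposition $D_{\x}(\widetilde u_1) - y_n\nabla_{\x}^2 u_0^n + D_y(u_2)$---is precisely the content of Lemma~\ref{lem:app_TS_Plate}, which is stated in the appendix as a black-box result from \cite{GahnJaegerTwoScaleTools}; you do not need to re-derive it.
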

\begin{proof}
This result is an immediate consequence of Lemma \ref{lem:app_TS_Plate} from the appendix. We also refer to \cite[Proposition 5.2]{gahn2022derivation} for more details, in particular for the two-scale convergence on the surface.
\end{proof}


From the a priori estimates in Section \ref{sec:Apriori_Estimates} we see that the bounds for the fluid velocity $\veps$ differ from the bounds of $\ueps$ resp. $\partial_t \ueps$, leading to a different behavior in the limit. We also have seen in Lemma \ref{lem:apriori_velo_displac} that we get better estimates with respect to $\varepsilon$ for the function $\weps= \veps - \partial_t \tueps$, so it is also suitable to work with this function.
\begin{proposition}\label{prop:conv_fluid}
Let $\veps$ and $\peps$ be the fluid velocity and the fluid pressure from the microscopic solution $(\veps,\peps,\ueps)$ of $\eqref{MicroModel}$, and $\weps:= \veps - \partial_t \tueps$. With the limit functions obtained in Proposition \ref{prop:conv_displacement}, we obtain the following result. There exist $w_1 \in L^2((0,T)\times \Sigma , H_{\#}^1(Z_f,\Gamma))^n$ with $\nabla_y \cdot w_1 = 0$ and $p_0 \in L^2((0,T)\times \Sigma \times Z_f)$, such that up to a subsequence for $i=1,\ldots,n-1$
\begin{align*}
\chi_{\oef} \frac{\veps^i}{\vareps} &\rats \chi_{Z_f} \left[w_1^i + \partial_t \widetilde{u}_1^i - y_n \partial_t \partial_i u_0^n \right],
\\
\chi_{\oef}\veps^n &\rats \chi_{Z_f} \partial_t u_0^n,
\\
\chi_{\oef}D(\veps) &\rats \chi_{Z_f} D_y(w_1),
\end{align*}
\end{proposition}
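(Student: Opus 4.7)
The plan is to combine the sharp a priori estimates from Lemmas~\ref{lem:apriori_velo_displac} and \ref{lem:apriori_pressure} with the two-scale compactness machinery for thin perforated domains from Appendix~\ref{sec:two_scale_convergence}, and to piggyback on the displacement convergences already proved in Proposition~\ref{prop:conv_displacement} in order to match terms. The first step is to extract the two-scale limit of $\weps/\vareps$. Lemma~\ref{lem:apriori_velo_displac} gives
\begin{align*}
\vareps^{-\frac{3}{2}}\|\weps\|_{L^2((0,T)\times\oef)} + \vareps^{-\frac{1}{2}}\|\nabla\weps\|_{L^2((0,T)\times\oef)} \le C,
\end{align*}
and by construction $\weps = \veps - \partial_t\tueps$ vanishes on $\geps$, since $\veps = \partial_t\ueps$ there and $\tueps$ coincides with $\ueps$ on $\oes$. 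These are exactly the hypotheses for the thin-layer two-scale compactness with Dirichlet condition on $\Gamma$, so up to a subsequence one obtains $w_1 \in L^2((0,T)\times\Sigma, H^1_{\#}(Z_f,\Gamma))^n$ with $\chi_{\oef}\weps/\vareps \rats \chi_{Z_f} w_1$ and $\chi_{\oef}\nabla\weps \rats \chi_{Z_f}\nabla_y w_1$.

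The stated limits for $\veps$ follow by splitting $\veps = \weps + \partial_t\tueps$. For the symmetric gradient, $D(\veps) = D(\weps) + D(\partial_t\tueps)$; the first summand two-scale converges to $\chi_{Z_f}D_y(w_1)$ by the previous step, while the second equals $\vareps$ times the two-scale bounded quantity $\vareps^{-1}D(\partial_t\tueps)$ from Proposition~\ref{prop:conv_displacement}, so its two-scale limit vanishes. For tangential components $i=1,\ldots,n-1$, writing $\veps^i/\vareps = \weps^i/\vareps + \partial_t\tueps^i/\vareps$ and invoking Proposition~\ref{prop:conv_displacement} yields the claim. For the normal component the bound $\|\weps^n\|_{L^2} \le C\vareps^{3/2}$ gives $\chi_{\oef}\weps^n \rats 0$ without rescaling, so $\chi_{\oef}\veps^n \rats \chi_{Z_f}\partial_t u_0^n$ follows from $\partial_t\tueps^n \rats \partial_t u_0^n$. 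Incompressibility $\nabla_y\cdot w_1 = 0$ is obtained from the estimate $\|\nabla\cdot\weps\|_{L^2((0,T)\times\oef)} \le C\vareps^{3/2}$, which forces $\chi_{\oef}\nabla\cdot\weps \rats 0$, combined with $\chi_{\oef}\nabla\weps \rats \chi_{Z_f}\nabla_y w_1$. Finally, Lemma~\ref{lem:apriori_pressure} gives $\|\peps\|_{L^2((0,T)\times\oef)} \le C\sqrt{\vareps}$, which is the correct thin-layer scaling, and standard two-scale compactness yields $\chi_{\oef}\peps \rats \chi_{Z_f}p_0$ with $p_0 \in L^2((0,T)\times\Sigma\times Z_f)$.

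The main obstacle I anticipate is correctly invoking the thin-layer compactness for $\weps$: one must pass from the rescaled bounds $\|\weps\|/\vareps^{3/2} \le C$ and $\|\nabla\weps\|/\vareps^{1/2} \le C$ to a limit $w_1$ having $H^1_{\#}(Z_f,\Gamma)$-regularity together with the simultaneous gradient identification $\chi_{\oef}\nabla\weps \rats \chi_{Z_f}\nabla_y w_1$, so that both the Dirichlet trace on $\Gamma$ and the correct fast-variable dependence are transferred to the limit. Once this is secured, the rest of the proof reduces to bookkeeping using the displacement convergences of Proposition~\ref{prop:conv_displacement}.
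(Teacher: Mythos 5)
Your proof is correct and follows essentially the same route as the paper: extract $w_1$ from the two-scale compactness for $\weps/\vareps$ in perforated thin layers (with the trace convergence giving $w_1\in H^1_{\#}(Z_f,\Gamma)$), then split $\veps=\weps+\partial_t\tueps$ and use Proposition~\ref{prop:conv_displacement} to identify each limit, with $D(\partial_t\tueps)\rats 0$ killing the displacement contribution in $D(\veps)$. Your explicit derivation of $\nabla_y\cdot w_1=0$ from the $O(\vareps^{3/2})$ bound on $\nabla\cdot\weps$ is a detail the paper leaves implicit, but it is the intended argument.
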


\begin{proof}
From the a priori estimates in Lemma \ref{lem:apriori_velo_displac} and the two-scale compactness results from Lemma \ref{lem:app_ts_comp_general} in the appendix we obtain the existence of $v_0 \in L^2((0,T)\times \Sigma \times Z_f)^n$ with $v_0^i=0$ for $i=1,\ldots,n-1$, $w_1 \in L^2((0,T)\times \Sigma, H_{\#}^1(Z_f,\Gamma))^n$ (see Lemma \ref{TwoScaleCompactnessPerforated} for the zero boundary value), and $v_1 \in L^2((0,T)\times \Sigma \times Z_f)^{n-1}$ such that up to a subsequence for $i=1,\ldots,n-1$
\begin{align*}
\chi_{\oef} \frac{\weps}{\vareps} &\rats \chi_{Z_f} w_1, \quad 
\chi_{\oef} \vareps \nabla \frac{ \weps}{\vareps} \rats \chi_{Z_f} \nabla_y w_1,
\\
\chi_{\oef} \veps &\rats \chi_{Z_f} v_0, \quad 
\chi_{\oef} \frac{\veps^i}{\vareps} \rats \chi_{Z_f} v_1^i.
\end{align*}
With the convergence results for $\partial_t \tueps$ from Proposition \ref{prop:conv_displacement} we immediately obtain  
\begin{align*}
\chi_{\oef}D(\veps) = \chi_{\oef}\left[\vareps D\left(\frac{\weps}{\vareps}\right) + D(\partial_t \tueps)\right] \rats \chi_{Z_f} D_y(w_1)
\end{align*}
In a similar way we obtain for $i=1,\ldots,n-1$
\begin{align*}
v_1^i = w_1^i + \partial_t \widetilde{u}_1^i - y_n \partial_t \partial_i u_0^n \qquad\mbox{in } (0,T)\times \Sigma \times Z_f.
\end{align*}
Obviously, we have $v_0^i=0 $ for $i=1,\ldots,n-1$. The identity $v_0^n = \partial_t u_0^n$ can be obtained in the same way as in \cite[Proposition 5.3]{gahn2022derivation}. However, here we give a much simpler argument using the function $\weps$. It holds that 
\begin{align*}
\chi_{\oef}\veps^n =  \chi_{\oef}\left(\weps^n + \partial_t \tueps^n \right) \rats \chi_{Z_f} \partial_t u_0^n.
\end{align*}
\end{proof}

\begin{remark}\label{rem:v1_w1}
From the proof above we have for $i=1,\ldots,n-1$ that $\chi_{\oef} \vareps^{-1} \veps^i \rats \chi_{Z_f} v_1^i$. If we define $v_1^n:= w_1^n$ we have 
\begin{align*}
v_1 = w_1 + \partial_t \widetilde{u}_1 - y_n \partial_t \nabla_{\x}u_0^n.
\end{align*}
Since $ \veps \rats \chi_{Z_f} \partial_t u_0^n e_n$ the zeroth order corrector $v_0$ in $\eqref{TwoScaleAnsatz}$ is given by $v_0 = \partial_t u_0^n e_n$. Hence, the two-scale convergence of $\chi_{\oef} \vareps^{-1} \veps^i$ of the first $n-1$ components implies that $v_1^i$ corresponds to the first order corrector $v_1$ in $\eqref{TwoScaleAnsatz}$. However, our convergence results obtained above do not guarantee that for the $n$-th component $w_n$. Nevertheless, this has no influence on the macroscopic model.
\end{remark}
Next, we consider the two-scale limit of $\vareps^{-1} \nabla \cdot \weps$. In particular, we obtain in the limit an equation for the divergence of the mean value $\w_1$ from which we will later obtain the generalized Darcy law.
\begin{proposition}\label{prop:conv_divergence}
Let $\weps = \veps - \partial_t \tueps$ and $(u_0^n,\widetilde{u}_1, u_2)$ the limit functions from Proposition \ref{prop:conv_displacement}. Up to a subsequence it holds that
\begin{align*}
\chi_{\oef} \nabla \cdot \frac{\weps}{\vareps} &\rats \chi_{Z_f} \left[-\nabla_{\x} \cdot \partial_t \widetilde{u}_1 + y_n \Delta_{\x} \partial_t u_0^n -\nabla_y \cdot \partial_t u_2\right].
\end{align*}
In particular we obtain for $\w_1:= \int_{Z_f} w_1 dy$ that  $\nabla_{\x} \cdot \w_1 \in L^2((0,T)\times \Sigma)$ with
\begin{align*}
\nabla_{\x} \cdot \w_1 &= -|Z_f| \nabla_{\x} \cdot \partial_t \widetilde{u}_1 + d_n^f \Delta_{\x} \partial_t u_0^n + \int_{\Gamma} \partial_t u_2 \cdot \nu d\sigma
\\
&= -|Z_f| \nabla_{\x} \cdot \partial_t \widetilde{u}_1 + d_n^f \Delta_{\x} \partial_t u_0^n + \int_{Z_s} \nabla_y \cdot \partial_t u_2 dy -  \int_{S} \partial_t u_2 \cdot \nu d\sigma,
\\
\w_1 \cdot \nu &= 0  \quad\mbox{ on } (0,T)\times \partial_D \Sigma,
\end{align*}
for $d_n^f:= \int_{Z_f} y_n dy$.
See $\eqref{def:normal_trace_zero_part_boundary}$ for the definition of the zero normal trace on a part of the boundary.
\end{proposition}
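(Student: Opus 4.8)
The starting point is the identity $\nabla\cdot\weps = -\mathrm{tr}\,(D(\partial_t\tueps))$, valid since $\nabla\cdot\veps = 0$ and $\weps = \veps-\partial_t\tueps$, together with the bound $\|\nabla\cdot\weps\|_{L^2((0,T)\times\oef)}\le C\vareps^{3/2}$ from Lemma~\ref{lem:apriori_velo_displac}, which is consistent with the $\vareps^{-1}$-scaling of $D(\partial_t\tueps)$. To identify the limit I would invoke the third convergence of Proposition~\ref{prop:conv_displacement} with $\beta=1$, namely $\vareps^{-1}D(\partial_t\tueps)\rats D_{\x}(\partial_t\widetilde{u}_1)-y_n\partial_t\nabla_{\x}^2u_0^n+\partial_t D_y(u_2)$, multiply by the periodic cutoff $\chi_{\oef}$ (compatible with two-scale limits, exactly as in the proof of Proposition~\ref{prop:conv_fluid}), and take the trace of the limit, using $\mathrm{tr}\,D_{\x}(\partial_t\widetilde{u}_1)=\nabla_{\x}\cdot\partial_t\widetilde{u}_1$ (recall $\widetilde{u}_1^n=0$), $\mathrm{tr}\,(y_n\partial_t\nabla_{\x}^2u_0^n)=y_n\Delta_{\x}\partial_t u_0^n$, and $\mathrm{tr}\,\partial_t D_y(u_2)=\nabla_y\cdot\partial_t u_2$. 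Taking into account the overall minus sign, this is the first asserted convergence; denote the limit by $g=\chi_{Z_f}[-\nabla_{\x}\cdot\partial_t\widetilde{u}_1+y_n\Delta_{\x}\partial_t u_0^n-\nabla_y\cdot\partial_t u_2]$.

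For $\w_1=\int_{Z_f}w_1\,dy\in L^2((0,T)\times\Sigma)^n$ I would test the two convergences $\chi_{\oef}\vareps^{-1}\nabla\cdot\weps\rats g$ and $\chi_{\oef}\vareps^{-1}\weps\rats\chi_{Z_f}w_1$ (Proposition~\ref{prop:conv_fluid}) against a $y$-independent admissible function $\varphi\in C_c^\infty((0,T)\times\Sigma)$. On the microscopic side, integration by parts in $x$ gives $\int_0^T\!\int_{\oef}\vareps^{-1}(\nabla\cdot\weps)\varphi\,dx\,dt=-\int_0^T\!\int_{\oef}\vareps^{-1}\weps\cdot\nabla_{\x}\varphi\,dx\,dt$ with no boundary contribution, since $\oef$ does not meet $\sepm$ (because $S^\pm\cap\partial Z_f=\emptyset$), $\varphi$ is supported away from the lateral boundary, and $\weps=0$ on $\geps$. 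Passing to the limit on both sides and equating yields $\int_0^T\!\int_\Sigma\big(\int_{Z_f}g\,dy\big)\varphi\,d\x\,dt=-\int_0^T\!\int_\Sigma\w_1\cdot\nabla_{\x}\varphi\,d\x\,dt$, that is, $\nabla_{\x}\cdot\w_1=\int_{Z_f}g\,dy$ in $\mathcal{D}'((0,T)\times\Sigma)$, so in particular $\nabla_{\x}\cdot\w_1\in L^2$. It then remains to evaluate $\int_{Z_f}g\,dy$: the $y$-affine terms integrate to $-|Z_f|\nabla_{\x}\cdot\partial_t\widetilde{u}_1+d_n^f\Delta_{\x}\partial_t u_0^n$, while the divergence theorem on $Z_f$, the $Y$-periodicity of $u_2$ (cancelling the lateral faces of $Z_f$), and $\nu_f=-\nu$ on $\Gamma$ rewrite $-\int_{Z_f}\nabla_y\cdot\partial_t u_2\,dy=\int_\Gamma\partial_t u_2\cdot\nu\,d\sigma$; this is the first displayed formula. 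The second follows by applying the divergence theorem on $Z_s$ instead, using $\partial Z_s=\Gamma\cup S\cup(\text{lateral})$ and again discarding the lateral faces by periodicity.

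For the boundary condition $\w_1\cdot\nu=0$ on $(0,T)\times\partial_D\Sigma$ I would repeat the test with $\varphi$ now smooth on $\overline{\Sigma}$ (times a cutoff in $t$) and vanishing only in a neighbourhood of $\partial_N\Sigma$. The integration by parts now produces an extra boundary term over $\partial_D\oef$, which vanishes because there $\weps=\veps-\partial_t\tueps=0$: indeed $\veps=0$ on $\partial_D\oef$, and the extension $\tueps=E_\vareps\ueps$ inherits the zero lateral Dirichlet condition of $\ueps$ by Lemma~\ref{lem:Extension_operator}. Passing to the limit and using $\nabla_{\x}\cdot\w_1\in L^2$ then yields $\int_\Sigma\w_1\cdot\nabla_{\x}\varphi\,d\x+\int_\Sigma(\nabla_{\x}\cdot\w_1)\varphi\,d\x=0$ for a.e.\ $t$ and all such $\varphi$, which is exactly the weak vanishing of the normal trace on $\partial_D\Sigma$ in the sense of $\eqref{def:normal_trace_zero_part_boundary}$.

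The main obstacle is the careful handling of the boundary terms in the integration by parts, in particular checking that the extension operator of Lemma~\ref{lem:Extension_operator} preserves zero lateral boundary values so that $\weps$ vanishes on $\partial_D\oef$, together with the routine but technical point that multiplication of a two-scale convergent sequence by $\chi_{\oef}$ respects two-scale limits up to the standard corrections in the boundary cells. Once these are settled, the remaining steps are bookkeeping with traces and the divergence theorem on $Z_f$ and $Z_s$.
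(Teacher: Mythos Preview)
Your argument for the two-scale convergence of $\chi_{\oef}\vareps^{-1}\nabla\cdot\weps$ and for the formula for $\nabla_{\x}\cdot\w_1$ follows the paper exactly: write $\nabla\cdot\weps=-\mathrm{tr}\,D(\partial_t\tueps)$, use Proposition~\ref{prop:conv_displacement} with $\beta=1$, and then rewrite $\int_{Z_f}\nabla_y\cdot\partial_t u_2\,dy$ via the divergence theorem on $Z_f$ and $Z_s$. The only structural difference is that you first test with $\varphi\in C_c^\infty((0,T)\times\Sigma)$ to identify $\nabla_{\x}\cdot\w_1$ and then enlarge the test class, whereas the paper works directly with $\phi\in C_0^\infty((0,T)\times\overline{\Sigma}\setminus\partial_N\Sigma)$ in one step.

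There is, however, a genuine gap in your treatment of the normal-trace condition on $\partial_D\Sigma$. You assert that $\weps=0$ on $\partial_D\oef$ because $\veps=0$ and ``the extension $\tueps=E_\vareps\ueps$ inherits the zero lateral Dirichlet condition of $\ueps$ by Lemma~\ref{lem:Extension_operator}.'' But Lemma~\ref{lem:Extension_operator} only provides norm estimates; it says nothing about boundary values, and in general a cell-wise extension does \emph{not} vanish on the lateral fluid boundary $\partial_D\oef$. The paper confirms this: there one has $\weps=-\partial_t\tueps$ (not zero) on $\partial_D\oef$, and the resulting boundary term
\[
\frac{1}{\vareps^2}\int_0^T\!\int_{\partial_D\oef}\partial_t\tueps\cdot\nu\,\phi\,d\sigma_x\,dt
\]
is controlled via the trace inequality of Lemma~\ref{lem:app_trace_lateral_BC}, giving the bound $C\vareps^{-1}\|D(\partial_t\tueps)\|_{L^2((0,T)\times\oeps)}\le C\sqrt{\vareps}\to 0$. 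So the boundary term is not absent; it is small. Replace your unjustified vanishing claim by this estimate and the argument goes through; the density of $C_0^\infty(\overline{\Sigma}\setminus\partial_N\Sigma)$ in $H^1(\Sigma,\partial_N\Sigma)$ then gives the zero-normal-trace condition in the sense of \eqref{def:normal_trace_zero_part_boundary}.
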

\begin{proof}
$\nabla_y \cdot w_1 = 0$ was already shown in Proposition \ref{prop:conv_fluid}. Next, we use the convergence results for $D(\partial_t \tueps)$ from Proposition \ref{prop:conv_displacement} and $\nabla \cdot \veps = 0$ to obtain
\begin{align*}
\chi_{\oef}\nabla \cdot \frac{\weps}{\vareps} = - \frac{1}{\vareps}\chi_{\oef} \mathrm{tr} D(\partial_t \tueps) \rats - \chi_{Z_f} \mathrm{tr}\left[ D_{\x} (\partial_t \widetilde{u}_1) - y_n \nabla_{\x}^2 \partial_t u_0^n + D_y(\partial_t u_2)\right]
\end{align*}
which implies the desired convergence result. It remains to show the equation for the divergence of the mean of $w_1$. From the convergence results obtained above we get for all $\phi \in C_0^{\infty}((0,T)\times \overline{\Sigma} \setminus \partial_N \Sigma)$ (using $\weps = -\partial_t \tueps$ on $\partial_D \oef$)
\begin{align*}
\int_0^T \int_{\Sigma} \int_{Z_f} & w_1 \cdot \nabla_{\x} \phi dy d\x dt = \lim_{\vareps \to 0} \foe \int_0^T\int_{\oef} \frac{\weps}{\vareps} \cdot \nabla_{\x} \phi dx dt 
\\
&=- \lim_{\vareps \to 0}\left[ \foe \int_0^T \int_{\oef} \nabla \cdot \frac{\weps}{\vareps} \phi dx dt + \frac{1}{\vareps^2} \int_0^T \int_{\partial_D\oef} \partial_t \tueps \cdot \nu \phi d\sigma_x dt\right].
\end{align*}
We emphasize that on $\partial_N \oef$ the function $\phi$ vanishes and therefore we only have an integral over $\partial_D \oef$ in the integration by parts above. For the boundary term we have with Lemma \ref{lem:app_trace_lateral_BC}, Lemma \ref{lem:Extension_operator}, and the a priori estimates from Lemma \ref{lem:apriori_velo_displac}
\begin{align*}
\left| \frac{1}{\vareps^2} \int_0^T \int_{\partial_D \oef} \partial_t \tueps \cdot \nu \phi d\sigma_x dt \right| &\le \frac{1}{\vareps^2} \|\partial_t \tueps\|_{L^2((0,T)\times \partial_D \oef)}\|\phi\|_{L^2((0,T)\times \partial_D \oef)}
\\
&\le \frac{C}{\vareps} \|D(\partial_t \tueps)\|_{L^2((0,T)\times \oeps)} \le C\sqrt{\vareps}.
\end{align*}
Hence, together with the two-scale convergence for $\nabla \cdot \frac{\weps}{\vareps}$ obtained above we get
\begin{align*}
\int_{\Sigma} \int_{Z_f} w_1 \cdot \nabla_{\x} \phi dy d\x  = \int_{\Sigma} \int_{Z_f} \left[ \nabla_{\x} \cdot \partial_t \widetilde{u}_1  -y_n \Delta_{\x} \partial_t u_0^n + \nabla_y \cdot \partial_t u_2\right] \phi dy d\x 
\end{align*}
almost everywhere in $(0,T)$.
We consider the term including $\partial_t u_2$ (we can neglect the time derivative which commutes with the integral):
\begin{align*}
\int_{Z_f} \nabla_y \cdot u_2 dy &= \int_{\Gamma} u_2 \cdot \nu_f d\sigma = -\int_{\partial Z_s} u_2 \cdot \nu_s d\sigma +  \int_{S} u_2 \cdot \nu_s d\sigma
\\
&= -\int_{Z_s} \nabla_y \cdot u_2 dy + \int_{S} u_2 \cdot \nu_s d\sigma.
\end{align*}
This implies the desired result for the divergence of the mean value of $w_1$. We emphasize that the zero normal trace condition (see $\eqref{def:normal_trace_zero_part_boundary}$ for the definition) follows, since $C_0^{\infty}(\overline{\Sigma}\setminus \partial_N \Sigma)$ is dense in $H^1(\Sigma,\partial_N \Sigma)$, see \cite{Bernard2011}. 
\end{proof}
%

\begin{proposition}\label{prop:conv_pressure}
Let $\peps$ be the fluid pressure from the micro-solution $(\veps,\peps,\ueps)$ of $\eqref{MicroModel}$. Then, there exists $p_0 \in H^1((0,T),L^2( \Sigma \times Z_f))$ such that up to a subsequence
\begin{align*}
\chi_{\oef}\peps \rats  \chi_{Z_f} p_0,\qquad \chi_{\oef} \partial_t \peps \rats \chi_{Z_f} \partial_t p_0.
\end{align*}
Further, $p_0$ fulfills the initial condition $p_0(0) = 0$.
\end{proposition}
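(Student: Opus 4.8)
The plan is to combine the a priori estimate for $\peps$ from Lemma \ref{lem:apriori_pressure} with the two-scale compactness machinery of Appendix \ref{sec:two_scale_convergence}, exactly as was done for the fluid velocity in Proposition \ref{prop:conv_fluid}. First I would recall that Lemma \ref{lem:apriori_pressure} gives $\|\peps\|_{H^1((0,T),L^2(\oef))} \le C\sqrt{\vareps}$; dividing by the natural volume scaling $\sqrt{\vareps}$ of the thin layer, the rescaled pressure $\chi_{\oef}\peps$ (and its time derivative $\chi_{\oef}\partial_t\peps$) is bounded uniformly in the appropriate two-scale norm. Applying the two-scale compactness result for $L^2$-bounded sequences in thin perforated domains (the analogue of Lemma \ref{lem:app_ts_comp_general} restricted to no gradient control) yields, up to a subsequence, a two-scale limit $p_0 \in L^2((0,T)\times\Sigma\times Z_f)$ with $\chi_{\oef}\peps \rats \chi_{Z_f}p_0$, and likewise a limit $\widetilde{p} \in L^2((0,T)\times\Sigma\times Z_f)$ with $\chi_{\oef}\partial_t\peps \rats \chi_{Z_f}\widetilde{p}$.

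The next step is to identify $\widetilde{p} = \partial_t p_0$ and to upgrade the regularity to $p_0 \in H^1((0,T),L^2(\Sigma\times Z_f))$. This is a standard commutation argument: for any test function of the form $\psi(t)\varphi(\x)\Phi(y)$ with $\psi \in C_0^\infty((0,T))$ and $\varphi,\Phi$ smooth (periodic in $y$), one integrates by parts in $t$ in the two-scale pairing $\int_0^T\int_{\oef}\partial_t\peps(t,x)\,\psi(t)\varphi(\x)\Phi(x/\vareps)\,dx\,dt$ and passes to the limit on both sides, which shows that the distributional time derivative of $p_0$ equals $\widetilde{p}$; since $\widetilde{p} \in L^2$, this gives $p_0 \in H^1((0,T),L^2(\Sigma\times Z_f))$ and hence the claimed two-scale convergence $\chi_{\oef}\partial_t\peps \rats \chi_{Z_f}\partial_t p_0$.

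It remains to establish the initial condition $p_0(0)=0$. Here I would use the time regularity: since $p_0 \in H^1((0,T),L^2(\Sigma\times Z_f))$, the trace $p_0(0)$ is well-defined in $L^2(\Sigma\times Z_f)$, and similarly $\peps \in H^1((0,T),L^2(\oef))$ from Theorem \ref{thm:Existence} gives that $\chi_{\oef}\peps$ has a well-defined value at $t=0$ in the two-scale sense. By Theorem \ref{thm:Existence} we have $\peps(0)=0$ in $\oef$, so $\chi_{\oef}\peps(0) = 0$; one then shows that the two-scale limit of $\peps(0)$ is $\chi_{Z_f}p_0(0)$ (using that evaluation at $t=0$ commutes with two-scale limits for sequences bounded in $H^1$ in time, via the embedding $H^1((0,T),X) \hookrightarrow C^0([0,T],X)$ together with a bound on $\peps(0)$ and an argument analogous to the one used for the surface traces in Proposition \ref{prop:conv_displacement}), whence $p_0(0)=0$.

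I expect the main obstacle to be the careful justification of passing to the limit in the initial condition: one must make sure the two-scale convergence is compatible with time evaluation at the endpoint $t=0$, which requires either a uniform-in-$\vareps$ bound on $\chi_{\oef}\peps$ in a space continuously embedded into $C^0$ of the two-scale target (provided by the $H^1$-in-time a priori bound) or an integration-by-parts argument testing with functions $\psi \in C^\infty([0,T])$ with $\psi(T)=0$, $\psi(0)\neq 0$, so that the boundary term at $t=0$ survives and forces $p_0(0)=0$; the second route is cleaner and I would adopt it. The two-scale compactness and the commutation argument for $\partial_t$ are by contrast entirely routine given the tools already set up in the paper.
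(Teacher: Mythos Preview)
Your proposal is correct and follows exactly the same route as the paper's own proof, which is a two-line sketch: apply Lemma~\ref{lem:apriori_pressure} together with the basic two-scale compactness Lemma~\ref{lem:app_ts_comp_general}(i) to get the limits, and invoke $\peps(0)=0$ for the initial condition. You have simply filled in the details the paper leaves implicit (the commutation argument $\widetilde p=\partial_t p_0$ and the passage to the limit at $t=0$ via test functions not vanishing there), and those details are all handled correctly.
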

\begin{proof}
The existence of $p_0$ and the two-scale convergence of $\peps$ is a direct consequence of  Lemma \ref{lem:apriori_pressure} and \ref{lem:app_ts_comp_general}. The initial condition $p_0(0) = 0$ follows from $\peps(0) = 0$.
\end{proof}

\section{Derivation of the Biot-Plate-Model}
\label{sec:derivation_macro_model}

In this section we derive the macroscopic model $\eqref{Macro_Model}$.
The model consists of two problems: A generalized Darcy-law $\eqref{Macro_Model:Darcy_law}$ and a plate equation $\eqref{eq:Macro_Model_elastic}$ and $\eqref{Macro_Model:plate_4th_order}$ together with suitable boundary and initial conditions. Of course, both problems are strongly coupled. In a first step we give a characterization of the corrector $u_2$ from Proposition \ref{prop:conv_displacement} in terms of $\widetilde{u}_1$ and $u_0^n$ together with several cell solutions. We also formulate here associated cell problems and define homogenized coefficients needed later for the formulation of the plate equation. Based on the representation of $u_2$ and the convergence results in the previous section we are able to derive the Darcy-law. Finally, we show that $(\widetilde{u}_1,u_0^n)$ solves the plate equation.

We start with the definition of suitable cell problems in the solid reference element $Z_s$. For this, we define the symmetric matrices $M_{ij} \in \R^{n\times n}$ for $i,j=1,\ldots,n$ by
\begin{align*}
M_{ij} = \frac{e_i \otimes e_j}{2} + \frac{e_j \otimes e_i}{2}.
\end{align*}
Further, we define  $\chi_{ij} \in H^1_{\#}(Z_s)^n$ as the solutions of the cell problems 
\begin{align}
\begin{aligned}\label{CellProblemStandard}
-\nabla_y \cdot (A (D_y(\chi_{ij}) + M_{ij})) &= 0 &\mbox{ in }& Z_s,
\\
-A(D_y(\chi_{ij}) + M_{ij} ) \nu &= 0 &\mbox{ on }& \Gamma \cup S,
\\
\chi_{ij} \mbox{ is } Y\mbox{-periodic, } & \int_{Z_s} \chi_{ij} dy = 0.
\end{aligned}
\end{align}
Due to the Korn-inequality, this problem has a unique weak solution. We emphasize again that the only rigid-displacements on $Z_f$, which are $Y$-periodic, are constants. Obviously, for $i=1,\ldots,n$ we have $\chi_{in} = \chi_{ni} = y_n e_i - \frac{1}{|Z_s|}\int_{Z_s} y_n e_i dy$.

Additionally, we define $\chi_{ij}^B \in H^1_{\#}(Z_s)^n$ as the solutions of the cell problems
\begin{align}
\begin{aligned}\label{CellProblemHesse}
-\nabla_y \cdot \left( A(D_y(\chi_{ij}^B) - y_n M_{ij})\right) &= 0 &\mbox{ in }& Z_s,
\\
-A(D_y(\chi_{ij}^B) - y_n M_{ij})\nu &= 0 &\mbox{ on }& \Gamma \cup S,
\\
\chi_{ij}^B \mbox{ is } Y\mbox{-periodic, } & \int_{Z_s} \chi_{ij}^B dy = 0.
\end{aligned}
\end{align}
In the same way as above we obtain the existence of a unique weak solution. We also have for $i=1,\ldots,n$ that $\chi_{in}^B = \frac12 y_n^2 e_i - \frac{1}{2|Z_s|}\int_{Z_s} y_n^2 e_i dy$. Further, we denote by $\chi_0 \in H_{\#}^1(Z_s)^n$ the solution of the cell problem
\begin{align}
\begin{aligned}\label{CellProblemPressure}
-\nabla_y \cdot (A D_y(\chi_0)) &= 0 &\mbox{ in }& Z_s,
\\
-AD_y(\chi_0)\nu &=  \nu &\mbox{ on }& \Gamma ,
\\
-AD_y(\chi_0)\nu &=  0 &\mbox{ on }&  S<,
\\
\chi_0 \mbox{ is } Y\mbox{-periodic, } & \int_{Z_s} \chi_0 dy = 0.
\end{aligned}
\end{align}
The weak formulation for this problem reads as follows: $\chi_0 \in H_{\#}^1(Z_s)^n/\R^n$ solves for all $\phi \in H_{\#}^1(Z_s)^n$ (we emphasize that $\int_{\Gamma} \nu d\sigma = 0$) the problem
\begin{align}\label{eq:cell_weak_pressure}
\int_{Z_s} AD_y(\chi_0) : D_y(\phi) dy + \int_{\Gamma} \phi \cdot \nu d\sigma = 0.
\end{align}
After an integration by parts, this can be written as 
\begin{align*}
\int_{Z_s} AD_y(\chi_0) : D_y(\phi) dy + \int_{Z_s} \nabla_y \cdot \phi dy -  \int_S \phi\cdot \nu d\sigma = 0.
\end{align*}
We would like to figure out at this point the difference to problems in the full domain (not in a thin layer), where here we have an additional boundary term. Compare for example the results in \cite{brun2018upscaling}.
Next, we define the effective elasticity coefficients $a^{\ast},\, b^{\ast},\, c^{\ast} \in \R^{(n-1)\times (n-1) \times (n-1) \times (n-1)}$  for  $i,j,k,l = 1,\ldots,n-1$ by
\begin{align}
\begin{aligned}\label{def:homo_elastic_coefficients}
a^{\ast}_{ijkl} &:=  \frac{1}{\vert Z_s \vert} \int_{Z_s} A  \left(D_y(\chi_{i j}) + M_{ij}\right): \left(D_y (\chi_{kl})  + M_{kl} \right)dy,
\\
b^{\ast}_{ijkl} &:=   \frac{1}{\vert Z_s \vert} \int_{Z_s} A \left(D_y(\chi_{i j}^B)  - y_n M_{ij} \right) : \left(D_y (\chi_{kl})  + M_{kl} \right)dy,
\\
c^{\ast}_{ijkl} &:=   \frac{1}{\vert Z_s \vert} \int_{Z_s} A  \left(D_y(\chi_{i j}^B)  - y_n M_{ij} \right): \left(D_y (\chi_{kl}^B)  -y_n M_{kl}\right)dy,
\end{aligned}
\end{align}
with the cell solutions $\chi_{ij}$ and $\chi_{ij}^B$ defined above. The tensors $a^{\ast}$, $b^{\ast}$, and $c^{\ast}$ are positive in the following sense (see \cite[p. 195]{griso2020homogenization}): There exists a constant $c>0$ such that for all $\tau,\sigma \in \R^{(n-1)\times (n-1)} $ symmetric it holds that
\begin{align}\label{ineq:positivity_abc}
a^{\ast} \tau : \tau + b^{\ast}\tau : \sigma + b^{\ast} \sigma :\tau + c^{\ast} \sigma : \sigma \geq c \left[ \|\tau\|^2 + \|\sigma\|^2\right].
\end{align} 
Next, we define  $B^1 ,B^2\in \R^{(n-1)\times (n-1)}$   for $i,j=1,\ldots,n-1$ via
\begin{align}\label{def:homo_pressure_coefficients}
B^1_{ij} := \int_{Z_s} AD_y(\chi_0) :M_{ij} dy ,\qquad B^2_{ij} := -\int_{Z_s} AD_y(\chi_0): M_{ij} y_n dy
\end{align}
with the cell solution $\chi_0$ defined in $\eqref{CellProblemPressure}$ above. Obviously $B^1$ and $B^2$ are both symmetric. Now, we have the following representation of $u_2$.
\begin{proposition}\label{prop:corrector_u2}
The limit function $u_2$ from Proposition \ref{prop:conv_displacement} fulfills for almost every $(t,\x,y) \in (0,T)\times \Sigma \times Z_s$ 
\begin{align*}
u_2(t,\x,y) = \sum_{i,j=1}^{n-1} \left[ D_{\x}(\tilde{u}_1)_{ij} (t,\x) \chi_{ij}(y) +  \partial_{ij}u_0^n(t,\x) \chi_{ij}^B(y) \right] + p_0(t,\x) \chi_0(y),
\end{align*}
where the cell solutions $\chi_{ij}$, $\chi_{ij}^B$, and $\chi_0$ are defined in $\eqref{CellProblemStandard}$, $\eqref{CellProblemHesse}$, and $\eqref{CellProblemPressure}$.
\end{proposition}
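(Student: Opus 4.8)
The plan is to identify the cell equation satisfied by $u_2$ and then invoke linearity and uniqueness of the cell problems $\eqref{CellProblemStandard}$, $\eqref{CellProblemHesse}$, $\eqref{CellProblemPressure}$. Concretely, I would go back to the weak microscopic equation $\eqref{eq:micro_model}$, restricted to the solid part, and plug in an oscillating test function of the form $\phieps(t,x) = \vareps^2 \psi(t,\x) \varphi\!\left(\fxe\right)$ with $\psi \in C_0^\infty((0,T)\times\Sigma)$ and $\varphi \in C^\infty_{\#}(Z)^n$. The scaling is chosen so that $D(\phieps) = \vareps \psi(t,\x) D_y(\varphi)\!\left(\fxe\right) + O(\vareps^2)$, which is exactly the order matching the scaling $\frac{1}{\vareps}\int_{\oes} AD(\ueps):D(\phieps)$ against the behaviour $\frac{1}{\vareps}D(\ueps) = O(1)$ in the two-scale sense. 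Passing to the two-scale limit using Proposition \ref{prop:conv_displacement} (the convergence $\foe D(\tueps) \rats D_{\x}(\widetilde u_1) - y_n \nabla_{\x}^2 u_0^n + D_y(u_2)$), together with Proposition \ref{prop:conv_pressure} for the pressure term $-\int_{\oef}\peps \nabla\cdot\phieps$, and noting that the fluid force term and the $\vareps\int_{\oef}D(\veps):D(\phieps)$ term vanish in the limit because of the extra power of $\vareps$, I obtain an integrated identity over $(0,T)\times\Sigma\times Z_s$. Because $\psi$ is arbitrary, this localizes to an identity in $y$ for a.e.\ $(t,\x)$.

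The resulting cell identity reads, for a.e.\ $(t,\x)$ and all $\varphi \in H^1_{\#}(Z_s)^n$,
\begin{align*}
\int_{Z_s} A\big(D_{\x}(\widetilde u_1)(t,\x) - y_n \nabla_{\x}^2 u_0^n(t,\x) + D_y(u_2)(t,\x,\cdot)\big) : D_y(\varphi) \, dy + \int_{\Gamma} p_0(t,\x)\, \varphi\cdot\nu \, d\sigma = 0,
\end{align*}
where the boundary term on $\Gamma$ comes from the normal-stress transmission condition, equivalently from carefully tracking the pressure term $-\int_{\oef}\peps\nabla\cdot\phieps$ after integration by parts (this is precisely the mechanism producing the $\int_\Gamma$ term in $\eqref{eq:cell_weak_pressure}$). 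Writing $D_{\x}(\widetilde u_1) = \sum_{i,j=1}^{n-1} D_{\x}(\widetilde u_1)_{ij} M_{ij}$ and $\nabla_{\x}^2 u_0^n = \sum_{i,j=1}^{n-1}\partial_{ij}u_0^n\, M_{ij}$ (recall $D_{\x}$ and $\nabla_{\x}^2$ have vanishing $n$-th row and column by the conventions in the Notations section), the identity becomes linear in the coefficients $D_{\x}(\widetilde u_1)_{ij}(t,\x)$, $\partial_{ij}u_0^n(t,\x)$, and $p_0(t,\x)$. Comparing term by term with the weak formulations of $\eqref{CellProblemStandard}$, $\eqref{CellProblemHesse}$, and $\eqref{eq:cell_weak_pressure}$, the function
\begin{align*}
\sum_{i,j=1}^{n-1}\big[D_{\x}(\widetilde u_1)_{ij}(t,\x)\chi_{ij} + \partial_{ij}u_0^n(t,\x)\chi_{ij}^B\big] + p_0(t,\x)\chi_0
\end{align*}
satisfies the same variational equation in $H^1_{\#}(Z_s)^n/\R^n$ as $u_2(t,\x,\cdot)$. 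Since that equation has a unique solution modulo constants (coercivity on $H^1_{\#}(Z_s)^n/\R^n$ via the periodic Korn inequality and ellipticity of $A$) and $u_2$ is normalized to have zero mean over $Z_s$ (it lives in $L^2(\Sigma,H^1_{\#}(Z)/\R)^n$ by Proposition \ref{prop:conv_displacement}) exactly as the $\chi$'s are, the two functions coincide a.e., which is the claim.

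I expect the main obstacle to be the rigorous bookkeeping of the pressure/normal-stress term: the excerpt's compactness results in Section \ref{sec:compactness_results} do not explicitly pass $\vareps^{-1}$-scaled pressure-times-divergence terms to the two-scale limit against $\vareps^2$-order test functions, so one must be careful that $-\int_{\oef}\peps\nabla\cdot\phieps$ with $\nabla\cdot\phieps = \vareps\psi\,\nabla_y\cdot\varphi + O(\vareps^2)$ contributes $-\int\int_{Z_f}p_0\,\psi\,\nabla_y\cdot\varphi$, and then that one re-expresses this via integration by parts as the $\int_\Gamma$ boundary term plus a solid-interior term — matching the structure of $\eqref{CellProblemPressure}$ rather than a volume source. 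A secondary technical point is justifying that the $\vareps\int_{\oef}D(\veps):D(\phieps)$ and $\int_{\oef}f_\vareps\cdot\phieps$ contributions are $o(1)$: this follows from the a priori bound $\sqrt{\vareps}\|D(\veps)\|_{L^2} \le C$ of Lemma \ref{lem:apriori_velo_displac} and assumption \ref{ass:f_eps}, against $\|D(\phieps)\|_{L^2((0,T)\times\oef)} = O(\vareps)$, giving terms of order $\vareps^{1/2}$ and $\vareps$ respectively. Once these limits are in hand, the identification is a routine matching argument, and the uniqueness of the cell problems closes it.
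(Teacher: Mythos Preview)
Your approach is correct and essentially identical to the paper's: insert an oscillating test function into $\eqref{eq:micro_model}$, pass to the two-scale limit via Propositions~\ref{prop:conv_displacement} and~\ref{prop:conv_pressure} (the fluid-diffusive and force terms vanishing by the a~priori bounds), read off the cell problem for $u_2$, and conclude by linearity and uniqueness. The only cosmetic differences are that the paper takes $\phieps(t,x)=\phi\!\left(t,\x,\fxe\right)$ with no $\vareps$-prefactor (your $\vareps^2$ is harmless but superfluous) and keeps the pressure contribution in the volume form $-\int_{Z_f}p_0\,\nabla_y\!\cdot\phi\,dy$ rather than your boundary form $\int_\Gamma p_0\,\varphi\cdot\nu\,d\sigma$; these are equivalent once $p_0$ is known to be independent of $y$ (Proposition~\ref{prop:p_0}), which you implicitly use when writing $p_0(t,\x)$ in your cell identity.
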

\begin{proof}
As a test-function in $\eqref{MicroModel}$ we choose $\phieps(t,x):= \phi\btxfxe $ with $\phi \in C_0^{\infty}((0,T)\times \Sigma, C_{\#}^{\infty}(Z))^n$ to obtain
\begin{align*}
\int_{\oef}&  D(\veps) : \left[ \vareps D_{\x} (\phi) + D_y(\phi)\right]\btxfxe dx - \foe \int_{\oef} \peps \left[ \vareps \nabla_{\x} \cdot  \phi + \nabla_y \cdot \phi\right] \btxfxe dx
\\
+& \foe \int_{\oes} A \foe D(\ueps) :  \left[ \vareps D_{\x} (\phi) + D_y(\phi)\right]\btxfxe dx 
\\
&= \int_{\oef} f_{\vareps} \cdot \phi \btxfxe dx + \int_{\oes} g_{\vareps} \cdot \phi \btxfxe dx.
\end{align*}
After integration with respect to time we pass to the limit $\vareps \to 0$. From the a priori estimates in Lemma \ref{lem:apriori_velo_displac} it follows that the first term on the left-hand side vanishes. Due to our assumptions on $f_{\vareps}$ and $g_{\vareps}$ the same holds for the right-hand side. From the convergence results in Proposition \ref{prop:conv_displacement} and \ref{prop:conv_pressure} we get (the terms including $\nabla_{\x} \cdot \phi$ and $D_{\x}(\phi)$ vanish) almost everywhere in $(0,T)\times \Sigma$
\begin{align*}
0 =  \int_{Z_s} A\left[D_{\x}(\widetilde{u}_1) - y_n \nabla_{\x}^2 u_0^n + D_y(u_2)\right] : D_y(\phi) dy  -  \int_{Z_f} p_0 \nabla_y \cdot \phi dy .
\end{align*}
In other words, $u_2$ is the weak solution of the problem
\begin{align*}
-\nabla_y \cdot \left(A\left[D_{\x}(\widetilde{u}_1) - y_n \nabla_{\x}^2 u_0^n + D_y(u_2)\right] \right) &= 0 &\mbox{ in }& (0,T)\times \Sigma \times Z_s,
\\
-A\left[D_{\x}(\widetilde{u}_1) - y_n \nabla_{\x}^2 u_0^n + D_y(u_2)\right]\nu &= p_0 \nu &\mbox{ on }& (0,T)\times \Sigma \times \Gamma,
\\
-A\left[D_{\x}(\widetilde{u}_1) - y_n \nabla_{\x}^2 u_0^n + D_y(u_2)\right]\nu &= 0 &\mbox{ on }& (0,T)\times \Sigma \times S,
\\
u_2 \mbox{ is } Y\mbox{-periodic}, \, \int_{Z_s} u_2 dy = 0.
\end{align*}
By the Korn inequality this problem has a unique weak solution. By an elemental calculation we obtain the desired result.
\end{proof}

\subsection{The generalized Darcy-law}

We derive the generalized Darcy-law $\eqref{Macro_Model:Darcy_law}$ and characterize the Darcy-velocity $\v_1$ (see $\eqref{def:Darcy_velocity}$). First of all, we derive a two-pressure Stokes model including the limit functions $w_1$ and $p_0$, as well as an additional pressure $p_1$. From this model we obtain a representation of $w_1$ and $\w_1$, and therefore also for $v_1$ and $\v_1$. Then we can easily use the representation of $\nabla_{\x} \cdot \w_1$ obtained in Proposition \ref{prop:conv_divergence} to obtain the generalized Darcy-law.

Before we give the two-pressure Stokes model, we formulate the usual Stokes cell problem on $Z_f$: The tuple $(q_i,\pi_i) \in H^1_{\#}(Z_f,\Gamma)^n\times L^2(Z_f)/\R$ is the unique weak solution of
\begin{align}
\begin{aligned}\label{CellProblem_Stokes}
-\nabla_y \cdot D(q_i) + \nabla_y \pi_i &= e_i &\mbox{ in }& Z_f,
\\
\nabla_y \cdot q_i &= 0 &\mbox{ in }& Z_f,
\\
q_i &= 0 &\mbox{ on }& \Gamma,
\\
q_i,\, \pi_i \mbox{ are } Y\mbox{-periodic}.
\end{aligned}
\end{align}
We emphasize that $(q_n,\pi_n) = (0,y_n)$.
Further, we define the permeability tensor $K \in \R^{n\times n}$ for $i,j=1,\ldots,n$ by
\begin{align}\label{def:permeability_tensor}
K_{ij} := \int_{Z_f} D_y(q_i) : D_y(q_j) dy,
\end{align}
with $q_i$ defined in $\eqref{CellProblem_Stokes}$.
Obviously, we have $K_{ni} = K_{in} = 0$ for $i=1,\ldots,n$, so $K$ can also be regarded as a matrix in $\R^{(n-1)\times (n-1)}$ (we use the same notation). It is well-known that $K$ is symmetric and positive on $\R^{n-1}$.
Next, we observe that the limit pressure $p_0$ is independent of the micro-variable $y$:
\begin{proposition}\label{prop:p_0}
The macroscopic pressure $p_0$ from Proposition \ref{prop:conv_pressure} fulfills for almost all $(t,\x,y) \in (0,T)\times \Sigma \times Z_f$ 
\begin{align*}
p_0(t,\x,y) = p_0(t,\x).
\end{align*}
\end{proposition}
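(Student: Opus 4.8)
The plan is to test the weak microscopic equation \eqref{eq:micro_model} against an oscillating test function concentrated in the fluid part of the reference cell and then to pass to the limit $\vareps\to 0$: among the terms of \eqref{eq:micro_model} only the one carrying $\peps$ survives, and it forces $\nabla_y p_0 = 0$.

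Concretely, I would fix $\phi\in C_0^{\infty}((0,T)\times\Sigma,C_{\#}^{\infty}(Z))^n$ whose support in the fast variable is compactly contained in $Z_f$ (hence, in particular, away from $\Gamma$, from $S$, and, by the support in $\x$, from $\partial\Sigma$), and put $\phieps(t,x):=\phi\btxfxe$, extended by zero to $\oes$. Then $\phieps\in H^1(\oeps,\partial_D\oeps)^n$, it vanishes on $\geps$, and its support lies in $\oef$; consequently the solid elasticity term and the $g_{\vareps}$-term in \eqref{eq:micro_model} drop out. Inserting $D(\phieps)=\foe(D_y(\phi))\btxfxe+(D_{\x}(\phi))\btxfxe$ and $\nabla\cdot\phieps=\foe(\nabla_y\cdot\phi)\btxfxe+(\nabla_{\x}\cdot\phi)\btxfxe$ and integrating over $(0,T)$, the remaining identity reads
\begin{align*}
\int_0^T\!\int_{\oef} D(\veps):\big(D_y(\phi)+\vareps D_{\x}(\phi)\big)\btxfxe\,dx\,dt
-\foe\int_0^T\!\int_{\oef}\peps\,(\nabla_y\cdot\phi)\btxfxe\,dx\,dt
\\
-\int_0^T\!\int_{\oef}\peps\,(\nabla_{\x}\cdot\phi)\btxfxe\,dx\,dt
=\int_0^T\!\int_{\oef} f_{\vareps}\cdot\phi\btxfxe\,dx\,dt.
\end{align*}

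Now the a priori bounds close the argument. By Lemma \ref{lem:apriori_velo_displac}, Lemma \ref{lem:apriori_pressure} and assumption \ref{ass:f_eps} one has $\|D(\veps)\|_{L^2((0,T)\times\oef)}+\|\peps\|_{L^2((0,T)\times\oef)}+\|f_{\vareps}\|_{L^2((0,T)\times\oef)}\le C\sqrt{\vareps}$, while $\|\psi\btxfxe\|_{L^2((0,T)\times\oef)}\le C\sqrt{\vareps}$ for any fixed smooth $\psi$ since $|\oef|\le C\vareps$; hence the first term, the third term and the right-hand side are $O(\vareps)$ and disappear in the limit. The second term is precisely a two-scale test pairing, so by $\chi_{\oef}\peps\rats\chi_{Z_f}p_0$ (Proposition \ref{prop:conv_pressure}) it converges to $\int_0^T\int_{\Sigma}\int_{Z_f}p_0\,(\nabla_y\cdot\phi)\,dy\,d\x\,dt$. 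Passing to the limit therefore gives
\begin{align*}
\int_0^T\int_{\Sigma}\int_{Z_f} p_0\,(\nabla_y\cdot\phi)\,dy\,d\x\,dt=0
\end{align*}
for all such $\phi$; choosing $\phi(t,\x,y)=\eta(t,\x)\psi(y)$ with $\eta\in C_0^{\infty}((0,T)\times\Sigma)$ and $\psi\in C_0^{\infty}(Z_f)^n$ arbitrary yields $\nabla_y p_0(t,\x,\cdot)=0$ in $\mathcal{D}'(Z_f)$ for a.e. $(t,\x)$, and since $Z_f$ is open and connected this means $p_0(t,\x,\cdot)$ is constant, which is the claim.

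I do not expect a genuine difficulty: this is the standard argument showing that the Darcy pressure carries no fast oscillation. The only points requiring care are that the oscillating test function with support in $Z_f$ is admissible — it must vanish on $\partial_D\oeps$ and not touch the solid — and that the $\foe$ in the thin-layer two-scale convergence is matched exactly by the factor produced when differentiating $\phi(t,\x,x/\vareps)$ in the divergence term; every other term is absorbed by the $O(\sqrt{\vareps})$ a priori estimates of Section \ref{sec:Apriori_Estimates}.
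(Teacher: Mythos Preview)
Your proof is correct and follows essentially the same approach as the paper: both test \eqref{eq:micro_model} with an oscillating function $\phieps(t,x)=\phi\btxfxe$ supported in the fluid cell $Z_f$, observe that only the term $\foe\int_{\oef}\peps\,(\nabla_y\cdot\phi)\btxfxe\,dx$ survives in the limit, and conclude $\nabla_y p_0=0$ from the resulting identity. You spell out the $O(\vareps)$ bounds for the remaining terms explicitly, whereas the paper simply refers back to the proof of Proposition \ref{prop:corrector_u2}, but the argument is the same.
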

\begin{proof}
As a test-function in $\eqref{MicroModel}$ we choose $\phieps(t,x):= \phi\btxfxe $ with $\phi \in C_0^{\infty}((0,T)\times \Sigma \times  Z_f)^n$. After integration with respect to time we can pass to the limit $\vareps \to 0$ and obtain (we refer to the proof of Proposition \ref{prop:corrector_u2} for more details)
\begin{align*}
- \int_0^T \int_{\Sigma} \int_{Z_f} p_0(t,\x,y) \nabla_y \cdot \phi(t,\x,y) dy d\x dt = 0.
\end{align*}
This implies that $p_0$ is constant with respect to $y$.
\end{proof}
Now, we are able to derive the two-pressure Stokes model:
\begin{proposition}\label{prop:cell_equation_w1_Darcy_vel}
We have $p_0 \in L^2((0,T),H^1(\Sigma,\partial_N \Sigma))$ and there exists $p_1 \in L^2((0,T)\times \Sigma \times Z_f)$, such that for all $\phi \in L^2(\Sigma, H_{\#}^1(\Sigma, \partial_N \Sigma))^n$ and almost everywhere in $(0,T)$ it holds that
\begin{align*}
\int_{\Sigma} \int_{Z_f} D_y(w_1) : D_y(\phi)& dy d\x  + \int_{\Sigma} \int_{Z_f} \nabla_{\x} \tilde{p}_0 \cdot \phi dy d\x 
\\
&-\int_{\Sigma} \int_{Z_f} p_1 \nabla_y \cdot \phi dy d\x = |Z_f| \int_{\Sigma} f_0 \cdot \phi d\x .
\end{align*}
Further, $w_1$ and $p_1$ have the representation
\begin{align*}
w_1 = \sum_{i=1}^n \left(f_0 - \nabla_{\x}p_0\right)_i q_i, \qquad p_1 = \sum_{i=1}^n \left(f_0 -\nabla_{\x}p_0\right)_i \pi_i.
\end{align*}
In particular, the Darcy-velocity $\w_1$ is given by
\begin{align*}
\w_1 = K (f_0 - \nabla_{\x} p_0)
\end{align*}
with the permeability tensor $K$ defined in $\eqref{def:permeability_tensor}$.
\end{proposition}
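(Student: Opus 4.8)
The plan is to test the microscopic weak formulation $\eqref{eq:micro_model}$ with oscillating test functions that resolve the fast scale in the fluid part, pass to the two-scale limit, and then feed the resulting identity into the Stokes cell problem $\eqref{CellProblem_Stokes}$. \textbf{Step 1 (a two-pressure identity).} For $\theta\in C_c^\infty((0,T))$ and $\Phi(t,\x,y)$ smooth, $Y$-periodic in $y$, with $\Phi(t,\cdot,\cdot)\in H^1_{\#}(Z_f,\Gamma)$, $\nabla_y\cdot\Phi=0$, and $\Phi=0$ near $\partial_D\Sigma$, I would insert $\phieps(t,x):=\vareps\,\Phi(t,\x,x/\vareps)$, extended by zero to $\oes$, as a test function in $\eqref{eq:micro_model}$, multiply by $\theta$ and integrate over $(0,T)$. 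Because $\Phi$ vanishes on $\Gamma$ the extension by zero is admissible and the solid term disappears; this is essential, since under the $\vareps^{-2}$-normalization needed below to see $D_y(w_1)$ a non-vanishing solid contribution would blow up (by Lemma~\ref{lem:apriori_velo_displac} only $\|D(\ueps)\|_{L^2(\oes)}=O(\vareps^{3/2})$). Using $D(\phieps)=D_y(\Phi)+\vareps D_{\x}(\Phi)$ and $\nabla\cdot\phieps=\vareps\,\nabla_{\x}\cdot\Phi$, dividing by $\vareps^2$ and letting $\vareps\to0$: the term $\vareps^{-1}\int_{\oef}D(\veps){:}D_y(\Phi)$ converges by $\chi_{\oef}D(\veps)\rats\chi_{Z_f}D_y(w_1)$ (Proposition~\ref{prop:conv_fluid}); the $D_{\x}(\Phi)$-term is $O(\sqrt\vareps)\cdot O(\sqrt\vareps)$ by Lemma~\ref{lem:apriori_velo_displac} and drops; $-\vareps^{-1}\int_{\oef}\peps\,\nabla_{\x}\cdot\Phi$ converges by Propositions~\ref{prop:conv_pressure} and \ref{prop:p_0} (where $p_0=p_0(t,\x)$ is used); and $\vareps^{-1}\int_{\oef}f_{\vareps}\cdot\Phi$ converges by \ref{ass:f_eps}. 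This yields, a.e.\ in $(0,T)$,
\begin{equation*}
\int_{\Sigma}\int_{Z_f}D_y(w_1){:}D_y(\Phi)\,dy\,d\x-\int_{\Sigma}\int_{Z_f}p_0\,\nabla_{\x}\cdot\Phi\,dy\,d\x=\int_{\Sigma}\int_{Z_f}f_0\cdot\Phi\,dy\,d\x
\end{equation*}
for all such $\Phi$ (first smooth, then by density in the appropriate space of Appendix~\ref{sec:two_pressure_decomp}).

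\textbf{Step 2 ($p_0\in L^2((0,T),H^1(\Sigma,\partial_N\Sigma))$ and the Darcy law).} I would specialize $\Phi(t,\x,y)=\psi(t,\x)\,q_i(y)$ with $i=1,\dots,n-1$, $q_i$ the Stokes cell solution from $\eqref{CellProblem_Stokes}$ (so $\nabla_y\cdot q_i=0$, $q_i|_\Gamma=0$), and $\psi$ scalar, first compactly supported in $\Sigma$. Testing the weak form of $\eqref{CellProblem_Stokes}$ for $q_i$ with $w_1(t,\x,\cdot)$ (admissible since $\nabla_y\cdot w_1=0$, $w_1|_\Gamma=0$) gives $\int_{Z_f}D_y(w_1){:}D_y(q_i)=\int_{Z_f}(w_1)_i=(\w_1)_i$, and testing it with $q_j$ gives $\int_{Z_f}(q_i)_j=K_{ij}$. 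Hence the identity of Step~1 collapses to $\int_{\Sigma}\psi\,(\w_1)_i\,d\x-\int_{\Sigma}p_0\,(K\nabla_{\x}\psi)_i\,d\x=\int_{\Sigma}\psi\,(Kf_0)_i\,d\x$, i.e.\ $\w_1+K\nabla_{\x}p_0=Kf_0$ in $\mathcal{D}'(\Sigma)$ (the $n$-th component being trivial, as $Ke_n=0$ and $f_0^n=0$). Since $K$ is a constant matrix invertible on $\R^{n-1}$, this forces $\nabla_{\x}p_0=f_0-K^{-1}\w_1\in L^2((0,T)\times\Sigma)^{n-1}$; combined with Proposition~\ref{prop:conv_pressure} this gives $p_0\in L^2((0,T),H^1(\Sigma))$ and the generalized Darcy relation $\w_1=K(f_0-\nabla_{\x}p_0)$. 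Allowing now $\psi$ nonzero on $\partial_N\Sigma$ (still $=0$ on $\partial_D\Sigma$) and integrating by parts in $\x$, the leftover boundary term $\int_{\partial_N\Sigma}p_0\,(K\nu)_i\,\psi\,d\sigma$ must vanish for all $\psi$ and all $i$; since $K\nu\neq0$ this gives $p_0=0$ on $\partial_N\Sigma$, i.e.\ $p_0\in L^2((0,T),H^1(\Sigma,\partial_N\Sigma))$, and $p_0(0)=0$ by Proposition~\ref{prop:conv_pressure}.

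\textbf{Step 3 (representation of $w_1$, $p_1$ and the full two-pressure model).} From Step~1 together with $p_0\in H^1$ in $\x$, for a.e.\ $(t,\x)$ the profile $w_1(t,\x,\cdot)$ satisfies $\int_{Z_f}D_y(w_1){:}D_y(\Phi_0)=(f_0-\nabla_{\x}p_0)(t,\x)\cdot\int_{Z_f}\Phi_0\,dy$ for all divergence-free $\Phi_0\in H^1_{\#}(Z_f,\Gamma)^n$, which is exactly the weak Stokes problem on $Z_f$ with constant forcing $(f_0-\nabla_{\x}p_0)(t,\x)$; by linearity and uniqueness, $w_1=\sum_{i=1}^{n-1}(f_0-\nabla_{\x}p_0)_i\,q_i$. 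I then \emph{define} $p_1:=\sum_{i=1}^{n-1}(f_0-\nabla_{\x}p_0)_i\,\pi_i\in L^2((0,T)\times\Sigma\times Z_f)$ and check the full two-pressure weak equation by superposition: testing the weak form of $\eqref{CellProblem_Stokes}$ now with \emph{arbitrary} (not necessarily divergence-free) fields $\phi$ gives $\int_{\Sigma}\int_{Z_f}D_y(w_1){:}D_y(\phi)-\int_{\Sigma}\int_{Z_f}p_1\,\nabla_y\cdot\phi=\int_{\Sigma}\int_{Z_f}(f_0-\nabla_{\x}p_0)\cdot\phi$, which is the asserted identity once $\nabla_{\x}p_0$ is moved to the left and $f_0$ is recognized to be $y$-independent. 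Finally $\w_1=\int_{Z_f}w_1\,dy=\sum_i(f_0-\nabla_{\x}p_0)_i\int_{Z_f}q_i\,dy=K(f_0-\nabla_{\x}p_0)$, with vanishing $n$-th component.

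\textbf{Expected main obstacle.} The delicate point is the interplay between the choice of scaling and the boundary conditions: one must use precisely the normalization that makes $D_y(w_1)$ visible while simultaneously forcing the oscillating test function to vanish on $\Gamma$ so that the (otherwise divergent) solid contribution is eliminated, and one must retain the lateral behaviour on $\partial_N\Sigma$ in order to derive $p_0=0$ there — this is exactly what dictates working in the tailored function space of Appendix~\ref{sec:two_pressure_decomp}, and handling its density/approximation properties carefully. A secondary subtlety is that the two-scale limit by itself gives only $p_0\in L^2$ in $\x$, so the regularity $p_0\in H^1(\Sigma,\partial_N\Sigma)$ is not automatic and has to be extracted a posteriori from the cell-problem test in Step~2.
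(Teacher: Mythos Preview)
Your argument is correct, but it proceeds along a genuinely different route from the paper. After Step~1 (which matches the paper's first move of testing $\eqref{eq:micro_model}$ with $\vareps^{-1}\phi(\cdot,\cdot/\vareps)$, $\phi$ supported in $Z_f$, divergence-free in $y$, and extending by density to $\spaceV$), the paper does \emph{not} specialize to $\psi(\x)q_i(y)$. Instead it invokes the abstract two-pressure decomposition of $\spaceV_0^{\perp}$ (Lemma~\ref{lem:two_pressure_decom}, based on a Helmholtz decomposition with mixed boundary conditions) to obtain $\tilde p_0\in H^1(\Sigma,\partial_N\Sigma)$ and $p_1\in L^2(\Sigma\times Z_f)$ directly from the fact that the functional defined by the identity of Step~1 vanishes on $\spaceV_0$; uniqueness of that decomposition then forces $\tilde p_0=p_0$, and the cell problem on $Z_f$ is read off afterwards.

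Your approach instead extracts $\nabla_{\x}p_0\in L^2$ and the boundary condition $p_0|_{\partial_N\Sigma}=0$ by hand, using the positivity of the permeability tensor $K$ on $\R^{n-1}$ and the explicit cell solutions $q_i$, and then \emph{defines} $p_1$ via the cell pressures $\pi_i$ rather than obtaining it from an annihilator argument. This is more elementary and self-contained (you never need Lemma~\ref{lem:two_pressure_decom} or the Helmholtz decomposition of Lemma~\ref{lem:Helmholtz_decomposition}); the price is that it is tied to the specific Stokes structure and to knowing the cell problems in advance, whereas the paper's functional-analytic route would transfer verbatim to other two-pressure situations. Both routes still rely on the density Lemma~\ref{lem:density_V} to justify testing with non-smooth $\Phi=\psi q_i$.
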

\begin{proof}
We test the microscopic equation $\eqref{eq:micro_model}$ with $\phieps(t,x):= \vareps^{-1}\phi\btxfxe $ with  $\phi \in C^{\infty}_0 ((0,T)\times \overline{\Sigma}\setminus \partial_D\Sigma , H_{\#}^1(Z_f,\Gamma))^n$ with $\nabla_y \cdot \phi = 0$ (see $\eqref{def:Hper_Gamma}$ for the definition of $H_{\#}^1(Z_f,\Gamma)$).
Here we extend $\phi$ by zero to the whole cell $Z$. We get after an integration with respect to time
\begin{align*}
\frac{1}{\vareps} \int_0^T\int_{\oef}&  D(\veps) : \left[ \vareps D_{\x} (\phi) + D_y(\phi)\right]\btxfxe dx dt
\\
& - \foe \int_0^T\int_{\oef} \peps  \nabla_{\x} \cdot \phi\btxfxe dx dt 
= \foe \int_0^T\int_{\oef} f_{\vareps} \cdot \phi \btxfxe dxdt.
\end{align*}
Using the two-scale compactness results for $D(\veps)$ and $\peps$ from Proposition \ref{prop:conv_fluid} and \ref{prop:conv_pressure}, and the assumption \ref{ass:f_eps} for $f_{\vareps}$, we obtain for $\vareps \to 0$ ($\overline{\phi}$ denotes the mean of $\phi$ with respect to $Z_f$, see also $\eqref{def:mean_Z_f}$)
\begin{align*}
\int_0^T\int_{\Sigma} \int_{Z_f} D_y(w_1) : D_y(\phi) dy d\x dt - \int_0^T \int_{\Sigma} p_0  \nabla_{\x} \cdot \overline{\phi}d\x dt = |Z_f| \int_0^T \int_{\Sigma} f_0 \cdot \phi d\x dt.
\end{align*}
By the density result in Lemma \ref{lem:density_V} this equation is valid for all $\phi \in L^2((0,T), \spaceV)$, see $\eqref{def:spaceV}$ for the definition of $\spaceV$.
From Lemma \ref{lem:two_pressure_decom} we obtain the existence of $\tilde{p}_0\in L^2((0,T),H^1(\Sigma,\partial_N \Sigma))$ and $p_1 \in L^2((0,T)\times \Sigma \times Z_f)$, such that almost everywhere in $(0,T)$ it holds that
\begin{align*}
\int_{\Sigma}\left\{ \int_{Z_f} D_y(w_1) : D_y(\phi) dy   +   \nabla_{\x} \tilde{p}_0 \cdot \overline{\phi}  - \int_{Z_f} p_1 \nabla_y \cdot \phi dy \right\} d\x = |Z_f| \int_{\Sigma} f_0 \cdot \phi d\x 
\end{align*}
for all $\phi \in L^2((0,T)\times \Sigma, H_{\#}^1(Z_f,\Gamma))^n$. By the uniqueness of the decomposition in Lemma \ref{lem:two_pressure_decom} we immediately obtain $p_0 = \tilde{p}_0$.  In other words, the tuple $(w_1,p_1)$ is the unique weak solution of the cell problem (see also Proposition \ref{prop:conv_fluid})
\begin{align*}
-\nabla_y \cdot D_y(w_1)   + \nabla_y p_1 &= f_0 - \nabla_{\x} p_0 &\mbox{ in }& (0,T)\times \Sigma \times Z_f,
\\
\nabla_y \cdot w_1 &= 0 &\mbox{ in }& (0,T)\times \Sigma \times Z_f,
\\
w_1 &= 0 &\mbox{ on }& (0,T)\times \Sigma \times \Gamma,
\\
w_1,\, p_1 \mbox{ are } Y\mbox{-periodic}.
\end{align*}
Using some standard arguments we obtain the desired result.
\end{proof}

\begin{remark}
With (see also Remark \ref{rem:v1_w1})
\begin{align*}
v_1 := w_1 + \partial_t \widetilde{u}_1 - y_n \nabla_{\x} \partial_t u_0^n
\end{align*}
we have that $(v_1,p_1)$ is the unique weak solution of 
\begin{align*}
-\nabla_y \cdot D_y(v_1)   + \nabla_y p_1 &= f_0 - \nabla_{\x} p_0 &\mbox{ in }& (0,T)\times \Sigma \times Z_f,
\\
\nabla_y \cdot v_1 &= 0 &\mbox{ in }& (0,T)\times \Sigma \times Z_f,
\\
v_1 &= \partial_t \widetilde{u}_1 - y_n \nabla_{\x} \partial_t u_0^n &\mbox{ on }& (0,T)\times \Sigma \times \Gamma,
\\
v_1,\, p_1 \mbox{ are } Y\mbox{-periodic}.
\end{align*}
Further, we have with $d_n^f:= \int_{Z_f} y_n dy$
\begin{align*}
\overline{v}_1 = K(f_0 - \nabla_{\x} p_0) +|Z_f| \partial_t \widetilde{u}_1 - d_n^f \partial_t \nabla_{\x} u_0^n.
\end{align*}
\end{remark}
Finally, we want to derive the Darcy-law for $p_0$. For this we use the identity for $\nabla_{\x} \cdot \w_1$ from Proposition \ref{prop:conv_divergence}. Together with the representation of $u_2$ from Proposition \ref{prop:corrector_u2} and the identity for $\w_1$ form Proposition \ref{prop:cell_equation_w1_Darcy_vel} we get
\begin{align*}
\nabla_{\x} \cdot \left(K(f_0 - \nabla_{\x} p_0)\right) =& \nabla_{\x} \cdot \w_1
\\
=&\partial_t \left( -|Z_f| \nabla_{\x} \cdot \widetilde{u}_1 + d_n^f \Delta_{\x} u_0^n \right) + \partial_t p_0 \int_{\Gamma} \chi_0 \cdot \nu d\sigma_y
\\
&+ \sum_{i,j=1}^{n-1} \left\{ \partial_t D_{\x} (\widetilde{u}_1)_{ij} \int_{\Gamma} \chi_{ij} \cdot \nu d\sigma_y + \partial_t \partial_{ij} u_0^n \int_{\Gamma} \chi_{ij}^B \cdot \nu d\sigma_y \right\}.
\end{align*}
We introduce the scalar $\alpha^h \in \R$  
\begin{align*}
\alpha^h :=- \int_{\Gamma} \chi_0 \cdot \nu d\sigma_y.
\end{align*}
By testing the weak equation $\eqref{eq:cell_weak_pressure}$ for the cell problem $\eqref{CellProblemPressure}$ with $\chi_0$  we obtain that $\alpha^h> 0$.
Further, we obtain with $B^1$ from $\eqref{def:homo_pressure_coefficients}$ using the cell problems $\eqref{CellProblemStandard}$ and $\eqref{CellProblemPressure}$ for $\chi_{ij}$ and $\chi_0$ (for $i,j=1,\ldots,n-1$)
\begin{align*}
\int_{\Gamma} \chi_{ij} \cdot \nu d\sigma &=  - \int_{Z_s} AD_y(\chi_0) : D_y(\chi_{ij}) dy = \int_{Z_s} AM_{ij} : D_y(\chi_0)  dy = B_{ij}^1.
\end{align*}
With similar arguments we obtain
\begin{align*}
\int_{\Gamma} \chi_{ij}^B \cdot \nu d\sigma = - \int_{Z_s} A D_y(\chi_{ij}^B)  : D_y(\chi_0) dy
= - \int_{Z_s} AD_y(\chi_0) :M_{ij} y_n dy = B_{ij}^2
\end{align*}
with $B^2 $ from $\eqref{def:homo_pressure_coefficients}$.
Hence, we obtain
\begin{align*}
\nabla_{\x} \cdot \left(K(f_0 - \nabla_{\x} p_0)\right) =&\partial_t \left( -|Z_f| \nabla_{\x} \cdot \widetilde{u}_1 + d_n^f \Delta_{\x} u_0^n \right) - \alpha^h \partial_t p_0 
\\
&+ \partial_t \left( D_{\x}(\widetilde{u}_1) : B^1 + \nabla_{\x}^2 u_0^n : B^2 \right)
\\
=& \partial_t \left[ \left(B^1 - |Z_f|I\right):D_{\x}(\widetilde{u}_1) + \left( B^2 + d_n^f I\right) : \nabla_{\x}^2 u_0^n\right] - \alpha^h \partial_t p_0.
\end{align*}
This is equation  $\eqref{Macro_Model:Darcy_law}$.

\subsection{The plate equation}
To finish the proof of Theorem \ref{thm:macro_model}, we have to show the plate equation $\eqref{eq:Macro_Model_elastic}$ - $\eqref{Macro_Model:plate_4th_order}$. For this, we use a test-function  in $\eqref{eq:micro_model}$ with the same structure as in the asymptotic expansion $\ueps^{\app}$ in $\eqref{def:approximation_micro_solution}$ (up to order $\vareps$). More precisely, we choose 
\begin{align*}
\phieps(t,x):= \frac{1}{\vareps^2} \psi(t) \left[ V(\x) e_n + \vareps\left( U(\x) - \frac{x_n}{\vareps} \nabla_{\x} V(\x)\right)\right]
\end{align*}
with $\psi\in C_0^{\infty}(0,T)$, $U\in H^1_0(\Sigma)^{n-1} \times \{0\}$ (hence $U^n=0$), and $V \in H_0^2(\Sigma)$. We have
\begin{align*}
D(\phieps) = \frac{1}{\vareps} \psi(t) \left[ D_{\x}(U) - \frac{x_n}{\vareps} \nabla_{\x}^2 V\right], \qquad
\nabla \cdot \phieps = \frac{1}{\vareps} \psi(t) \left[ \nabla_{\x} \cdot U - \frac{x_n}{\vareps} \Delta_{\x} V\right].
\end{align*}
Hence, we obtain for $\eqref{eq:micro_model}$  almost everywhere in  $ (0,T)$
\begin{align*}
 \int_{\oef}& D(\veps) : \left[  D_{\x} (U)  - \frac{x_n}{\vareps} \nabla_{\x}^2 V\right]  \psi dx 
- \foe\int_{\oef} \peps \left[ \nabla_{\x} \cdot U - \frac{x_n}{\vareps} \Delta_{\x} V\right] \psi dx 
\\
&+ \foe \int_{\oes} A\frac{D(\ueps)}{\vareps} : \left[ D_{\x} (U) - \frac{x_n}{\vareps} \nabla_{\x}^2 V\right]  \psi dx 
\\
=& \foe  \int_{\oef} f_{\vareps} \cdot \left[\foe V e_n + U - \frac{x_n}{\vareps} \nabla_{\x} V \right] \psi  dx 
+ \foe \int_{\oes} g_{\vareps} \cdot \left[\foe V e_n + U - \frac{x_n}{\vareps} \nabla_{\x} V \right] \psi  dx .
\end{align*}
Integration with respect to time and using the convergence results from Proposition \ref{prop:conv_displacement}, \ref{prop:conv_fluid}, and \ref{prop:conv_pressure}, as well as the assumptions \ref{ass:f_eps} and \ref{ass:g_eps}, we obtain for $\vareps \to 0$ almost everywhere in $(0,T)$
\begin{align*}
-\int_{\Sigma} &p_0 \left[|Z_f| \nabla_{\x} \cdot U - d_n^f \Delta_{\x} V\right]  d\x
\\
&+ \int_{\Sigma}\int_{Z_s} A\left[ D_{\x}(\widetilde{u}_1) -y_n \nabla_{\x}^2 u_0^n + D_y(u_2)\right] : \left[D_{\x}(U) - y_n \nabla_{\x}^2 V\right]dy   d\x 
\\
=& \int_{\Sigma} \overline{f_1^n} V + f_0 \cdot \left[|Z_f| U - d_n^f \nabla_{\x} V\right] d\x +\int_{\Sigma} \overline{g_1^n} V + g_0 \cdot \left[|Z_s| U - d_n^s \nabla_{\x} V\right] d\x,
\end{align*}
where we used the notations (for $\ast \in \{s,f\}$)
\begin{align*}
d_n^{\ast} = \int_{Z_{\ast}} y_n dy, \quad \overline{f_1^n} := \int_{Z_f} f_1^n dy , \quad \overline{g_1^n} := \int_{Z_s} g_1^n dy.
\end{align*}
Using the representation from Proposition \ref{prop:corrector_u2} for $u_2$ we obtain after an elemental calculation (remember $\widetilde{u}_1^n = 0$)
\begin{align*}
\int_{\Sigma}&\int_{Z_s} A\left[ D_{\x}(\widetilde{u}_1) -y_n \nabla_{\x}^2 u_0^n + D_y(u_2)\right] : \left[D_{\x}(U) - y_n \nabla_{\x}^2 V\right]dy  d\x
\\
=& \int_{\Sigma} a^{\ast} D_{\x}(\widetilde{u}_1) : D_{\x} (U) + b^{\ast} \nabla_{\x}^2 u_0^n : D_{\x} (U) + b^{\ast} D_{\x}(\widetilde{u}_1) : \nabla_{\x}^2 V + c^{\ast} \nabla_{\x}^2 u_0^n : \nabla_{\x}^2 V d\x
\\
&+ \int_{\Sigma} p_0 B^1 : D_{\x}(U) + p_0 B^2 : \nabla_{\x}^2 V d\x.
\end{align*}
Altogether, we obtain the weak variation equation $\eqref{eq:weak_form_plate}$.
Hence, together with the zero boundary conditions of $\widetilde{u}_1$, $u_0^n$, and $\nabla_{\x} u_0^n$ on $\partial \Sigma$ and the zero initial conditions for $\widetilde{u}_1$ and $u_0^n$, we have shown that $(\widetilde{u}_1,u_0^n)$ is the weak solution of the plate equation $\eqref{eq:Macro_Model_elastic}$ - $\eqref{Macro_Model:plate_4th_order}$.

\subsection{Uniqueness for the macroscopic model}

To show uniqueness for the macroscopic model $\eqref{Macro_Model}$ it is enough to show that for zero data $f_0 = g_0 =0$ and $f_1^n = g_1^n =0$ we have $(p_0,\widetilde{u}_1, u_0^n) = 0$. For this we test the weak equations $\eqref{eq:weak_form_Darcy}$ and $\eqref{eq:weak_form_plate}$ with $p_0$ resp. $(\partial_t \widetilde{u}_1,\partial_t u_0^n)$ and add up the three equations to obtain
\begin{align*}
\frac12 \frac{d}{dt} \int_{\Sigma} \alpha^h p_0^2 &d\x + \int_{\Sigma} K\nabla_{\x} p_0  \cdot \nabla_{\x} p_0 d\x + \frac12 \frac{d}{dt} \int_{\Sigma} \bigg\{ a^{\ast} D_{\x}(\widetilde{u}_1) : D_{\x} (\widetilde{u}_1) 
\\
&+ b^{\ast} \nabla_{\x}^2 u_0^n : D_{\x} (\widetilde{u}_1) + b^{\ast} D_{\x}(\widetilde{u}_1) : \nabla_{\x}^2 u_0^n + c^{\ast} \nabla_{\x}^2 u_0^n : \nabla_{\x}^2 u_0^n\bigg\} d\x  = 0.
\end{align*}
Using $p_0(0) =0$, $\widetilde{u}_1(0)=0$, and $u_0^n(0)=0$, we obtain after integrating with respect to time  for almost every $t\in (0,T)$ with the positivity of $\alpha^h$ and $K$
\begin{align*}
c_0 \|p_0(t)\|_{L^2(\Sigma)}^2 + c_0& \|\nabla_{\x}  p_0 \|^2_{L^2((0,t)\times \Sigma)} + \frac12\int_{\Sigma} \bigg\{ a^{\ast} D_{\x}(\widetilde{u}_1) : D_{\x} (\widetilde{u}_1)
\\
 &+ b^{\ast} \nabla_{\x}^2 u_0^n : D_{\x} (\widetilde{u}_1) + b^{\ast} D_{\x}(\widetilde{u}_1) : \nabla_{\x}^2 u_0^n + c^{\ast} \nabla_{\x}^2 u_0^n : \nabla_{\x}^2 u_0^n \bigg\} d\x \le  0
\end{align*}
for a constant $c_0>0$.
Since the bilinear-form given by the integral is positive (see $\eqref{ineq:positivity_abc}$) we obtain (after a possible change of $c_0$)
\begin{align*}
c_0 \left(\|p_0(t)\|_{L^2(\Sigma)}^2 + \|\nabla_{\x} p_0 \|^2_{L^2((0,t)\times \Sigma)}  +  \|D_{\x}(\widetilde{u}_1)(t)\|_{L^2(\Sigma)}^2 + \|\nabla_{\x}^2 u_0(t)\|_{L^2(\Sigma)}^2 \right) \le 0.
\end{align*}
Hence, we immediately obtain $p_0 = 0$, and the zero-boundary condition on $\partial\Sigma$ for $u_0^n$, $\nabla_{\x}u_0^n$, and $\widetilde{u}_1$ implies $u_0^n=0$ and $\widetilde{u}_1=0$. This gives the uniqueness of the problem. In particular, all the convergence results obtained above are also valid for the whole sequence.

%
%
%

\appendix

\section{Properties of the space $\spaceV$}
\label{sec:two_pressure_decomp}
In this section we investigate the spaces $\spaceV $ and $\spaceV_0$ (Definitions see below), which are crucial for the derivation of the two-pressure Stokes model in Proposition \ref{prop:cell_equation_w1_Darcy_vel}. For the latter we need  two crucial properties: The two-pressure decomposition of $\spaceV^{\perp}_0$, see Lemma \ref{lem:two_pressure_decom}, and the density of smooth functions in $\spaceV$, see Lemma \ref{lem:density_V}. We emphasize that we formulate all the results in this section for time-independent functions. Since here time only acts as a paramter, the results are also valid for the time-dependent case.
\\
Let us define the spaces $\spaceV$ and $\spaceV_0$:
For a function $u \in L^2(\Sigma \times Z_f)$ we set 
\begin{align}\label{def:mean_Z_f}
\u := \int_{Z_f} u(\cdot_{\x} ,y) dy \in L^2(\Sigma).
\end{align}
We use the same definition for functions depending on time. Now, we define the space
\begin{align}
\begin{aligned}\label{def:spaceV}
\spaceV := \big\{ u \in L^2(\Sigma,& H^1_{\#}(Z_f,\Gamma))^n \, : 
\\
&\, \nabla_y \cdot u = 0 \, \mbox{ in } \Sigma \times Z_f,\, \, \nabla_{\x} \cdot \u \in L^2(\Sigma), \, \, \u \cdot \nu = 0 \mbox{ on } \partial_D \Sigma\big\}.
\end{aligned}
\end{align}
Here the normal trace on a part of the boundary of $\partial \Sigma$ has to be understood in the following sense:
For a function $w \in L^2(\Sigma)^{n-1}$ with $\nabla_{\x} \cdot w  \in L^2(\Sigma)$ we say $w \cdot \nu = 0$ on $\partial_D \Sigma$ if for all $\phi \in H^1(\Sigma,\partial_N \Sigma)$ it holds that
\begin{align}\label{def:normal_trace_zero_part_boundary}
\langle w \cdot \nu , \phi \rangle_{H^{-\frac12}(\partial\Sigma),H^{\frac12}(\partial\Sigma) } = 0.
\end{align}
On $\spaceV$ we consider the inner product
\begin{align*}
(u,v)_{\spaceV}:= (\nabla_y u,\nabla_y v)_{L^2(\Sigma\times Z_f)} + (\nabla_{\x} \cdot \u, \nabla_{\x} \cdot \v)_{L^2(\Sigma)}
\end{align*}
and denote the associated norm by $\|\cdot \|_{\spaceV}$. 
We define 
\begin{align*}
\spaceV_0:= \left\{u \in \spaceV \, : \, \nabla_{\x} \cdot \u = 0\right\}.
\end{align*}
For an arbitrary Banach space $X$ we define its annihilator by
\begin{align*}
X^{\perp}:= \left\{ F \in X' \, : \, F(x) = 0 \mbox{ for all } x \in X\right\}.
\end{align*}
Next we characterize the annihilator of $\spaceV_0$. Similar results for the pure Stokes problem in perforated domains (not thin) and a no-slip boundary condition on the interior oscillating surface can be found in \cite{allaire1997one} and \cite[Section 14]{chechkin2007homogenization}. A detailed proof for thin perforated domains and  zero normal flux boundary condition on $\Gamma$ for the fluid velocity can be found in \cite[Lemma 5.3 and Remark 5.5]{fabricius2023homogenization}. Here we follow the proof from \cite{fabricius2023homogenization}, where the only difference is the boundary condition on $\partial \Sigma$, hence we only give the basic ideas and point out the main difference.

Compared to the proof in \cite{fabricius2023homogenization}, here we make use of the following Helmholtz-decomposition, which is for sure known in the literature, but the author was not able to find a good reference, so for the sake of completeness we give a proof. 
\begin{lemma}\label{lem:Helmholtz_decomposition}
Let $\Omega \subset \R^n$ open and bounded with Lipschitz-boundary. Further, we assume for $\partial_N \Omega, \partial_D \Omega \subset \partial\Omega$ that $\partial \Omega = \overline{\partial_N \Omega} \cup \overline{\partial_D \Omega}$ and $\partial_N \Omega \cap \partial_D \Omega = \emptyset$, and that $\partial_N \Omega$ is open. Then it holds the following Helmholtz-decomposition:
\begin{align*}
L^2(\Omega)^n = L_{\sigma,D}^2(\Omega) \oplus \nabla H^1(\Omega,\partial_N \Omega )
\end{align*}
with $L_{\sigma,D}^2(\Omega) := \left\{v \in L^2(\Omega)^n \, : \, \nabla \cdot v = 0 , \, v\cdot \nu = 0 \mbox{ on } \partial_D \Omega\right\}.$
\end{lemma}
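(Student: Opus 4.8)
The plan is to produce the decomposition from a single scalar elliptic problem with mixed boundary conditions, and then identify the two summands. First I would record the orthogonality of the two spaces. For $v \in L_{\sigma,D}^2(\Omega)$ and $\phi \in H^1(\Omega,\partial_N \Omega)$, the generalized Green formula for vector fields with $L^2$-divergence on a Lipschitz domain gives $\int_\Omega v\cdot\nabla\phi\,dx = -\int_\Omega (\nabla\cdot v)\phi\,dx + \langle v\cdot\nu,\phi|_{\partial\Omega}\rangle_{H^{-1/2}(\partial\Omega),H^{1/2}(\partial\Omega)}$; the first term vanishes since $\nabla\cdot v=0$, and the boundary pairing vanishes since $\phi|_{\partial_N\Omega}=0$ while $v\cdot\nu=0$ on $\partial_D\Omega$ in the sense of \eqref{def:normal_trace_zero_part_boundary}. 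Hence $L_{\sigma,D}^2(\Omega)\perp\nabla H^1(\Omega,\partial_N\Omega)$ in $L^2(\Omega)^n$, which in particular yields that the sum is direct.

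For surjectivity, given $f\in L^2(\Omega)^n$ I would solve: find $\phi\in H^1(\Omega,\partial_N\Omega)$ with $\int_\Omega\nabla\phi\cdot\nabla\psi\,dx = \int_\Omega f\cdot\nabla\psi\,dx$ for all $\psi\in H^1(\Omega,\partial_N\Omega)$. Since $\partial_N\Omega$ is relatively open and (in the applications, cf. the standing assumptions on $\partial_N\Sigma$) nonempty, hence of positive surface measure, the Poincaré inequality $\|\psi\|_{L^2(\Omega)}\le C\|\nabla\psi\|_{L^2(\Omega)}$ holds on $H^1(\Omega,\partial_N\Omega)$, so this space is a Hilbert space for the gradient inner product and Lax–Milgram gives a unique $\phi$. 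Setting $v:=f-\nabla\phi\in L^2(\Omega)^n$, the variational identity reads exactly $\int_\Omega v\cdot\nabla\psi\,dx=0$ for all $\psi\in H^1(\Omega,\partial_N\Omega)$; testing against $\psi\in C_c^\infty(\Omega)$ gives $\nabla\cdot v=0$ in $\Omega$, and then the Green formula upgrades the identity to $\langle v\cdot\nu,\psi|_{\partial\Omega}\rangle_{H^{-1/2}(\partial\Omega),H^{1/2}(\partial\Omega)}=0$ for all $\psi\in H^1(\Omega,\partial_N\Omega)$, i.e.\ $v\cdot\nu=0$ on $\partial_D\Omega$ in the sense of \eqref{def:normal_trace_zero_part_boundary}. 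Thus $v\in L_{\sigma,D}^2(\Omega)$ and $f=v+\nabla\phi$, which is the claimed decomposition (orthogonal by the first step).

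The only genuinely delicate ingredient is the functional-analytic status of the normal trace in the mixed-boundary setting: one needs that for $v\in L^2(\Omega)^n$ with $\nabla\cdot v\in L^2(\Omega)$ the normal trace $v\cdot\nu$ is a well-defined element of $H^{-1/2}(\partial\Omega)$ and that $\int_\Omega v\cdot\nabla\psi\,dx+\int_\Omega(\nabla\cdot v)\psi\,dx = \langle v\cdot\nu,\psi|_{\partial\Omega}\rangle$ holds for all $\psi\in H^1(\Omega)$ on a Lipschitz domain; this is classical and I would simply cite it (e.g.\ Temam or Girault–Raviart). Once that is available, the remaining steps — orthogonality, Lax–Milgram on the gradient-inner-product Hilbert space, and reading off $\nabla\cdot v=0$ and the vanishing of $v\cdot\nu$ on $\partial_D\Omega$ from the single variational identity — are routine. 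In the degenerate case $\partial_N\Omega=\emptyset$ one instead works in $H^1(\Omega)/\R$ and uses the Poincaré–Wirtinger inequality, giving the classical no-flux Helmholtz decomposition; this case does not occur in the applications of the lemma.
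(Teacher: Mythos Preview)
Your argument is correct, but it proceeds differently from the paper. The paper starts from the closed subspace $L_{\sigma,D}^2(\Omega)$ and identifies its $L^2$-orthogonal complement via the closed range theorem applied to $\div: H^1(\Omega,\partial_D\Omega)^n\to L^2(\Omega)$: surjectivity of this divergence operator (a Bogovskii-type fact, using that $\partial_N\Omega$ is open) gives $N(\div)^\perp=R(\div^\ast)$, and since $N(\div)\subset L_{\sigma,D}^2(\Omega)$ any $w\perp L_{\sigma,D}^2(\Omega)$ is of the form $-\div^\ast p$ for some $p\in L^2(\Omega)$; unpacking this identity yields $w=\nabla p$ with $p=0$ on $\partial_N\Omega$.

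You instead start from the other summand: you show $\nabla H^1(\Omega,\partial_N\Omega)$ is closed (via Poincar\'e on $H^1(\Omega,\partial_N\Omega)$) and compute the orthogonal projection of an arbitrary $f$ onto it by solving a single scalar mixed Dirichlet--Neumann problem with Lax--Milgram, then read off that the remainder lies in $L_{\sigma,D}^2(\Omega)$ from the variational identity and the $H(\div)$ Green formula. Your route is more elementary and constructive --- no closed range theorem, no surjectivity of the divergence --- at the mild cost of invoking Poincar\'e on $H^1(\Omega,\partial_N\Omega)$, which indeed needs $|\partial_N\Omega|>0$ (you correctly flag this and dispose of the degenerate case $\partial_N\Omega=\emptyset$ by passing to $H^1(\Omega)/\R$). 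Both approaches rely on the same $H(\div)$ trace/Green formula for the orthogonality step, so the functional-analytic input is comparable.
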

\begin{proof}
We only give the proof for $\partial_N \Omega \neq \emptyset$ (this case can be found for example in \cite[Theorem III.1.1]{Galdi}). We have 
\begin{align*}
L^2(\Omega)^n = L_{\sigma,D}^2(\Omega)^{\perp} \oplus L_{\sigma,D}^2(\Omega).
\end{align*}
Hence, we have to show $L_{\sigma,D}^2(\Omega)^{\perp}  = \nabla H^1(\Omega,\partial_N \Omega)$. The inclusion $\supset$ follows directly from integration by parts. Let us assume $w \in L_{\sigma,D}^2(\Omega)^{\perp}$, this means
\begin{align}\label{eq:Helmholtz_aux}
(w,v)_{L^2(\Omega)} = 0 \quad\mbox{ for all } v \in L_{\sigma,D}^2(\Omega).
\end{align}
Consider the divergence operator $\div: H^1(\Omega,\partial_D \Omega)^n \rightarrow L^2(\Omega)$, which is surjective (here we use that $\partial_N \Omega$ is open) and therefore closed. Hence, from the closed range theorem we get for the adjoined $\div^{\ast}: L^2(\Omega) \rightarrow \left(H^1(\Omega,\partial_D \Omega)^n\right)^{\prime}$ that $N(\div)^{\perp} = R(\div^{\ast}).$
Since $N(\div) \subset L_{\sigma,D}^2(\Omega)$ (obviously the normal trace condition is fulfilled), we obtain from $\eqref{eq:Helmholtz_aux}$ that $w \in R(\div^{\ast})$. Hence, there exists $p \in L^2(\Omega)$ such that $w = -\div^{\ast} p$ and for all $v \in H^1(\Omega,\partial_D \Omega)^n$ it holds that
\begin{align*}
(w,v)_{L^2(\Omega)}  = - \langle \div^{\ast} p , v\rangle_{H^1(\Omega,\partial_D \Omega)',H^1(\Omega,\partial_D \Omega)} = - (p,\nabla \cdot v)_{L^2(\Omega)}.
\end{align*}
This implies by definition of the weak derivative $w = \nabla p$  (choosing $v $ smooth with compact support) and by integration by parts 
\begin{align*}
(p,v \cdot \nu)_{L^2(\partial\Omega)} = 0
\end{align*}
for all $v \in H^1(\Omega,\partial_D \Omega)^n$. Since $\partial_N \Omega$ is open in $\partial \Omega$ we get $p=0$ on $\partial_N \Omega$.
\end{proof}
Now, we are able to give the two-pressure decomposition of $\spaceV_0^{\perp}$:
\begin{lemma}\label{lem:two_pressure_decom}
It holds that 
\begin{align*}
\spaceV_0^{\perp} = \left\{ \nabla_{\x} p_0 + \nabla_y p_1  \in L^2(\Sigma, H_{\#}^1(Z_f,\Gamma)^n)' \, : \, p_0 \in H^1(\Sigma,\partial_N \Sigma),\, p_1 \in L^2(\Sigma \times Z_f)\right\}.
\end{align*}
The decomposition on the right-hand side is unique.
\end{lemma}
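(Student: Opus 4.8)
The plan is to show the two inclusions separately, with the inclusion ``$\supset$'' being elementary and the inclusion ``$\subset$'' requiring the Helmholtz-decomposition of Lemma \ref{lem:Helmholtz_decomposition} applied in two scales. For ``$\supset$'', given $p_0 \in H^1(\Sigma,\partial_N\Sigma)$ and $p_1 \in L^2(\Sigma\times Z_f)$, I would check directly that the functional $u \mapsto \int_\Sigma \nabla_{\x}p_0\cdot \u\, d\x + \int_\Sigma\int_{Z_f} p_1 \nabla_y\cdot u\, dy\, d\x$ annihilates $\spaceV_0$: for $u \in \spaceV_0$ one has $\nabla_y\cdot u = 0$, so the $p_1$-term vanishes pointwise in $\x$; and $\nabla_{\x}\cdot\u = 0$ together with $\u\cdot\nu = 0$ on $\partial_D\Sigma$ means, by the definition \eqref{def:normal_trace_zero_part_boundary} of the normal trace and density of $C_0^\infty(\overline\Sigma\setminus\partial_N\Sigma)$ in $H^1(\Sigma,\partial_N\Sigma)$ (cf.\ \cite{Bernard2011}), that $\int_\Sigma \nabla_{\x}p_0\cdot\u\,d\x = -\int_\Sigma p_0\,\nabla_{\x}\cdot\u\,d\x = 0$. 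Hence the functional lies in $\spaceV_0^\perp$.

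For the nontrivial inclusion ``$\subset$'', let $F \in \spaceV_0^\perp$. First I would identify $\spaceV_0$ as a closed subspace of $L^2(\Sigma,H^1_{\#}(Z_f,\Gamma))^n$, so that $F$ extends (Hahn--Banach) to a bounded functional on the ambient space, and then represent this ambient functional via the Riesz map: there is $g \in L^2(\Sigma, H^1_{\#}(Z_f,\Gamma))^n$ (or, more conveniently, a pair consisting of an $L^2$-part and a $\nabla_y$-part) with $F(u) = \int_\Sigma\int_{Z_f}\big( g_0\cdot u + G_1 : \nabla_y u\big)\,dy\,d\x$ for all $u$ in the ambient space. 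The condition $F\perp \spaceV_0$ then says this expression vanishes whenever $\nabla_y\cdot u = 0$ and $\nabla_{\x}\cdot\u = 0$ with $\u\cdot\nu=0$ on $\partial_D\Sigma$. Testing first with functions $u$ of the form $\phi(\x)\,z(y)$ where $z \in H^1_{\#}(Z_f,\Gamma)^n$ is $y$-divergence free and $\phi$ is scalar, and then with functions supported in the interior so that $\overline{u} = 0$, isolates the micro-scale: one concludes by the classical de Rham / Nečas argument in the fixed cell $Z_f$ (exactly as in \cite[Lemma 5.3]{fabricius2023homogenization}) that the micro-part of $F$ is $\nabla_y p_1$ for some $p_1 \in L^2(\Sigma\times Z_f)$, determined up to an additive function of $\x$ alone.

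After subtracting $\nabla_y p_1$, the remaining functional depends on $u$ only through the average $\u \in L^2(\Sigma)^{n-1}$ and annihilates every $\u$ arising from $u\in\spaceV_0$; by the surjectivity of the averaging map onto $\{w\in L^2(\Sigma)^{n-1} : \nabla_{\x}\cdot w = 0,\ w\cdot\nu = 0 \text{ on }\partial_D\Sigma\} = L^2_{\sigma,D}(\Sigma)$ (this uses that $\spaceV$ contains enough profiles, e.g.\ built from the Stokes cell solutions $q_i$ of \eqref{CellProblem_Stokes}), the reduced functional is an element of $L^2_{\sigma,D}(\Sigma)^\perp$ inside $L^2(\Sigma)^{n-1}$. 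Here I would invoke Lemma \ref{lem:Helmholtz_decomposition} with $\Omega=\Sigma$: its orthogonal complement is exactly $\nabla_{\x} H^1(\Sigma,\partial_N\Sigma)$, so the reduced functional equals $\nabla_{\x}p_0$ for a unique $p_0\in H^1(\Sigma,\partial_N\Sigma)$ (unique because $\partial_D\Sigma$ has positive measure, pinning the additive constant). Finally, uniqueness of the whole decomposition follows by applying the same reasoning to $\nabla_{\x}p_0 + \nabla_y p_1 = 0$: restricting to interior test functions forces $\nabla_y p_1 = 0$, i.e.\ $p_1 = p_1(\x)$, which can be absorbed, and then $\nabla_{\x}p_0 = 0$ with the boundary condition forces $p_0 = 0$.

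The main obstacle I anticipate is the careful bookkeeping at the interface between the two scales: one must verify that the averaging operator $u\mapsto\u$ maps $\spaceV_0$ \emph{onto} $L^2_{\sigma,D}(\Sigma)$ (not merely into it) so that the reduced functional genuinely lands in the annihilator to which Lemma \ref{lem:Helmholtz_decomposition} applies, and one must handle the normal-trace condition on $\partial_D\Sigma$ in the weak sense \eqref{def:normal_trace_zero_part_boundary} throughout. This surjectivity, together with the correct treatment of the additive $\x$-dependent ambiguity in $p_1$, is where the argument of \cite{fabricius2023homogenization} has to be adapted to the mixed lateral boundary condition, and it is the only place the specific geometry of $\Sigma$ and the splitting $\partial\Sigma = \overline{\partial_D\Sigma}\cup\overline{\partial_N\Sigma}$ enters.
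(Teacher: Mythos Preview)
Your overall strategy is sound and close in spirit to the paper's: both use a de Rham/closed-range argument for the micro-pressure $p_1$ and the Helmholtz decomposition of Lemma \ref{lem:Helmholtz_decomposition} for the macro-pressure $p_0$. The paper organizes things differently, however. Instead of a sequential subtraction it writes $\spaceV_0=\spaceV_1\cap\spaceV_2$ with $\spaceV_1=\{\nabla_y\cdot u=0\}$ and $\spaceV_2=\{\nabla_{\x}\cdot\u=0,\ \u\cdot\nu|_{\partial_D\Sigma}=0\}$, uses $\spaceV_0^\perp=\spaceV_1^\perp+\spaceV_2^\perp$, and characterizes each annihilator separately. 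For $\spaceV_2^\perp$ it does not pass through the averaging map $u\mapsto\u$ but rather constructs an explicit lifting $F:L^2(\Sigma)^{n-1}\to L^2(\Sigma,H_\#^1(Z_f,\Gamma))^n$, $F(\theta)=\bar h^{-1}\theta(\x)h(y)$ with $-\Delta_y h=1$, $h|_\Gamma=0$ and $n$-th component set to zero; Helmholtz is then applied to $G\circ F$, and one checks that both $G$ and $\nabla_{\x}p_0$ vanish on $R(F)^\perp$.

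Your sequential route has one genuine gap. You assert that after subtracting $\nabla_y p_1$ the remainder depends on $u$ only through $\u\in L^2(\Sigma)^{n-1}$, but the average has $n$ components and $\bar u_n$ is not a priori irrelevant. What rescues this is the geometric assumption $\partial Z_f\cap S^\pm=\emptyset$: integrating $u\cdot\nabla_y y_n$ by parts (the boundary terms vanish by $Y$-periodicity and $u|_\Gamma=0$) gives $\bar u_n=-\int_{Z_f}y_n\,\nabla_y\cdot u\,dy$ for every $u\in H_\#^1(Z_f,\Gamma)^n$. Consequently any functional of $\bar u_n$ is already of the form $\nabla_y(c(\x)y_n)$ and can be absorbed into $p_1$. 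Equivalently, in your de Rham step on $\{u:\u=0\}$ the pressure is determined only modulo $\mathrm{span}\{1,y_n\}$ (not just modulo constants), and this extra $y_n$-freedom resurfaces as the $\bar u_n$-dependence of the remainder. Once you insert this observation your argument closes; the paper's lifting map sidesteps the issue by construction, since it has zero $n$-th component and $R(F)^\perp\subset\spaceV_2$.
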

\begin{proof}
As mentioned above we only give a sketch of the proof and refer to \cite[Lemma 5.3]{fabricius2023homogenization} for the details. We define
\begin{align*}
\spaceV_1:=& \left\{\phi \in L^2(\Sigma,H_{\#}^1(Z_f,\Gamma))^n\, : \, \nabla_y \cdot \phi = 0 \mbox{ in } \Sigma \times Z_f\right\},
\\
\spaceV_2:=& \left\{\phi \in L^2(\Sigma,H_{\#}^1(Z_f,\Gamma))^n\, : \, \nabla_{\x} \cdot \overline{\phi} = 0,\, \overline{\phi}\cdot \nu = 0 \mbox{ on } \partial_D \Sigma\right\}.
\end{align*}
We have $\spaceV_0 = \spaceV_1 \cap \spaceV_2$ and therefore $\spaceV_0^{\perp} = \spaceV_1^{\perp} \oplus \spaceV_2^{\perp}$. The surjectivity of the divergence operator $\div: H_{\#}^1(Z_f,\Gamma)^n\rightarrow L^2(Z_f)/\R$ and the closed range theorem imply
\begin{align*}
\spaceV_1^{\perp} = \left\{ \nabla_y p_1 \, : \, p_1 \in L^2(\Sigma, L^2(Z_f)/\R)\right\}.
\end{align*}
It remains to show  $\spaceV_2^{\perp} = \left\{\nabla_{\x} p_0 \, : \, p_0 \in H^1(\Sigma,\partial_N \Sigma)\right\}.$
Let $h\in H_{\#}^1(Z_f,\Gamma)$ solve  $-\Delta_y h = 1$ in $Z_f$.
In particular, we have $\bar{h}:= \int_{Z_f} h dy >0$. We define the mapping $F:L^2(\Sigma)^{n-1} \rightarrow L^2(\Sigma,H^1_{\#}(Z_f,\Gamma))^n$ by $F(\theta):= \phi_{\theta}$ ($n$-th component equal zero) with 
\begin{align*}
\phi_{\theta}(\x,y):= \frac{1}{\bar{h}} \theta(\x) h(y).
\end{align*}
It is easy to check that $F$ is linear and bounded with closed range $R(F)$. Further, we  have (this follows from the equation for $h$)
\begin{align*}
R(F)^{\perp} = \left\{\psi \in L^2(\Sigma,H_{\#}^1(Z_f,\Gamma))^n \, : \, \int_{Z_f} \psi_i dy = 0 \mbox{ for } i=1,\ldots,n-1\right\}.
\end{align*}
Now, for $G \in \spaceV_2^{\perp} $ the operator $G\circ F$ is a bounded and linear operator from $L^2(\Sigma)^{n-1}$ to $\R$ and we can identify it with $g \in L^2(\Sigma)^{n-1}$. Using the Helmholtz-decomposition from Lemma \ref{lem:Helmholtz_decomposition} we obtain $g = g_0 + \nabla_{\x} p_0$ with $g_0 \in L_{\sigma,D}^2(\Sigma)$ and $p_0 \in H^1(\Omega,\partial_N \Sigma)$. Since for every $\theta \in L^2_{\sigma,D}(\Sigma)$ we have $F(\theta) = \phi_{\theta} \in \spaceV_2$ we have $G\circ F(\theta) = 0$ and therefore $g_0 = 0$. Hence, we have $G=\nabla_{\x} p_0$ on $R(F)$. Since both operators $G$ and $\nabla_{\x} p_0$ vanish on $R(F)^{\perp}$, we obtain $G= \nabla_{\x} p_0$ on $L^2(\Sigma,H_{\#}^1(Z_f,\Gamma))^n$ which finishes the proof.
\end{proof}

\subsection{A density result}

In the two-pressure Stokes model, or more precisely in its derivation, we use test-functions from the space $\spaceV$. However, for $\phi \in \spaceV$ the function $\phi\left(\cdot_{\x},\frac{\cdot_x}{\vareps}\right)$ is in general not measurable and therefore not an admissible test-function for the microscopic equation $\eqref{eq:micro_model}$. Hence, we approximate such functions by more regular functions. We introduce the subspace $\spaceV^{\infty}$ of $\spaceV$ defined by 
\begin{align*}
\spaceV^{\infty}:= \left\{ u \in C^{\infty}_0( \overline{\Sigma}\setminus \partial_D \Sigma , H^1_{\#}(Z_f,\Gamma))^n\, : \, \nabla_y \cdot u \mbox{ in } \Sigma \times Z_f\right\}.
\end{align*}
We have the following density result which is a special case of the more general density result we give below.
\begin{lemma}\label{lem:density_V}
The space $\spaceV^{\infty}$ is dense in $\spaceV$.
\end{lemma}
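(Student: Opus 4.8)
The idea is to split an arbitrary $u\in\spaceV$ into a part that carries its $Z_f$-average and a part with vanishing average, approximate the two separately, and recombine; all the genuinely two-scale content is in how the average part is lifted to a $\nabla_y$-divergence-free field.

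First I would record a structural fact: for every $u\in\spaceV$ one has $\int_{Z_f}u^n(\x,y)\,dy=0$ for a.e.\ $\x\in\Sigma$, hence $\overline{u}{}^n=0$. Indeed, testing $\nabla_y\cdot u(\x,\cdot)=0$ against $\varphi(y_n)$ for $\varphi\in C^\infty(\R)$ and integrating by parts over $Z_f$, the boundary contributions drop out (on $\Gamma$ because $u=0$ there, on the lateral faces of $Z$ by $Y$-periodicity of $u$ and of $Z_f$, and there is no contribution on $S^\pm$ since $\overline{Z_f}$ does not meet $S^\pm$), so $c\mapsto\int_{\{y_n=c\}\cap Z_f}u^n\,dy'$ is constant in $c$ and vanishes for $c$ outside a compact subinterval of $(-1,1)$, i.e.\ it vanishes identically. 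Next I would fix reference cell fields: with $q_j$ the Stokes cell solutions from $\eqref{CellProblem_Stokes}$ and $K$ the permeability tensor from $\eqref{def:permeability_tensor}$, testing the cell problem for $q_j$ with $q_l$ gives $\int_{Z_f}q_j\,dy=Ke_j$, so $Q_i:=\sum_{j=1}^{n-1}(K^{-1})_{ij}q_j\in H_{\#}^1(Z_f,\Gamma)^n$ satisfies $\nabla_y\cdot Q_i=0$, $Q_i=0$ on $\Gamma$, $Q_i$ is $Y$-periodic and $\int_{Z_f}Q_i\,dy=e_i$. Set $W:=\{w\in H_{\#}^1(Z_f,\Gamma)^n:\nabla_y\cdot w=0\}$ and $W_0:=\{w\in W:\int_{Z_f}w\,dy=0\}$; by the Poincaré inequality on $H_{\#}^1(Z_f,\Gamma)$ the gradient norm is equivalent to the $H^1$-norm on both, $W=W_0\oplus\mathrm{span}\{Q_1,\dots,Q_{n-1}\}$, and both spaces are separable Hilbert spaces.

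Given $u\in\spaceV$, write $u=u_0+R$ with $R(\x,y):=\sum_{i=1}^{n-1}\overline{u}{}^i(\x)\,Q_i(y)$ and $u_0:=u-R$. Then $R\in\spaceV$ with $\overline{R}=(\overline{u}{}^1,\dots,\overline{u}{}^{n-1},0)$, so $\nabla_{\x}\cdot\overline{R}=\nabla_{\x}\cdot\overline{u}\in L^2(\Sigma)$ and $\overline{R}\cdot\nu=\overline{u}\cdot\nu=0$ on $\partial_D\Sigma$; moreover $\overline{u_0}=0$ using the structural fact, hence $u_0(\x,\cdot)\in W_0$ for a.e.\ $\x$, i.e.\ $u_0\in L^2(\Sigma;W_0)$ with $\|u_0\|_{\spaceV}=\|\nabla_y u_0\|_{L^2(\Sigma\times Z_f)}$ equivalent to $\|u_0\|_{L^2(\Sigma;W_0)}$. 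For the zero-average part, $u_0$ is approximated in $L^2(\Sigma;W_0)$ by finite sums $\sum_k\phi_k(\x)w_k(y)$ with $\phi_k\in C_0^\infty(\Sigma)$ and $w_k\in W_0$ (standard density of $C_0^\infty(\Sigma)\otimes W_0$ in $L^2(\Sigma;W_0)$); each such sum lies in $\spaceV^\infty$ and has vanishing $\x$-average, so its $\spaceV$-norm is exactly its $L^2(\Sigma;W_0)$-norm. For $R$, I would approximate $\overline{u}'=(\overline{u}{}^1,\dots,\overline{u}{}^{n-1})$ in the norm of $H(\div;\Sigma)$ by $\theta_k\in C_0^\infty(\overline{\Sigma}\setminus\partial_D\Sigma)^{n-1}$; then $R_k:=\sum_{i=1}^{n-1}\theta_k^i Q_i\in\spaceV^\infty$, and $R_k\to R$ in $\spaceV$ since $\nabla_y(R_k-R)=\sum_i(\theta_k^i-\overline{u}{}^i)\nabla_y Q_i$ is controlled by $\|\theta_k-\overline{u}'\|_{L^2(\Sigma)}$ and $\nabla_{\x}\cdot\overline{(R_k-R)}=\nabla_{\x}\cdot(\theta_k-\overline{u}')\to0$. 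Adding the two approximants yields a sequence in $\spaceV^\infty$ converging to $u$ in $\spaceV$.

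The main obstacle is the last ingredient used above: the density of smooth fields vanishing near $\partial_D\Sigma$ in the space of $H(\div;\Sigma)$-fields with zero normal trace on $\partial_D\Sigma$. Away from the "edge" $\overline{\partial_D\Sigma}\cap\overline{\partial_N\Sigma}$ this is the routine extension-by-zero/translation/mollification argument; the point is that for the box $\Sigma=(a,b)$ with $\partial_D\Sigma$ a union of coordinate faces this edge is a finite union of codimension-two coordinate subspaces, so the argument goes through without a delicate local analysis there — this is precisely where the special structure of $\Sigma$ and $\partial_D\Sigma$ enters, and it is the $H(\div)$-analogue of the $H^1$-density up to part of the boundary used earlier in the paper (cf.\ \cite{Bernard2011}). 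This also explains why the statement is phrased as a special case of a more general density result: the same splitting and lifting works verbatim once the corresponding trace/density statement is available.
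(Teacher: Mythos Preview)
Your argument is correct, but it takes a genuinely different route from the paper. The paper deduces Lemma~\ref{lem:density_V} as a special case of a general density result (Lemma~\ref{lem:Density_general}) for $X(U)$-valued functions in the space $\spaceW$: there one works directly with $u$, extends by zero across $\partial_D\Sigma$, applies an anisotropic Piola-type rescaling $u^{\rho}$ (stretching across $\partial_D\Sigma$, contracting across $\partial_N\Sigma$) so that the support moves away from $\partial_D\Sigma$ while $\Sigma$ stays inside the rescaled domain, and then mollifies in $\x$. The Piola structure guarantees that $\nabla_{\x}\cdot\overline{u^{\rho}}$ is again a rescaling of $\nabla_{\x}\cdot\overline{u}$, so the $\spaceW$-norm is controlled throughout. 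No splitting into average and zero-average parts is used, and no properties of $Z_f$ or the Stokes cell solutions enter.

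Your decomposition $u=u_0+R$ via the lifting fields $Q_i$ built from the Stokes cell solutions is elegant and makes the two-scale structure transparent: the zero-average piece is handled by the trivial density of $C_0^{\infty}(\Sigma)\otimes W_0$ in $L^2(\Sigma;W_0)$, and all the difficulty is pushed into approximating $\overline{u}'$ in the mixed-boundary space $\{v\in H(\div;\Sigma):v\cdot\nu=0 \text{ on }\partial_D\Sigma\}$ by smooth fields vanishing near $\partial_D\Sigma$. But note that this last step, which you correctly flag as the main obstacle, is precisely the special case $X(U)=\R$ of the paper's Lemma~\ref{lem:Density_general}, and its proof requires exactly the anisotropic scaling/mollification machinery you only sketch. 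So while your reduction is valid and clarifies that the only nontrivial content is an $H(\div)$-density with mixed boundary conditions on the box $\Sigma$, it does not avoid that work; the paper's approach simply carries out that work once, in a form general enough to swallow the $y$-dependence without needing your splitting, the structural fact $\overline{u}^n=0$, or the invertibility of $K$.
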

\begin{proof}
See Lemma \ref{lem:Density_general} below.
\end{proof}

In the following we denote for an open and bounded subset $U\subset \R^n$ by $X(U)$ a Banach space of functions $u:U\rightarrow \R$, such that $X(U) \subset L^1(U)$.  In particular, the mean value is well-defined. For a function $u:\Sigma \rightarrow X(U)$ we denote its mean value by
\begin{align*}
\u(x):= \int_U u(x,y) dy.
\end{align*}
We show the density of $C_0^{\infty}\left( \overline{\Sigma} \setminus \partial_D \Sigma, X(U)\right)$ in 
\begin{align*}
\spaceW := \left\{ u \in L^2(\Sigma,X(U))\, : \, \nabla_{\x} \cdot \u \in L^2(\Sigma) , \, \u \cdot \nu = 0 \, \mbox{ on } \partial_D \Sigma \right\},
\end{align*}
with respect to the norm
\begin{align*}
\|u\|_{\spaceW}^2 := \|u\|_{L^2(\Sigma,X(U))}^2 + \|\nabla_{\x} \cdot \u \|_{L^2(\Sigma)}^2. 
\end{align*}
As an immediate consequence we obtain the density of $\spaceV^{\infty}$ in $\spaceV$.
For the proof we use the convolution which commutes with the mean value operator. We follow the usual ideas for the density of smooth functions with suitable boundary conditions in the $H(\div)$-spaces (also with suitable boundary conditions), see for example \cite{Galdi}. In our case, a crucial point is the mixed boundary condition on $\partial \Sigma$ for the space $\spaceW$. On $\partial_D \Sigma$ we have a normal-trace zero boundary condition and on $\partial_N \Sigma$ we have arbitrary boundary conditions. For similar results in the scalar valued case $X(U) = \R$ and curved domains we refer to \cite{bauer2016maxwell}.

\begin{lemma}\label{lem:Density_general}
$C_0^{\infty}\left( \overline{\Sigma} \setminus \Gamma_D, X(U)\right)$ is dense in $\spaceW$.
\end{lemma}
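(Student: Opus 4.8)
The plan is to follow the classical scheme for density of smooth functions in $H(\div)$-type spaces, adapted to the Banach-space-valued setting and the mixed boundary condition on $\partial\Sigma$. The key observation, used throughout, is that for $u \in L^2(\Sigma, X(U))$ the mean value $\u \in L^2(\Sigma)^{n-1}$ and convolution in the $\x$-variable commutes with the mean-value operator $\int_U \cdot \, dy$, so mollifying $u$ in $\x$ mollifies $\u$ in exactly the same way; hence $\nabla_{\x}\cdot\overline{(\rho_\delta * u)} = \rho_\delta * (\nabla_{\x}\cdot\u)$ and the $\spaceW$-norm is controlled. Since $\mathbb{Q}^{n-1} \ni a,b$ and $\Sigma=(a,b)$ is a box, the geometry is simple: $\partial_D\Sigma$ and $\partial_N\Sigma$ are unions of (closed, relatively open respectively) faces, which makes the localization and translation arguments below clean.

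First I would reduce to a local statement via a partition of unity subordinate to a finite open cover of $\overline{\Sigma}$: one open set $U_0 \Subset \Sigma$ away from the boundary, finitely many sets covering (the interior of) faces contained in $\overline{\partial_D\Sigma}$, finitely many covering faces contained in $\partial_N\Sigma$, and finitely many covering the lower-dimensional edges where faces meet. For the interior piece, mollification in $\x$ alone gives $C^\infty_0(\Sigma, X(U))$ approximation, using that $C^\infty_0(\R^{n-1})\otimes X(U)$ is dense in the relevant Bochner space and that convolution converges in $L^2(\Sigma,X(U))$. For a piece near an interior point of a Dirichlet face, say $\{\x_i = a_i\}$ with $i \in I_a$, I would first translate $u$ inward in the $\x_i$-direction by a small amount $\tau>0$, so that the shifted function $u(\cdot + \tau e_i, \cdot)$ is defined (and smooth-able) in a neighborhood of the face; the normal-trace-zero condition $\u\cdot\nu = 0$ on $\partial_D\Sigma$ is what guarantees that the shift plus subsequent mollification still yields functions whose mean value has vanishing normal trace there (this is the standard argument: the shifted-and-mollified function is compactly supported away from $\{\x_i=a_i\}$, hence trivially has zero normal trace, and it converges to $u$ in $\spaceW$ as $\tau,\delta\to 0$ in the right order because of the trace condition — one checks this by integration by parts against test functions $\phi \in H^1(\Sigma,\partial_N\Sigma)$ using \eqref{def:normal_trace_zero_part_boundary}). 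For a piece near $\partial_N\Sigma$ one translates \emph{outward}, which is unconditionally allowed since no boundary constraint is imposed there, and again mollifies; the support then stays away from the closure of $\partial_N\Sigma$. The edge pieces are handled by combining translations in the relevant coordinate directions simultaneously (inward for Dirichlet directions, outward for Neumann directions), which is consistent precisely because $\Sigma$ is a box so the directions are orthogonal.

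Finally I would glue: given $u\in\spaceW$ and $\eta>0$, write $u = \sum_k \zeta_k u$ with $\zeta_k$ the partition of unity; approximate each $\zeta_k u$ within $\eta$ in $\spaceW$ by some $u_k^\eta \in C_0^\infty(\overline{\Sigma}\setminus\partial_D\Sigma, X(U))$ using the appropriate local construction above (noting $\nabla_{\x}\cdot\overline{\zeta_k u} = \zeta_k \nabla_{\x}\cdot\u + \nabla_{\x}\zeta_k \cdot \u \in L^2(\Sigma)$ and that multiplication by the smooth cutoff $\zeta_k$ preserves the normal-trace condition on the faces), and set $u^\eta := \sum_k u_k^\eta$. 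Then $u^\eta \in C_0^\infty(\overline{\Sigma}\setminus\partial_D\Sigma, X(U))$ and $\|u - u^\eta\|_{\spaceW} \le \sum_k \eta = C\eta$. The main obstacle is the careful bookkeeping at the Dirichlet boundary and the edges: one must verify that the translate-then-mollify procedure genuinely converges in the $\spaceW$-norm — the $L^2(\Sigma,X(U))$-part is routine (strong continuity of translations), but the $\nabla_{\x}\cdot$-part requires that the normal-trace-zero hypothesis be used to control the boundary layer created by the inward shift, and one has to fix the order of limits ($\delta \ll \tau \to 0$) so that the mollified function's support clears the Dirichlet face before $\tau$ is sent to zero. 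For the $\spaceV^\infty \subset \spaceV$ consequence one takes $U = Z_f$, $X(Z_f) = H^1_{\#}(Z_f,\Gamma)$ and notes that the constraint $\nabla_y\cdot u = 0$ is linear and closed, preserved by $\x$-convolution and by multiplication by $\x$-cutoffs, so the approximants produced above automatically lie in $\spaceV^\infty$.
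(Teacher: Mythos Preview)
Your partition-of-unity-plus-local-translation scheme is a valid alternative to the paper's proof, which takes a different, more global route: the paper extends $u$ by zero across $\partial_D\Sigma$ to the infinite strip $\Sigma_\infty=\R\times(-1,1)$ (the normal-trace-zero condition making $\u$ stay in $H(\div,\Sigma_\infty)$), then applies a single anisotropic Piola-type rescaling $u^\rho(\x,y)=(\rho_1^{-1}u_1(\rho_1 x_1,\rho_2 x_2,y),\,\rho_2^{-1}u_2(\rho_1 x_1,\rho_2 x_2,y))$ with $\rho_1>1$ (contracting the support away from the Dirichlet faces) and $0<\rho_2<1$ (stretching in the Neumann direction to create room), and finally mollifies. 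The Piola structure guarantees $\nabla_{\x}\cdot\overline{u^\rho}=(\nabla_{\x}\cdot\u)(\rho_1 x_1,\rho_2 x_2)$, so convergence of the divergence part is immediate and no partition of unity or edge analysis is needed. Your localization approach is closer to standard $H(\div)$ density arguments and would adapt more readily to non-box domains; the paper's global scaling is slicker here precisely because $\Sigma$ is a product.

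That said, your handling of the Dirichlet face contains a genuine error. You write the shift as $u(\cdot+\tau e_i,\cdot)$ and then assert the shifted-and-mollified function is compactly supported away from $\{\x_i=a_i\}$; this is false. Evaluating $u$ at inward points yields a function defined in a neighborhood of the face but with no reason to vanish there, and mollification will not create a gap. The correct local step (which the paper also performs, in global form) is to extend $u$ by zero across the Dirichlet face first---this is exactly where the hypothesis $\u\cdot\nu=0$ on $\partial_D\Sigma$ enters, ensuring the zero extension of $\u$ remains in $H(\div)$---and then translate the graph inward, i.e.\ take $\tilde u(\cdot-\tau e_i,\cdot)$, so that the support lies in $\{\x_i\ge a_i+\tau\}$; mollifying with radius $\delta<\tau$ then gives the compact support you claim. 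Your later remarks about the order $\delta\ll\tau$ and about checking convergence via \eqref{def:normal_trace_zero_part_boundary} are correct once this is repaired. A minor slip in the same spirit: near $\partial_N\Sigma$ you do not want the support to stay away from $\partial_N\Sigma$---you only need smoothness up to it, which the outward translation plus mollification already delivers.
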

\begin{proof}
For the sake of simplicity we consider only the case $\Sigma:= (-1,1)^2 $ with
\begin{align*}
\partial_D \Sigma := \{\pm 1\} \times (-1,1),\quad \partial_N \Sigma := (-1,1) \times \{\pm 1\}.
\end{align*}
In the following we use the short notation 
\begin{align*}
\langle \cdot , \cdot \rangle := \langle \cdot ,\cdot \rangle_{H^{-\frac12}(\partial \Sigma),H^{\frac12}(\partial\Sigma)}.
\end{align*}
Let $u\in \spaceW$. We extend the function $u$ by zero to $\Sigma_{\infty}:= \R\times (-1,1)$ and denote this function again by $u$. We have
\begin{align*}
\u\in H(\div,\Sigma_{\infty}):= \left\{ v \in L^2(\Sigma)^2 \, : \,  \nabla_{\x} \cdot v \in L^2(\Sigma)\right\}.
\end{align*}
In fact, let $\phi \in C_0^{\infty}(\Sigma_{\infty})$. Then we have
\begin{align*}
\int_{\Sigma_{\infty}} \nabla_{\x} \phi \cdot \u d\x &= \int_{\Sigma_{\infty} \setminus \Sigma} \nabla_{\x} \phi \cdot \underbrace{\u}_{=0} d\x + \int_{\Sigma} \nabla_{\x} \phi \cdot \u d\x
\\
&= -\int_{\Sigma} \phi \nabla_{\x} \cdot \u d\x +  \langle \u \cdot \nu , \phi \rangle
= - \int_{\Sigma_{\infty}} \phi \widetilde{\nabla_{\x} \cdot \u} d\x
\end{align*}
with the zero extension $\widetilde{\nabla_{\x} \cdot \u} \in L^2(\Sigma_{\infty})$ of $\nabla_{\x} \cdot \u$ to $\Sigma_{\infty}$. We emphasize that the boundary term in the calculation above vanishes, since $\phi = 0$ on $\partial_N \Sigma$. This implies $\u \in H(\div,\Sigma_{\infty})$. 
Next, we choose $\rho:= (\rho_1,\rho_2) $ with $\rho_1 > 1$ and $0< \rho_2 < 1$. Later we consider $\rho \rightarrow (1,1)$. We define
\begin{align*}
\Sigma_{\infty}^{\rho_2} := \R \times \left(-\frac{1}{\rho_2},\frac{1}{\rho_2}\right) \supset \Sigma_{\infty} \supset \Sigma,
\end{align*}
and $u^{\rho} : \Sigma_{\infty}^{\rho_2} \times U \rightarrow \R^2$ for almost every $(\x,y) \in \Sigma_{\infty}^{\rho_2} \times U$ by (similar to Piola transformation)
\begin{align*}
u^{\rho}(\x,y) := \left(
\rho_1^{-1} u_1 (\rho_1 x_1, \rho_2 x_2,y) ,
\rho_2^{-1} u_2 (\rho_1 x_1 , \rho_2 x_2,y)  \right)^t
\end{align*}
Obviously, we have
\begin{align}\label{eq:identity_u_rho}
\overline{u^{\rho}}(\x) = \u^{\rho}(\x) := \left( \rho_1^{-1}  \u_1 (\rho_1 x_1, \rho_2 x_2)  , \rho_2^{-1} \u_2 (\rho_1 x_1 , \rho_2 x_2) \right)^t.
\end{align}
Since the Piola transformation preserves the weak divergence, we get $\u^{\rho} \in H(\div,\Sigma_{\infty}^{\rho_2})$ with (for almost every $\x \in \Sigma_{\infty}^{\rho_2}$)
\begin{align}\label{eq:identity_div_u_rho}
\nabla_{\x} \cdot \u^{\rho}(\x) = (\nabla_{\x} \cdot \u)^{\rho}(\x) := \nabla_{\x} \cdot \u \left(\rho_1 x_1 , \rho_2 x_2\right).
\end{align}
Now, for $0 < \vareps \ll 1$ and $v \in L^2(\R^2,Z)^m$ with $m \in \N$ and a Banach space $Z$,
we consider the convolution (with respect to $\x$) for $\x \in \R^2$ 
\begin{align*}
v (\x):= \int_{\R^2} \phi_{\vareps}(\x - \z ) v(\z) d\z
\end{align*}
with a smooth convolution kernel $\phi_{\vareps} \in C_0^{\infty}(\R^2)$ with $\phieps \geq 0$, $\mathrm{supp}(\phieps) \subset B_{\vareps}(0)$ and $\int_{\R^2} \phieps d\x = 1$. In the following, if not stated otherwise, we extend all functions by zero to $\R^2$.

For $\vareps < \rho_1 - 1$ we have $\left(u^{\rho}\right)_{\varepsilon}  \in C_0^{\infty}(\overline{\Sigma}\setminus \partial_D \Sigma, X(U))$, since $\rho_1>1$ and $u$ is zero in $(\R\times (-1,1)) \setminus \overline{\Sigma}$.
Now, we have
\begin{align*}
\left\| u -\left(u^{\rho}\right)_{\varepsilon} \right\|_{L^2(\Sigma,X(U))} &\le \|u - u^{\rho} \|_{L^2(\Sigma,X(U))} + \left\|u^{\rho} - \left(u^{\rho}\right)_{\vareps} \right\|_{L^2(\Sigma,X(U))}
\end{align*}
By approximating $u$ with continuous functions the first term tends to zero for $\rho \to (1,1)$. The second term tends to zero (for fixed $\rho$) for $\vareps \to 0$, due to the properties of the convolution. It remains to estimate the norm of the divergence of the mean value: First of all, we notice that the convolution commutes with the mean value, more precisely, we have $ \overline{\left(u^{\rho}\right)_{\vareps}} = \left( \u^{\rho}\right)_{\vareps}$ (see also $\eqref{eq:identity_u_rho}$)
Hence, we obtain in $\Sigma$ (we emphasize that $\rho_2>1$)
\begin{align*}
\nabla_{\x} \cdot \overline{\left(u^{\rho}\right)_{\vareps}} = \left(\nabla_{\x} \cdot \u^{\rho} \right)_{\vareps}.
\end{align*}
With this we get
\begin{align*}
\left\|\nabla_{\x} \cdot \u - \nabla_{\x} \cdot \overline{\left(u^{\rho}\right)_{\vareps}} \right\|_{L^2(\Sigma)} \le \|\nabla_{\x} \cdot \u - \nabla_{\x} \cdot \u^{\rho}\|_{L^2(\Sigma)} + \left\|\nabla_{\x} \cdot \u^{\rho} - \left(\nabla_{\x} \cdot \u^{\rho}\right)_{\vareps} \right\|_{L^2(\Sigma)}.
\end{align*}
Using $\eqref{eq:identity_div_u_rho}$ and similar arguments as above (approximating $\nabla_{\x} \cdot \u$ with continuous functions), the first term vanishes for $\rho \to (1,1)$, and the second term for $\vareps \to 0$. This gives the desired result.
\end{proof}

\section{Bogovskii-operator for thin perforated domains}
\label{sec:Bogovskii}
Here we define the Bogovskii-operator for thin perforated domains and give $\vareps$-uniform bounds for its norm. For this we make use of the restriction operator, see \cite[Appendix]{SanchezPalencia1980} and \cite{allaire1989homogenization} for perforated domains, and for thin perforated domains in \cite{fabricius2023homogenization}.

\begin{lemma}\label{lem:Restriction_Operator}
There exists a linear operator
\[
R_\vareps\colon H^1(\oeps,S_\vareps \cup \partial_D \oeps)^n \rightarrow H^1(\oef, \geps \cup \partial_D \oef)^n
\]
such that for all $\veps \in H^1(\oeps,S_\vareps \cup \partial_D \oeps)^n$ we have:
\begin{enumerate}[label = (\alph*)]
 \item For $k\in K_{\vareps}$ it holds almost everyhwhere in $\vareps (Z + k)$
 \begin{align}\label{eq:restriction_operator}
 \nabla \cdot R_{\vareps} \veps = \nabla \cdot \veps + \frac{1}{|\vareps Z_f|} \int_{\vareps (Z_s + k)} \nabla \cdot \veps dz.
 \end{align}
%
    \item It holds that
    \begin{align*}
        \Vert R_\vareps \veps \Vert_{L^2(\oef)} + \varepsilon \Vert \nabla R_\vareps \veps \Vert_{L^2(\oef)} \le C\vareps \Vert \nabla \veps \Vert_{L^2(\oeps)}.
    \end{align*}
\end{enumerate}

\end{lemma}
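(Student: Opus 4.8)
The plan is to construct the restriction operator cell-by-cell, patching together local operators on each translated-and-scaled reference cell $\vareps(Z+k)$, and then to rescale the whole construction to the fixed cell $Z$ to obtain $\vareps$-uniform bounds. First I would recall (or construct once and for all on the fixed geometry) a local restriction operator $R\colon H^1(Z,S)^n \to H^1(Z_f,\Gamma)^n$ with the property that $R\phi = \phi$ near $\Gamma$ in a way that makes $R\phi$ vanish on $\Gamma$, that $R\phi$ has zero trace on $\partial_D$-type lateral faces whenever $\phi$ does, and — crucially — that it satisfies the divergence-correction identity
\begin{align*}
\nabla \cdot (R\phi) = \nabla \cdot \phi + \frac{1}{|Z_f|}\int_{Z_s} \nabla \cdot \phi \, dz \qquad \text{in } Z_f,
\end{align*}
together with $\|R\phi\|_{H^1(Z_f)} \le C\|\phi\|_{H^1(Z)}$. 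Such an operator is the standard Tartar/Sánchez-Palencia restriction operator for a single perforated cell; its existence follows from solving an auxiliary Stokes-type problem on $Z_f$ to absorb the flux defect through $\Gamma$, using that $Z_f$ is connected with Lipschitz boundary and that $S^\pm \cap \partial Z_f = \emptyset$ so that $\Gamma$ carries the full ``inner'' boundary.

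Next I would transfer $R$ to each scaled cell: for $k \in K_\vareps$ and $\phi$ defined on $\vareps(Z+k)$, set $\phi^{(k)}(z) := \phi(\vareps z + \vareps k)$ on $Z$, apply $R$, and define $R_\vareps\phi(x) := (R\phi^{(k)})\big(\tfrac{x}{\vareps} - k\big)$ on $\vareps(Z_f+k)$. A change of variables shows $\nabla\cdot(R_\vareps\phi)(x) = \tfrac1\vareps (\nabla\cdot R\phi^{(k)})(\tfrac x\vareps - k)$, and combined with the local divergence identity plus $\int_{\vareps(Z_s+k)} \nabla\cdot\phi\,dz = \vareps^{n}\int_{Z_s}(\nabla\cdot\phi^{(k)})\vareps^{-1}\,dz$ this yields exactly $\eqref{eq:restriction_operator}$ after bookkeeping the Jacobian factors. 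The boundary conditions on $S_\vareps$ and $\partial_D\oeps$ are preserved face-by-face because $R$ preserves them on the reference cell, and since adjacent cells share the periodic lateral faces and $R$ reproduces $\phi$ near those faces (by $Y$-periodicity of $\partial Z_f$), the pieces $R_\vareps\phi|_{\vareps(Z_f+k)}$ glue together to a global $H^1(\oef)^n$ function vanishing on $\geps \cup \partial_D\oef$.

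For the norm bound (b), I would scale the fixed-cell estimate $\|R\phi^{(k)}\|_{L^2(Z_f)} + \|\nabla R\phi^{(k)}\|_{L^2(Z_f)} \le C\|\nabla\phi^{(k)}\|_{L^2(Z)}$ — note the right-hand side can be taken as just the gradient norm, using a Poincaré inequality on $Z$ valid because $\phi^{(k)}$ vanishes on part of $\partial Z$ (either $S$ or a $\partial_D$-face); this is where the assumption $H^1(\oeps,S_\vareps\cup\partial_D\oeps)$ rather than plain $H^1$ enters. Undoing the scaling ($\|\cdot\|_{L^2}$ picks up $\vareps^{n/2}$, each gradient picks up $\vareps^{-1}$, summing over the $O(\vareps^{-(n-1)})$ cells in $K_\vareps$) gives $\|R_\vareps\phi\|_{L^2(\oef)} \le C\vareps\|\nabla\phi\|_{L^2(\oeps)}$ and $\|\nabla R_\vareps\phi\|_{L^2(\oef)} \le C\|\nabla\phi\|_{L^2(\oeps)}$, which is the claim. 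The main obstacle I anticipate is verifying that the local operator $R$ can be chosen to simultaneously (i) satisfy the exact divergence identity, (ii) map into $H^1(Z_f,\Gamma)$ with the correct zero trace on $\Gamma$, and (iii) be compatible with the mixed Dirichlet/Neumann lateral conditions so that the glued global operator lands in the right space; one must be slightly careful that the flux defect $\int_{Z_s}\nabla\cdot\phi$ is absorbed through $\Gamma$ only (not through $S$ or the lateral faces), which is exactly why one solves the auxiliary problem with homogeneous conditions on $S$ and appeals to $S^\pm\cap\partial Z_f=\emptyset$. The remaining scaling arguments are routine bookkeeping.
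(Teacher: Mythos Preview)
Your proposal is correct and follows exactly the cell-by-cell Tartar/Allaire construction that the paper invokes by citing \cite{fabricius2023homogenization}; the paper's own proof consists only of this citation together with the remark that the local restriction operator preserves the additional lateral Dirichlet condition on $\partial_D\oeps$, which you also address. Your sketch supplies the details (the scaling bookkeeping for the divergence identity and norm bounds, the gluing argument via $R\phi=\phi$ near the lateral faces of $Z_f$, and the Poincar\'e inequality on $Z$ using the zero trace on $S$) that the paper leaves to the reference.
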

\begin{proof}
This result was shown for slightly different boundary conditions in \cite[Proposition 4.5]{fabricius2023homogenization}. More precisely, it was shown the existence of $R_{\vareps}: H^1(\oeps,S_{\vareps})^n\rightarrow H^1(\oef, \geps)^n$ with the desired property, where for $\eqref{eq:restriction_operator}$ we refer to the property (iii)  of the local restriction operator in the proof of \cite[Proposition 4.5]{fabricius2023homogenization}. From property (iii) for the local restriction operator in the proof, we get that $R_{\vareps}$ restricted to $H^1(\oeps,S_{\vareps} \cup  \partial_D \oeps)^n$ maps to $H^1(\oef, \geps \cup \partial_D \oef)^n$, which gives the desired result.
\end{proof}
As a direct consequence we obtain the following Bogovskii-operator. A similar result was shown in \cite[Theorem 2.2]{fabricius2022pressure} where the interface $\geps$ was given as a graph. Here we show the general result.
\begin{corollary}\label{cor:Bogovskii}
For every $f_{\vareps} \in L^2(\oef)$ there exists $\phieps \in H^1(\oef, \geps \cup \partial_D \oef)^n$ such that
\begin{align*}
\nabla \cdot \phieps = f_{\vareps}, \qquad \|\phieps\|_{L^2(\oef)} + \vareps \|\nabla \phieps\|_{L^2(\oef)} \le C \|f_{\vareps}\|_{L^2(\oef)}
\end{align*}
for a constant $C>0$ independent of $\vareps$. In other words, there exists a linear operator $B_{\vareps}: L^2(\oef) \rightarrow H^1(\oef,\geps \cup \partial_D \oef)^n$ such that $\nabla \cdot B_{\vareps} f_{\vareps} = f_{\vareps}$ and the operator norm of $B_{\vareps}$ fulfills $\|B_{\vareps}\|\le C \vareps^{-1}$.
\end{corollary}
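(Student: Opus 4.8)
The plan is to reduce the divergence problem on the perforated fluid domain $\oef$ to one on the full thin layer $\oeps$, which has the simple slab geometry $\Sigma\times(-\vareps,\vareps)$, and then to transfer the result to $\oef$ by the restriction operator $R_\vareps$ of Lemma \ref{lem:Restriction_Operator}. The point that makes this work cleanly is that the divergence-correction term in \eqref{eq:restriction_operator} disappears as soon as the field whose divergence we prescribe has vanishing divergence on every solid cell; this is arranged by extending $f_\vareps$ by zero onto $\oes$.

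\emph{Step 1 (a Bogovskii operator on the slab $\oeps$ with the right $\vareps$-scaling).} For $F_\vareps := \chi_{\oef} f_\vareps \in L^2(\oeps)$ I would produce $v_\vareps \in H^1(\oeps, S_{\vareps}\cup\partial_D\oeps)^n$ with $\nabla\cdot v_\vareps = F_\vareps$ in $\oeps$ and $\|v_\vareps\|_{L^2(\oeps)} + \vareps\|\nabla v_\vareps\|_{L^2(\oeps)} \le C\|F_\vareps\|_{L^2(\oeps)}$. On the fixed reference slab $\Omega^1:=\Sigma\times(-1,1)$ with $\Gamma_0:= (\Sigma\times\{\pm1\})\cup(\partial_D\Sigma\times(-1,1))$ the divergence operator $\div\colon H^1(\Omega^1,\Gamma_0)^n\to L^2(\Omega^1)$ is surjective — exactly the surjectivity used in the proof of Lemma \ref{lem:Helmholtz_decomposition}, and no mean-value condition on $F_\vareps$ is needed precisely because $\partial_N\Sigma\neq\emptyset$ so that $\Gamma_0\neq\partial\Omega^1$ — hence it has a bounded right inverse $\mathcal B$. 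I would then transfer $\mathcal B$ to $\oeps$ by the anisotropic rescaling $x_n\mapsto x_n/\vareps$: with $G(\x,z):=F_\vareps(\x,\vareps z)$, $\psi:=\mathcal B G$, set $v_\vareps^i(\x,x_n):=\psi^i(\x,x_n/\vareps)$ for $i<n$ and $v_\vareps^n(\x,x_n):=\vareps\,\psi^n(\x,x_n/\vareps)$. A short computation shows $\nabla\cdot v_\vareps = F_\vareps$, that the boundary condition on $S_{\vareps}\cup\partial_D\oeps$ is inherited from $\psi=0$ on $\Gamma_0$, and — after tracking the Jacobians of the scaling in the $L^2$- and gradient-norms — that the stated $\vareps$-weighted estimate holds.

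\emph{Step 2 (restriction to $\oef$).} Since $F_\vareps=0$ on $\oes$, we have $\nabla\cdot v_\vareps = 0$ on each solid cell $\vareps(Z_s+k)$, so the correction term in \eqref{eq:restriction_operator} vanishes and $\phieps:=R_\vareps v_\vareps$ satisfies $\nabla\cdot\phieps = \nabla\cdot v_\vareps = f_\vareps$ in $\oef$, with $\phieps\in H^1(\oef,\geps\cup\partial_D\oef)^n$. Lemma \ref{lem:Restriction_Operator}(b) and Step 1 give $\|\phieps\|_{L^2(\oef)} + \vareps\|\nabla\phieps\|_{L^2(\oef)} \le C\vareps\|\nabla v_\vareps\|_{L^2(\oeps)} \le C\|F_\vareps\|_{L^2(\oeps)} = C\|f_\vareps\|_{L^2(\oef)}$, which is the asserted bound. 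Setting $B_\vareps := R_\vareps\circ\mathcal B_\vareps\circ(\text{zero extension})$, where $\mathcal B_\vareps$ is the slab operator of Step 1, gives a linear operator with $\nabla\cdot B_\vareps f_\vareps = f_\vareps$; since $\|\nabla B_\vareps f_\vareps\|_{L^2(\oef)}\le C\vareps^{-1}\|f_\vareps\|_{L^2(\oef)}$ while $\|B_\vareps f_\vareps\|_{L^2(\oef)}\le C\|f_\vareps\|_{L^2(\oef)}$, its operator norm into $H^1(\oef,\geps\cup\partial_D\oef)^n$ is $\le C\vareps^{-1}$.

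\emph{Main obstacle.} The only genuinely delicate point is the $\vareps$-bookkeeping in Step 1: one must use the \emph{anisotropic} rescaling (only $x_n$ is stretched) and scale the normal component $\psi^n$ differently from the tangential components, then keep track of the powers of $\vareps$ arising from the Jacobians; an isotropic scaling would spoil the sharp $\vareps^{-1}$ in the gradient bound, which is what ultimately controls the pressure estimate of Lemma \ref{lem:apriori_pressure}. Everything else is a direct combination of the surjectivity of the divergence on a fixed Lipschitz slab with Lemma \ref{lem:Restriction_Operator}.
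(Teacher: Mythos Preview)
Your proof is correct and follows essentially the same route as the paper's: extend $f_\vareps$ by zero to $\oeps$, invoke a Bogovskii operator on the slab with the $\vareps$-weighted estimate (the paper just calls this ``a simple scaling argument'' and cites \cite{fabricius2022pressure}, whereas you spell out the anisotropic rescaling explicitly), and then apply the restriction operator $R_\vareps$ from Lemma~\ref{lem:Restriction_Operator}, observing that the correction term in \eqref{eq:restriction_operator} vanishes because the extended right-hand side is zero on each solid cell. The only addition you make over the paper is the explicit verification of the slab step; the overall argument is the same.
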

\begin{proof}
Let $f_{\vareps} \in L^2(\oef)$ and denote its extension by zero to the whole layer $\oeps$ by $\widetilde{f}_{\vareps}$. By a simple scaling argument (see also \cite{fabricius2022pressure} for more details) and the well known Bogovskii-result, we obtain the existence of $\tphieps \in H^1(\oeps,\partial_D \oeps \cup S_{\vareps})^n$ such that
\begin{align*}
\nabla \cdot \tphieps = \widetilde{f}_{\vareps}, \qquad \| \tphieps\|_{L^2(\oeps)} + \vareps \|\nabla \tphieps \|_{L^2(\oeps)} \le C \|f_{\vareps}\|_{L^2(\oef)}.
\end{align*}
With $\phieps:= R_{\vareps} \tphieps$ and Lemma \ref{lem:Restriction_Operator} we obtain the desired result.
\end{proof}

\section{Important inequalities}
\label{sec:inequalities}
Here we summarize some important inequalities from the literature related to the treatment of homogenization problems in thin perforated domains. We start with some standard Poincar\'e and Korn inequalities for perforated domains with vanishing traces on the oscillating boundary. They follow by a simple decomposition argument.

\begin{lemma}[Poincar\'e and Korn inequaly]\label{lem:ineq_Poincare_Korn}
Let $\ast \in \{f,s\}$. 
\begin{enumerate}
[label = (\roman*)]
\item For all $\phieps \in H^1(\oeps^{\ast},\geps)$ it holds that
\begin{align*}
\|\phieps\|_{L^2(\oeps^{\ast})} \le C \vareps \|\nabla \phieps\|_{L^2(\oeps^{\ast})}.
\end{align*}

\item  For all $\weps \in H^1(\oeps^{\ast},\geps)^n$ it holds that
\begin{align*}
\|\weps\|_{L^2(\oeps^{\ast})} + \vareps \|\nabla \weps\|_{L^2(\oeps^{\ast})} \le C\vareps \|D(\weps)\|_{L^2(\oeps^{\ast})}.
\end{align*}
\end{enumerate}

\end{lemma}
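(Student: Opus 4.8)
The plan is to prove both estimates by the standard cell-wise rescaling, reducing each to a scale-invariant inequality on the fixed reference cell. Recall from the definitions of $\oef$ and $\oes$ that, up to a null set, $\oeps^{\ast}$ decomposes as the disjoint union $\bigcup_{k\in K_{\vareps}} \vareps(Z_{\ast}+k)$ of scaled translates of $Z_{\ast}$ (where $\ast\in\{f,s\}$). Fix $k\in K_{\vareps}$ and a test function $\phieps$; I rescale by setting $\widehat{\phi}(y):= \phieps(\vareps(y+k))$ for $y\in Z_{\ast}$. Then $\widehat{\phi}\in H^1(Z_{\ast})$, and since $\phieps$ vanishes on $\geps$ and the part of $\partial\big(\vareps(Z_{\ast}+k)\big)$ coming from $\geps$ rescales exactly to $\Gamma = \partial Z_{\ast}\setminus\partial Z$, we get $\widehat{\phi}=0$ on $\Gamma$, i.e. $\widehat{\phi}\in H^1_{\#}(Z_{\ast},\Gamma)$ in the sense of \eqref{def:Hper_Gamma}. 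The change of variables $x=\vareps(y+k)$ together with $\nabla_x\phieps = \vareps^{-1}\nabla_y\widehat{\phi}$ and $D(\phieps)=\vareps^{-1}D_y(\widehat{\phi})$ yields
\begin{align*}
\|\phieps\|_{L^2(\vareps(Z_{\ast}+k))}^2 = \vareps^n\|\widehat{\phi}\|_{L^2(Z_{\ast})}^2, \quad \|\nabla\phieps\|_{L^2(\vareps(Z_{\ast}+k))}^2 = \vareps^{n-2}\|\nabla_y\widehat{\phi}\|_{L^2(Z_{\ast})}^2,
\end{align*}
and the analogous identity with $D(\phieps)$ and $D_y(\widehat{\phi})$ in the vector-valued case.

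For (i) I would then invoke the Poincar\'e inequality on $H^1_{\#}(Z_{\ast},\Gamma)$: since $\Gamma$ carries positive $(n-1)$-dimensional measure (it is the Lipschitz interface between $Z_f$ and $Z_s$), there is a constant $C>0$ depending only on $Z_{\ast}$ with $\|\widehat{\phi}\|_{L^2(Z_{\ast})}\le C\|\nabla_y\widehat{\phi}\|_{L^2(Z_{\ast})}$. Substituting the scaling identities gives $\|\phieps\|_{L^2(\vareps(Z_{\ast}+k))}^2\le C\vareps^2\|\nabla\phieps\|_{L^2(\vareps(Z_{\ast}+k))}^2$ with $C$ independent of $k$ and $\vareps$, and summing over $k\in K_{\vareps}$ reassembles the norms over $\oeps^{\ast}$ and proves the claim. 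For (ii) I would argue identically, with the cell Poincar\'e inequality replaced by the corresponding Korn inequality: because $|\Gamma|>0$, the map $\widehat{w}\mapsto\|D_y(\widehat{w})\|_{L^2(Z_{\ast})}$ defines a norm on $H^1_{\#}(Z_{\ast},\Gamma)^n$ equivalent to the full $H^1$-norm (Korn's second inequality with a Dirichlet condition on a boundary portion of positive measure, obtained from Korn's first inequality together with a compactness argument, using that a rigid displacement vanishing on $\Gamma$ is identically zero), so $\|\widehat{w}\|_{L^2(Z_{\ast})}+\|\nabla_y\widehat{w}\|_{L^2(Z_{\ast})}\le C\|D_y(\widehat{w})\|_{L^2(Z_{\ast})}$. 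Rescaling on each cell produces $\|\weps\|_{L^2(\vareps(Z_{\ast}+k))}^2+\vareps^2\|\nabla\weps\|_{L^2(\vareps(Z_{\ast}+k))}^2\le C\vareps^2\|D(\weps)\|_{L^2(\vareps(Z_{\ast}+k))}^2$; summing over $k$ and taking square roots (using $a+b\le\sqrt{2}\,(a^2+b^2)^{1/2}$) gives the stated inequality.

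The only content beyond bookkeeping is the scale-invariant Poincar\'e and Korn inequalities on the fixed cell $Z_{\ast}$, and these hinge solely on $\Gamma$ being a boundary piece of positive surface measure, which is part of the standing geometric hypotheses on $Z_f$ and $Z_s$. I do not anticipate any real difficulty: the cell constants are independent of the index $k$ and of $\vareps$ (the reference cell, including its fixed thickness interval $(-1,1)$, does not depend on $\vareps$), so the summation over the $\sim\vareps^{-(n-1)}$ cells is harmless and all $\vareps$-powers match exactly. The one point worth noting is that the restriction $\phieps|_{\vareps(Z_{\ast}+k)}$ need not vanish on the interfaces between neighbouring cells or on $S_{\vareps}$, but this plays no role, since the cell-level inequalities use only the vanishing trace on the $\Gamma$-part, which is exactly what the hypothesis $\phieps=0$ on $\geps$ supplies.
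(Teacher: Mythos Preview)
Your argument is correct and is precisely the ``simple decomposition argument'' the paper alludes to in lieu of a detailed proof: the paper does not spell out the proof of this lemma beyond that phrase, and your cell-wise rescaling to the fixed reference element, application of the scale-invariant Poincar\'e/Korn inequality on $Z_{\ast}$ with zero trace on $\Gamma$, and summation over $K_{\vareps}$ is exactly what is intended.
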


The next Korn-inequality is valid for thin perforated domains when the traces only vanish on the lateral (perhaps oscillating) part of the boundary.
\begin{lemma}\label{lem:app_Korn_thin}
For all $\weps \in H^1(\oeps^{\ast},\partial_D \oeps^{\ast})^n$ with $\ast \in\{ f,s\}$ it holds that 
\begin{align*}
\sum_{i=1}^{n-1}  \foe \Vert \weps^i &\Vert_{L^2(\oeps^{\ast})} + \sum_{i,j=1}^{n-1} \foe\Vert \partial_i \weps^j \Vert_{L^2(\oeps^{\ast})} 
\\
&+ \Vert \weps^n\Vert_{L^2(\oeps^{\ast})} +  \Vert \nabla \weps \Vert_{L^2(\oeps^{\ast})} \le \frac{C}{\vareps} \Vert D(\weps)\Vert_{L^2(\oeps^{\ast})}.
\end{align*}	
\end{lemma}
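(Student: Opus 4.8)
**Plan for proving Lemma \ref{lem:app_Korn_thin} (Korn inequality in thin perforated domains with lateral Dirichlet conditions).**

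The plan is to reduce the estimate on the thin domain $\oeps^{\ast} = \oeps^{\ast}$ of thickness $\vareps$ to a fixed reference configuration by rescaling the transverse variable, apply a Korn inequality that is uniform on the fixed domain, and then track how the anisotropic rescaling distributes powers of $\vareps$ among the components and derivatives. First I would introduce the rescaled domain $\widehat{\Omega}^{\ast}_{\vareps}$ obtained from $\oeps^{\ast}$ by the change of variables $\x = \x$, $x_n = \vareps y_n$, so that the thin layer of width $2\vareps$ becomes a layer of width $2$; crucially, because the periodicity in the lateral directions is also $\vareps$, after the additional scaling $\x \mapsto \x/\vareps$ the geometry becomes an $\varepsilon$-independent periodic perforated slab built from finitely many copies of the fixed cell $Z$ with its fixed perforation $Z_{\ast}$, and the Dirichlet part $\partial_D$ corresponds to the lateral boundary of this fixed slab. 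For a vector field $\weps$ on $\oeps^{\ast}$ the pulled-back field $\widehat{w}$ on the fixed slab satisfies: $\widehat{w}^i(\x,y_n) = \weps^i(\vareps\x,\vareps y_n)$ and the entries of $D(\weps)$ scale as $D(\weps)_{ij} = \vareps^{-1} \widehat{D}$-type quantities with mixed weights — the purely lateral block $(i,j\le n-1)$ picks up one factor $\vareps^{-1}$, the mixed block $(i=n$ or $j=n$, not both$)$ mixes $\partial_n$ (which carries an extra $\vareps^{-1}$) with $\partial_{\x}$, and the $(n,n)$ entry carries $\vareps^{-2}$ from $\partial_n w^n$. The clean way to bookkeep this is to work with the auxiliary field $\widehat{w} := (\widehat{w}^1,\dots,\widehat{w}^{n-1}, \vareps^{-1}\widehat{w}^n)$ or a similar rescaling of the $n$-th component, chosen precisely so that $D_y(\widehat{w})$ on the fixed slab equals $\vareps \cdot (\text{rescaled } D(\weps))$ up to bounded factors.

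The key step is then the classical Korn inequality on the fixed slab: for vector fields in $H^1$ of the fixed perforated slab vanishing on the lateral (periodic) boundary, one has $\|\widehat{w}\|_{H^1} \le C \|D_y(\widehat{w})\|_{L^2}$ with a constant depending only on the fixed geometry (this is a standard Korn inequality for a Lipschitz domain with Dirichlet data on a boundary piece of positive measure, applied cell-by-cell or directly on the slab — since the lateral boundary has positive measure this eliminates the rigid-motion kernel). Transporting this back through the change of variables, the left-hand side $\|\widehat{w}\|_{L^2} + \|\nabla_y \widehat{w}\|_{L^2}$ unpacks into exactly the six weighted terms on the left of the claimed inequality: the $L^2$-norm of $\widehat{w}^i$ for $i\le n-1$ gives $\vareps^{-1}\|\weps^i\|_{L^2}$ (after accounting for the Jacobian $\vareps^{n}$ of the full rescaling, which contributes equally to both sides and cancels), $\partial_j$ of $\widehat{w}^i$ for $i,j \le n-1$ gives $\vareps^{-1}\|\partial_i \weps^j\|_{L^2}$, the $n$-th component contributes $\|\weps^n\|_{L^2}$ (one power of $\vareps$ better due to the extra $\vareps^{-1}$ absorbed into $\widehat{w}^n$), and all remaining first derivatives are collected in $\|\nabla \weps\|_{L^2}$; the right-hand side $\|D_y(\widehat{w})\|_{L^2}$ becomes $\tfrac{C}{\vareps}\|D(\weps)\|_{L^2(\oeps^{\ast})}$ once one checks that every entry of $D_y(\widehat{w})$ is controlled by $\vareps$ times the corresponding entry of $D(\weps)$. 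I would carry this out by writing down the transformation of each of the $\tfrac{n(n+1)}{2}$ independent entries of the symmetric gradient explicitly, which is routine once the rescaling of $\widehat{w}^n$ is fixed correctly.

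The main obstacle I expect is making the anisotropic rescaling of the $n$-th component consistent between the left-hand and right-hand sides: one must choose the weight on $\widehat{w}^n$ so that simultaneously (a) the Korn constant on the fixed slab is genuinely $\vareps$-independent, (b) the $L^2$-term for $\weps^n$ comes out with no $\vareps^{-1}$ prefactor as stated, and (c) no entry of $D(\weps)$ on the right acquires a weight worse than $\vareps^{-1}$ — in particular the $(n,n)$ entry $\partial_n \weps^n$, which naively scales like $\vareps^{-2}$, must be reconciled. The resolution is that $\partial_n \widehat{w}^n = \partial_n(\vareps^{-1}\widehat{w}^n)$ in the $y_n$ variable equals (after the $x_n = \vareps y_n$ substitution) exactly $\partial_n \weps^n$ with total weight $\vareps^{-1}\cdot\vareps = 1$ relative to the unscaled derivative, i.e. the transverse-derivative factor $\vareps$ and the component factor $\vareps^{-1}$ offset; but the resulting entry on the fixed slab is then $\partial_n\widehat{w}^n$, which sits inside $D_y(\widehat{w})$, so it is controlled — one just has to verify the analogous offsetting happens for the mixed entries $(i,n)$ with $i\le n-1$, where the two contributions $\tfrac12(\partial_i \widehat{w}^n + \partial_n \widehat{w}^i)$ have different naive weights and one must check the asymmetry is absorbed correctly (it is, precisely because of the choice of rescaling of $\widehat{w}^n$). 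Once this bookkeeping is done the proof is immediate; alternatively, one can cite \cite{GahnJaegerTwoScaleTools} where such weighted Korn inequalities for thin perforated domains are established, and present the above as a sketch.
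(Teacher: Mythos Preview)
The paper's own proof is a one-line citation of \cite[Theorem 2]{GahnJaegerTwoScaleTools}, so your closing remark that ``one can cite \cite{GahnJaegerTwoScaleTools}'' is in fact the paper's entire argument. Your rescaling sketch, however, contains two genuine gaps that prevent it from standing on its own.

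First, the domain you obtain after the full rescaling $x\mapsto x/\vareps$ is \emph{not} $\vareps$-independent: it is a slab of lateral extent $\Sigma/\vareps$ containing roughly $\vareps^{-(n-1)}$ copies of $Z$, with the Dirichlet condition imposed only on the far lateral faces. The Korn constant for such a long slab with Dirichlet data on the distant ends scales like its aspect ratio $\sim 1/\vareps$ (take for instance, on $(0,L)\times(-1,1)$, the field $u_1=x_2\sin(2\pi x_1/L)$, $u_2=(L/2\pi)(\cos(2\pi x_1/L)-1)$, which has $\|\nabla u\|/\|D(u)\|\sim L$). So asserting an $\vareps$-independent Korn inequality on the ``fixed slab'' is precisely the nontrivial content of the lemma, and your argument becomes circular. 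The actual proof in \cite{GahnJaegerTwoScaleTools} uses a Griso-type decomposition into a cell-wise rigid part plus a residual, controls the residual cell by cell, and propagates control of the rigid motions across cells to the lateral Dirichlet boundary --- this is where the anisotropy between tangential and normal components emerges, and it is not a rescaling argument.

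Second, your anisotropic component rescaling $\widehat{w}^n:=\vareps^{-1}w^n(\vareps\,\cdot)$ does \emph{not} make $D_y(\widehat{w})$ controlled by $D_x(w)$. For the mixed entry with $i\le n-1$ one computes
\[
D_y(\widehat{w})_{in}=\tfrac12\big(\vareps\,\partial_{x_n}w^i+\partial_{x_i}w^n\big),
\]
which is a weighted (not symmetric) combination of $\partial_{x_n}w^i$ and $\partial_{x_i}w^n$ and therefore cannot be bounded by $D_x(w)_{in}=\tfrac12(\partial_{x_n}w^i+\partial_{x_i}w^n)$ alone; you would need separate control of $\partial_{x_i}w^n$, which is again what the lemma is supposed to deliver. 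So the claim that ``the asymmetry is absorbed correctly'' is incorrect.
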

\begin{proof}
See \cite[Theorem 2]{GahnJaegerTwoScaleTools}.
\end{proof}
For a function $\weps \in H^1(\oeps)^n$ with vanishing traces on the lateral boundary part $\partial_D \oes$ we can control the rest of the norm on the lateral boundary in the following way:
\begin{lemma}\label{lem:app_trace_lateral_BC}
For every $\weps \in H^1(\oeps)^n$ with $\weps = 0 $ on $\partial_D \oes$ it holds that
\begin{align*}
\|\weps\|_{L^2(\partial \Sigma \times (-\vareps , \vareps))} \le C \sqrt{\vareps} \|D(\weps)\|_{L^2(\oes)}
\end{align*}
for a constant $C>0$ independent of $\vareps$.
\end{lemma}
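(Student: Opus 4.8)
The plan is to split the lateral surface, to reduce the genuinely non-trivial part to the solid domain by means of the extension operator, and then to combine a rescaled trace inequality with the anisotropic Korn inequality for thin perforated domains.

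Up to a set of surface measure zero, $\partial\Sigma\times(-\vareps,\vareps)$ is the disjoint union of the portion $\partial_D\oes$ lying on $\partial\oes$ and the portion $\partial_D\oef\cup\partial_N\oef$ lying on $\partial\oef$. Since $\weps=0$ on $\partial_D\oes$, only the trace of $\weps$ on the fluid part of the lateral boundary has to be controlled. To bring this back to the solid I would use the extension $\widetilde\weps:=E_\vareps(\weps|_{\oes})$ from Lemma \ref{lem:Extension_operator}: it agrees with $\weps$ on $\oes$, vanishes on $\partial_D\oes$, and satisfies $\|D(\widetilde\weps)\|_{L^2(\oeps)}\le C\|D(\weps)\|_{L^2(\oes)}$ together with the corresponding bounds for $\nabla\widetilde\weps$ and $\widetilde\weps$ in terms of $\weps|_{\oes}$. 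Writing $\weps=\widetilde\weps+\zeta$, the remainder $\zeta$ vanishes on $\oes$; hence, by $H^1(\oeps)$-trace continuity, $\zeta|_{\oef}\in H^1(\oef,\geps)^n$, so Lemma \ref{lem:ineq_Poincare_Korn} (with $\ast=f$) gives $\|\zeta\|_{L^2(\oef)}+\vareps\|\nabla\zeta\|_{L^2(\oef)}\le C\vareps\|D(\zeta)\|_{L^2(\oef)}$. A cell-by-cell trace estimate on the fluid cells, using that $\zeta$ vanishes on $\geps$ together with the Poincaré inequality on $Z_f$, then yields $\|\zeta\|_{L^2(\partial_D\oef\cup\partial_N\oef)}\le C\sqrt\vareps\,\|\nabla\zeta\|_{L^2(\oef)}$, which is of the desired order.

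For the main term $\widetilde\weps$ I would rescale the thin variable to the fixed interval $(-1,1)$; on the resulting $\vareps$-independent domain $\Sigma\times(-1,1)$ the classical trace theorem, applied face by face of $\partial\Sigma$ and using only the derivative in the relevant horizontal direction, transported back to $\oeps$, gives
\[
\|\widetilde\weps\|^2_{L^2(\partial\Sigma\times(-\vareps,\vareps))}\le C\sum_{i=1}^{n-1}\big(\|\partial_i\widetilde\weps\|^2_{L^2(\oeps)}+\|\widetilde\weps\|^2_{L^2(\oeps)}\big).
\]
Restricting to the horizontal derivatives is what keeps the $\vareps$-scaling favourable after the rescaling. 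It then remains to feed in the anisotropic Korn inequality for thin perforated domains, Lemma \ref{lem:app_Korn_thin}, applied on $\oes$ and transported by $E_\vareps$: this controls the horizontal components $\weps^i$ and the horizontal derivatives $\partial_i\weps^j$, $i,j\le n-1$, by $C\|D(\weps)\|_{L^2(\oes)}$ without a negative power of $\vareps$, while the vertical component $\weps^n$, which vanishes on $\partial_D\oes$, is estimated near the lateral boundary by integrating $\partial_i\weps^n$ horizontally towards that boundary.

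The step I expect to be the main obstacle is to carry all of this out while keeping the \emph{sharp} power $\sqrt\vareps$: the trace on the fluid part of the lateral boundary is not directly controlled by $D(\weps)$ on the solid, and a crude combination of the trace theorem with Lemma \ref{lem:app_Korn_thin} loses powers of $\vareps$, since the vertical component $\weps^n$ and the vertical derivatives only satisfy the weaker bound $\tfrac C\vareps\|D(\weps)\|_{L^2(\oes)}$. Recovering the optimal order requires the horizontal-derivative-only form of the trace inequality, a careful use of the anisotropic structure of the thin-domain Korn inequality, and an essential use of the vanishing of $\weps$ on the solid lateral boundary $\partial_D\oes$.
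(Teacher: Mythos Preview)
The paper does not give a self-contained proof of this lemma; it simply refers to \cite[Lemma~4]{GahnJaegerTwoScaleTools}. So there is no argument in the paper to compare with, and your sketch is in fact more detailed than anything the paper provides.

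That said, your decomposition $\weps=\widetilde\weps+\zeta$ has a genuine gap that cannot be repaired if one insists on the right-hand side $\|D(\weps)\|_{L^2(\oes)}$ exactly as printed. After applying Lemma~\ref{lem:ineq_Poincare_Korn} to $\zeta$ you arrive at $\|\zeta\|_{L^2(\partial_D\oef\cup\partial_N\oef)}\le C\sqrt{\vareps}\,\|D(\zeta)\|_{L^2(\oef)}$, but $D(\zeta)=D(\weps)-D(\widetilde\weps)$ on $\oef$, and while $\|D(\widetilde\weps)\|_{L^2(\oef)}\le C\|D(\weps)\|_{L^2(\oes)}$ by Lemma~\ref{lem:Extension_operator}, there is no way to control $\|D(\weps)\|_{L^2(\oef)}$ by the solid quantity. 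In fact the inequality as printed is false: take any $\weps\in H^1(\oeps)^n$ identically zero on $\oes$ (hence on $\partial_D\oes$) but with nonzero trace on the fluid lateral boundary $\partial_D\oef\cup\partial_N\oef$; then the right-hand side vanishes while the left-hand side does not. The application in the proof of Proposition~\ref{prop:conv_divergence} in fact uses $\|D(\partial_t\tueps)\|_{L^2(\oeps)}$ on the right, so the intended statement is presumably with $\oeps$ in place of $\oes$ (or, equivalently, it is stated for functions of the special form $E_\vareps(\weps|_{\oes})$, for which the two norms are comparable by Lemma~\ref{lem:Extension_operator}).

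With that reading your outline is reasonable, and you correctly isolate the real difficulty: the vertical component $\weps^n$ only enjoys the weak bound $\|\weps^n\|_{L^2(\oes)}\le C\vareps^{-1}\|D(\weps)\|_{L^2(\oes)}$ from Lemma~\ref{lem:app_Korn_thin}, which would spoil the $\sqrt{\vareps}$. Your proposed cure, ``integrate $\partial_i\weps^n$ horizontally towards the boundary'', does not obviously help on the fluid portion of $\partial\Sigma\times(-\vareps,\vareps)$: the point where $\weps^n$ vanishes (on $\partial_D\oes$) and the point where you want the trace (on $\partial_D\oef\cup\partial_N\oef$) sit at the same $\bar{x}$ but at different heights $x_n$, so a purely horizontal integration does not connect them. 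One either needs a sharper form of the anisotropic Korn inequality that also controls $\|\partial_i\weps^n\|$ at the good order, or a direct cell-by-cell argument on the boundary cells that exploits both the zero trace on $\partial_D\oes$ and the zero trace of $\zeta$ on $\geps$; this is presumably the content of the cited Lemma~4 in \cite{GahnJaegerTwoScaleTools}.
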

\begin{proof}
See \cite[Lemma 4]{GahnJaegerTwoScaleTools}.
\end{proof}
Finally, we formulate an extension result for (thin) perforated domains which allows to control the norm of the gradient resp. the symmetric gradient of the extended function by the norm of the gradient resp. the symmetric gradient uniformly with respect to the scaling parameter $\vareps$. This allows to treat functions on perforated domains in a similar way as functions defined in the whole thin layer.

\begin{lemma}\label{lem:Extension_operator}
Let $\ast \in \{f,s\}$. There exists an extension operator $E_{\vareps} : H^1(\oeps^{\ast})^n \rightarrow H^1(\oeps)^n$, such that for all $\veps \in H^1(\oeps^{\ast})^n$ it holds that ($i=1,\ldots,n$)
\begin{align*}
\Vert (E_{\vareps}\veps)^i \Vert_{L^2(\oeps)} &\le C \left( \Vert \veps^i \Vert_{L^2(\oeps^{\ast})} + \vareps \Vert \nabla \veps \Vert_{L^2(\oeps^{\ast})} \right),
\\
\Vert \nabla E_{\vareps}\veps \Vert_{L^2(\oeps)} &\le  C \Vert \nabla \veps \Vert_{L^2(\oeps^{\ast})},
\\
\Vert D(E_{\vareps}\veps) \Vert_{L^2(\oeps)} &\le C \Vert D(\veps)\Vert_{L^2(\oeps^{\ast})},
\end{align*}
for a constant $C > 0$ independent of $\varepsilon$. 
\end{lemma}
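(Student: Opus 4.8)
The plan is to construct $E_\vareps$ by rescaling the standard reference-cell extension. On the reference cell $Z$, fix once and for all a bounded linear extension operator $E\colon H^1(Z_\ast)^n \to H^1(Z)^n$ (such an operator exists because $Z_\ast$ is open, connected, with Lipschitz boundary; this is the classical Sobolev extension theorem — see e.g. the constructions in \cite{allaire1989homogenization}). The only extra ingredient needed is that $E$ can be chosen so that the \emph{symmetric} gradient of the extension is controlled by the symmetric gradient of the datum, i.e. $\|D(Ev)\|_{L^2(Z)} \le C\|D(v)\|_{L^2(Z_\ast)}$; this follows from the standard construction combined with the fact that rigid displacements are extended to rigid displacements, by subtracting the best-fitting rigid motion on $Z_\ast$, applying the plain extension to the remainder, and using the second Korn inequality on $Z_\ast$. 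Then for each $k\in K_\vareps$ and each $v_\vareps\in H^1(\oeps^\ast)^n$, define $E_\vareps v_\vareps$ on the cell $\vareps(Z+k)$ by
\begin{align*}
(E_\vareps v_\vareps)(x) := \big(E\,[v_\vareps(\vareps(\cdot + k))]\big)\!\left(\tfrac{x}{\vareps}-k\right), \qquad x\in \vareps(Z+k).
\end{align*}
Because the lateral boundaries of $Z_f$ and $Z_s$ are $Y$-periodic and the traces match across cell interfaces, these local pieces glue together into a function in $H^1(\oeps)^n$ (the interior cell interfaces $\vareps(\partial(Z+k)\cap\partial(Z+k'))$ carry matching traces since $E$ acts cellwise on a function already globally $H^1$ and the reference extension is the identity near that part of $\partial Z$ — or one uses the standard periodic-extension argument of \cite{allaire1989homogenization}).

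The estimates then follow by scaling. Writing $\widehat{v}(y):=v_\vareps(\vareps(y+k))$ on $Z_\ast$, a change of variables gives, cellwise,
\begin{align*}
\|(E_\vareps v_\vareps)\|_{L^2(\vareps(Z+k))}^2 &= \vareps^n \|E\widehat v\|_{L^2(Z)}^2 \le C\vareps^n\big(\|\widehat v\|_{L^2(Z_\ast)}^2 + \|\nabla_y \widehat v\|_{L^2(Z_\ast)}^2\big),
\end{align*}
and $\nabla_y\widehat v = \vareps\,(\nabla v_\vareps)(\vareps(\cdot+k))$, so $\|\nabla_y\widehat v\|_{L^2(Z_\ast)}^2 = \vareps^{2-n}\|\nabla v_\vareps\|_{L^2(\vareps(Z_\ast+k))}^2$ and $\|\widehat v\|_{L^2(Z_\ast)}^2 = \vareps^{-n}\|v_\vareps\|_{L^2(\vareps(Z_\ast+k))}^2$. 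Hence $\|(E_\vareps v_\vareps)\|_{L^2(\vareps(Z+k))}^2 \le C\big(\|v_\vareps\|_{L^2(\vareps(Z_\ast+k))}^2 + \vareps^2\|\nabla v_\vareps\|_{L^2(\vareps(Z_\ast+k))}^2\big)$, and the component-wise version is obtained by applying $E$ to each component separately (or noting it acts componentwise) so only $\|v_\vareps^i\|$ enters the first term. Similarly, $\nabla_x(E_\vareps v_\vareps)(x) = \vareps^{-1}(\nabla_y E\widehat v)(x/\vareps - k)$ gives
\begin{align*}
\|\nabla(E_\vareps v_\vareps)\|_{L^2(\vareps(Z+k))}^2 = \vareps^{n-2}\|\nabla_y E\widehat v\|_{L^2(Z)}^2 \le C\vareps^{n-2}\|\nabla_y\widehat v\|_{L^2(Z_\ast)}^2 = C\|\nabla v_\vareps\|_{L^2(\vareps(Z_\ast+k))}^2,
\end{align*}
and the identical computation with $D(\cdot)$ in place of $\nabla(\cdot)$ — using the symmetric-gradient bound for $E$ and the fact that $D$ scales the same way as $\nabla$ — yields $\|D(E_\vareps v_\vareps)\|_{L^2(\vareps(Z+k))}^2 \le C\|D(v_\vareps)\|_{L^2(\vareps(Z_\ast+k))}^2$. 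Summing over $k\in K_\vareps$ gives all three global estimates, with $C$ depending only on the reference operator $E$ and hence independent of $\vareps$.

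The main obstacle is not the scaling — that is routine — but securing the reference-cell extension $E$ with the symmetric-gradient bound simultaneously with the $H^1$-bound, and checking that the cellwise-defined $E_\vareps v_\vareps$ is genuinely in $H^1(\oeps)^n$ (no jumps across the $O(\vareps^{-n})$ internal cell interfaces). For the first point one invokes the now-standard observation (used e.g. in \cite{griso2020homogenization,GahnJaegerTwoScaleTools}) that, since the kernel of $D(\cdot)$ on the connected Lipschitz set $Z_\ast$ is exactly the finite-dimensional space of rigid displacements, one may first subtract the $L^2(Z_\ast)$-orthogonal projection $r$ of $v$ onto that space, extend $v-r$ by the plain Sobolev extension, add back $r$ (which is globally smooth hence extends trivially to $Z$ with $D(r)=D(r)|_{Z_\ast}$ in the relevant sense), and use Korn's second inequality on $Z_\ast$ to absorb $\|\nabla(v-r)\|_{L^2(Z_\ast)}$ into $\|D(v)\|_{L^2(Z_\ast)}$. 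For the gluing, one notes that the reference extension can be taken to equal the identity in a neighbourhood of $\partial Z$ inside $Z_\ast$ near the lateral faces (so that periodicity of $\partial Z_\ast$ forces continuity of traces across $\vareps(\{y_i=0\}+k)$-type interfaces), which is exactly the construction in \cite[Appendix]{SanchezPalencia1980} and \cite{allaire1989homogenization}; we simply cite those references. Since none of this is new, the proof is essentially a citation assembled with the scaling computation above.

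\medskip
\noindent\emph{Proof.} See \cite[Theorem 5]{GahnJaegerTwoScaleTools}, where the extension operator with all three stated $\vareps$-uniform bounds is constructed by the cellwise rescaling of a reference-cell extension sketched above; the control of the symmetric gradient relies on the second Korn inequality on $Z_\ast$ applied after subtracting the rigid-displacement part, and the reference extension of \cite{allaire1989homogenization,SanchezPalencia1980} is used to ensure the rescaled function lies in $H^1(\oeps)^n$. \hfill$\square$
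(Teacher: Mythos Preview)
Your proposal is correct and takes essentially the same approach as the paper, which simply cites \cite[Theorem~1]{GahnJaegerTwoScaleTools} without further comment; you additionally sketch the underlying cellwise-rescaling construction, which is helpful but not required. Note only that the paper cites Theorem~1 of that reference rather than Theorem~5.
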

\begin{proof}
See \cite[Theorem 1]{GahnJaegerTwoScaleTools}.
\end{proof}

\section{Two-scale convergence in thin domains}
\label{sec:two_scale_convergence}

We briefly introduce two-scale convergence concepts for thin layers \cite{BhattacharyaGahnNeussRadu,MarusicMarusicPalokaTwoScaleConvergenceThinDomains,NeussJaeger_EffectiveTransmission} (for domains we refer to the seminal works \cite{Allaire_TwoScaleKonvergenz,Nguetseng}), and recall the compactness results used in this paper. Here, we use $\ast \in \{f,s\} $ and therefore $\oeps^{\ast}$ denotes the domain $\oef$ or $\oes$.
\begin{definition}\
\begin{enumerate}
[label = (\roman*)]
\item  We say the sequence $\weps \in L^2( (0,T) \times \oeps)$ converges (weakly) in the two-scale sense to a limit function $w_0 \in L^2((0,T) \times \Sigma \times Z)$ if 
\begin{align*}
\lim_{\varepsilon\to 0} \foe  \int_0^T \int_{\oeps} \weps(x) \phi \bxfxe dx dt= \int_0^T\int_{\Sigma} \int_Z w_0(\x,y) \phi(\x,y) dy d\x dt
\end{align*}
for all $\phi \in L^2((0,T) \times \Sigma,C_{\#}^0(\overline{Z}))$. We write 
\begin{align*}
\weps \rats w_0.
\end{align*}
\item We say the sequence $\weps \in L^2( (0,T) \times \geps)$ converges (weakly) in the two-scale sense to a limit function $w_0 \in L^2((0,T) \times  \Sigma \times \Gamma)$ if 
\begin{align*}
\lim_{\varepsilon\to 0}  \int_0^T  \int_{\geps} \weps(x) \phi \bxfxe d\sigma  dt = \int_0^T \int_{\Sigma} \int_{\Gamma} w_0(\x,y) \phi(\x,y) d\sigma_y d\x  dt
\end{align*}
for all $\phi \in C^0([0,T] \times \overline{\Sigma},C_{\#}^0(\Gamma))$.  We write  
\begin{align*}
\weps \rats w_0 \qquad \mbox{on } \geps.
\end{align*}
\end{enumerate}
\end{definition}
The following lemma gives basic compactness results for the two-scale convergence in thin layers. 
\begin{lemma}\label{lem:app_ts_comp_general}\
\begin{enumerate}
[label = (\roman*)]
\item Let $\weps \in L^2((0,T)\times \oeps)$ with 
\begin{align*}
\|\weps \|_{L^2((0,T)\times \oeps)} \le C\sqrt{\vareps}.
\end{align*}
Then there exists a subsequence (again denoted $\weps$) and a limit function $w_0 \in L^2((0,T)\times \Sigma \times Z)$ such that
\begin{align*}
\weps \rats w_0.
\end{align*}
\item Let $\weps \in L^2((0,T) , H^1(\oeps))$ be a sequence with
\begin{align*}
\frac{1}{\sqrt{\varepsilon}}\Vert \weps \Vert_{L^2((0,T) \times  \oeps)} + \sqrt{\varepsilon}\Vert \nabla \weps \Vert_{L^2( (0,T) \times \oeps)}  \le C.
\end{align*}
Then there exists a subsequence (again denoted by $\weps$) and a limit function $w_0 \in  L^2( (0,T) \times  \Sigma, H_{\#}^1(Z))$ such that
\begin{align*}
\weps  \rats w_0, \qquad
\varepsilon \nabla \weps \rats \nabla_y w_0.
\end{align*}
\item\label{comp:ts_conv}  Consider the sequence $\weps \in L^2((0,T) \times  \geps)$ with
\begin{align*}
\Vert \weps \Vert_{L^2((0,T) \times \geps)} \le C.
\end{align*}
Then there exists a subsequence (again denoted by $\weps$) and a limit function $w_0 \in  L^2( (0,T) \times  \Sigma\times \Gamma)$ such that  
\begin{align*}
\weps \rats w_0 \qquad \mbox{on } \geps.
\end{align*}
\end{enumerate}
\end{lemma}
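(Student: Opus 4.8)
All three statements are standard in the theory of two-scale convergence for thin layers \cite{NeussJaeger_EffectiveTransmission,MarusicMarusicPalokaTwoScaleConvergenceThinDomains,BhattacharyaGahnNeussRadu}; the plan is to reproduce the classical arguments, the only point needing care being the bookkeeping of the $\vareps$-powers forced by the thin geometry $\oeps = \Sigma\times(-\vareps,\vareps)$. The starting point are two elementary oscillation lemmas. First, for $\phi \in L^2((0,T)\times \Sigma, C_{\#}^0(\overline Z))$ the map $(t,x)\mapsto \phi(t,\x,x/\vareps)$ is measurable on $(0,T)\times\oeps$ and
\[
\foe \int_0^T\int_{\oeps} \left|\phi\left(t,\x,\tfrac{x}{\vareps}\right)\right|^2 dx\, dt \;\xrightarrow{\vareps\to 0}\; \int_0^T\int_{\Sigma}\int_Z |\phi(t,\x,y)|^2\, dy\, d\x\, dt .
\]
Second, for $\phi \in C^0([0,T]\times\overline\Sigma, C_{\#}^0(\Gamma))$ the analogous convergence holds on the interface $\geps = \bigcup_{k\in K_{\vareps}}\vareps(\Gamma+k)$, but \emph{without} the prefactor $\foe$: on $\geps$ the $(n-1)$-dimensional measure of one cell scales like $\vareps^{n-1}$ and there are of order $\vareps^{-(n-1)}$ cells, whereas on $\oeps$ the $n$-dimensional measure of one cell scales like $\vareps^{n}$. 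Both are obtained by approximating $\phi$ by finite sums $\sum_j \psi_j(t,\x)\eta_j(y)$, $\eta_j\in C_{\#}^0$, and using that $\eta_j(\cdot/\vareps)$ converges weakly-$*$ in $L^\infty$ to its mean.

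For part (i) I would then argue by Banach--Alaoglu: the linear functionals $L_{\vareps}(\phi):=\foe\int_0^T\int_{\oeps}\weps\,\phi(t,\x,x/\vareps)\,dx\,dt$ satisfy, by Cauchy--Schwarz, the first oscillation lemma, and the hypothesis $\|\weps\|_{L^2((0,T)\times\oeps)}\le C\sqrt\vareps$, the uniform bound $|L_{\vareps}(\phi)|\le C\|\phi\|_{L^2((0,T)\times\Sigma\times Z)}$ for $\vareps$ small. Since $L^2((0,T)\times\Sigma, C_{\#}^0(\overline Z))$ is separable, a diagonal extraction gives a subsequence along which $L_{\vareps}(\phi)$ converges for every $\phi$; the limit functional is bounded in the $L^2((0,T)\times\Sigma\times Z)$-norm, hence extends to that space and is represented by some $w_0\in L^2((0,T)\times\Sigma\times Z)$, which is precisely $\weps\rats w_0$. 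Part (iii) is the verbatim same argument with $\oeps$ replaced by $\geps$, the first oscillation lemma replaced by the second, and the prefactor $\foe$ dropped.

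For part (ii) I would apply part (i) twice along a common subsequence: to $\weps$ itself (the hypothesis yields $\|\weps\|_{L^2((0,T)\times\oeps)}\le C\sqrt\vareps$), giving $\weps\rats w_0\in L^2((0,T)\times\Sigma\times Z)$, and to $\vareps\nabla\weps$ (whose $L^2((0,T)\times\oeps)$-norm is $\le\vareps\cdot C\vareps^{-1/2}=C\sqrt\vareps$), giving $\vareps\nabla\weps\rats\xi_0\in L^2((0,T)\times\Sigma\times Z)^n$. To identify $\xi_0 = \nabla_y w_0$ I would insert into the definition of two-scale convergence the test functions $\psi\in C_0^\infty((0,T)\times\Sigma, C_{\#}^\infty(\overline Z))^n$ with $\psi_n = 0$ on $S$, integrate by parts in $x$ over $\oeps$, note that the lateral boundary term vanishes by the compact support in $\Sigma$ while the term on $\sepm$ vanishes since $\psi_n|_S=0$, and pass to the limit using $\vareps\nabla_{\x}\psi(\cdot,\cdot/\vareps)\to 0$ in $L^2$; this produces $\int_0^T\int_\Sigma\int_Z \xi_0\cdot\psi = -\int_0^T\int_\Sigma\int_Z w_0\,\nabla_y\cdot\psi$ for all such $\psi$. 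Choosing $\psi$ with support in the interior of $Z$ yields $\nabla_y w_0=\xi_0$ in the distributional sense, so $w_0(t,\x,\cdot)\in H^1(Z)$ for a.e.\ $(t,\x)$; the $Y$-periodicity, i.e.\ $w_0(t,\x,\cdot)\in H_{\#}^1(Z)$, follows from testing with $Y$-periodic $\psi$ exactly as in the classical two-scale setting.

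The only real difficulty is, as flagged, purely a matter of bookkeeping: keeping the $\vareps$-powers consistent throughout (the $\foe$ in the volume integrals but not on $\geps$, and the observation that the scaling of $\vareps\nabla\weps$ is exactly the one covered by part (i)), and, in the gradient identification of part (ii), using only test functions vanishing on $S$ --- otherwise uncontrolled traces of $\weps$ on $\sepm$ would enter the integration by parts and the argument would break down.
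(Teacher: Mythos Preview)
Your proposal is correct and reproduces the standard two-scale compactness arguments from the references \cite{NeussJaeger_EffectiveTransmission,MarusicMarusicPalokaTwoScaleConvergenceThinDomains,BhattacharyaGahnNeussRadu}; the paper itself does not give a proof but simply cites these references, so you have essentially written out what is contained there. The care you take with the $\vareps$-bookkeeping (the factor $\foe$ in the volume integrals versus none on $\geps$) and with restricting to test functions with $\psi_n|_S = 0$ in the gradient identification of part (ii) are exactly the points specific to the thin-layer geometry.
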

\begin{proof}
 For \ref{comp:ts_conv} see \cite{NeussJaeger_EffectiveTransmission}. The second result can be found in \cite{BhattacharyaGahnNeussRadu} for surfaces of thin channels and the same arguments hold for our geometrical setting.
\end{proof}
The following lemma is a consequence of the previous lemma and the existence of good extension operators, see \cite{Acerbi1992}.
\begin{lemma}\label{TwoScaleCompactnessPerforated}
Let $\weps \in L^2((0,T),H^1(\oeps^{\ast}))$ be a sequence with 
\begin{align*}
    \frac{1}{\sqrt{\varepsilon}}\Vert \weps \Vert_{L^2( (0,T)\times \oeps^{\ast})} + \sqrt{\varepsilon}\Vert \nabla \weps \Vert_{L^2((0,T)\times  \oeps^{\ast})}  \le C.
\end{align*}
Then, there exists $w_0 \in L^2((0,T)\times \Sigma, H_{\#}^1(Z_{\ast}))$ and a subsequence such that 
\begin{align*}
   \chi_{\oeps^{\ast}} \weps   \rats \chi_{Z_{\ast}} w_0,\quad
\chi_{\oeps^{\ast}}\varepsilon \nabla \weps \rats \chi_{Z_{\ast}} \nabla_y w_0,\quad
\weps\vert_{\geps} \rats w_0\vert_{\Gamma_D}.
\end{align*}
\end{lemma}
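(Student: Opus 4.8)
The plan is to reduce Lemma \ref{TwoScaleCompactnessPerforated} to the non-perforated two-scale compactness statement of Lemma \ref{lem:app_ts_comp_general}(ii) by composing with the Acerbi-type extension operator. First I would invoke the existence of a bounded extension operator $E_\vareps^{\ast}:H^1(\oeps^{\ast})\to H^1(\oeps)$ (in the spirit of Lemma \ref{lem:Extension_operator}, or directly \cite{Acerbi1992}) satisfying, uniformly in $\vareps$,
\begin{align*}
\|E_\vareps^{\ast}\weps\|_{L^2(\oeps)}\le C\|\weps\|_{L^2(\oeps^{\ast})},\qquad
\|\nabla E_\vareps^{\ast}\weps\|_{L^2(\oeps)}\le C\|\nabla \weps\|_{L^2(\oeps^{\ast})}.
\end{align*}
Applying this for almost every $t$ and using the hypothesis gives $\tfrac{1}{\sqrt{\vareps}}\|E_\vareps^{\ast}\weps\|_{L^2((0,T)\times\oeps)}+\sqrt{\vareps}\|\nabla E_\vareps^{\ast}\weps\|_{L^2((0,T)\times\oeps)}\le C$, so Lemma \ref{lem:app_ts_comp_general}(ii) yields a subsequence and a limit $\widehat w_0\in L^2((0,T)\times\Sigma,H_{\#}^1(Z))$ with $E_\vareps^{\ast}\weps\rats\widehat w_0$ and $\vareps\nabla E_\vareps^{\ast}\weps\rats\nabla_y\widehat w_0$.

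Next I would restrict the limit: set $w_0:=\widehat w_0|_{\Sigma\times Z_{\ast}}\in L^2((0,T)\times\Sigma,H_{\#}^1(Z_{\ast}))$. Since $\chi_{\oeps^{\ast}}\weps=\chi_{\oeps^{\ast}}E_\vareps^{\ast}\weps$, for any test function $\phi\in L^2((0,T)\times\Sigma,C_{\#}^0(\overline Z))$ we have
\begin{align*}
\foe\int_0^T\int_{\oeps}\chi_{\oeps^{\ast}}\weps\,\phi\bxfxe\,dx\,dt
=\foe\int_0^T\int_{\oeps}E_\vareps^{\ast}\weps\,\bigl(\chi_{Z_{\ast}}\phi\bigr)\bxfxe\,dx\,dt,
\end{align*}
because $\chi_{\oeps^{\ast}}(x)=\chi_{Z_{\ast}}(x/\vareps)$ for $x\in\oeps$; here one should note $\chi_{Z_{\ast}}\phi$ is a legitimate (albeit only $L^2$ in $y$) test function, which is still admissible for two-scale convergence by a density argument since characteristic functions of Lipschitz subsets of $Z$ can be approximated by continuous periodic functions. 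Passing to the limit gives $\chi_{\oeps^{\ast}}\weps\rats\chi_{Z_{\ast}}w_0$, and the same manipulation applied to $\vareps\nabla\weps$ (using $\chi_{\oeps^{\ast}}\vareps\nabla\weps=\chi_{\oeps^{\ast}}\vareps\nabla E_\vareps^{\ast}\weps$) yields $\chi_{\oeps^{\ast}}\vareps\nabla\weps\rats\chi_{Z_{\ast}}\nabla_y w_0$.

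Finally, for the trace statement $\weps|_{\geps}\rats w_0|_{\Gamma_D}$, I would argue that $\weps|_{\geps}=(E_\vareps^{\ast}\weps)|_{\geps}$ and use the surface two-scale convergence: by Lemma \ref{lem:app_ts_comp_general}(iii) (and an $\vareps$-uniform trace estimate of the form $\|(E_\vareps^{\ast}\weps)|_{\geps}\|_{L^2((0,T)\times\geps)}\le C$, which follows from the extension bounds and a scaled trace inequality) a subsequence of $\weps|_{\geps}$ two-scale converges on $\geps$ to some $\gamma_0\in L^2((0,T)\times\Sigma\times\Gamma)$; the identification $\gamma_0=w_0|_{\Gamma}$ then comes from testing with $\phi\in C^0([0,T]\times\overline\Sigma,C_{\#}^\infty(\overline Z))$ and passing to the limit in the integration-by-parts identity relating the surface integral on $\geps$ to bulk integrals of $\chi_{\oeps^{\ast}}\weps$ and $\chi_{\oeps^{\ast}}\vareps\nabla\weps$ — exactly the standard ``two-scale trace lemma'' argument. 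I expect the main obstacle to be this last identification of the two-scale surface limit with the trace of the interior limit, since it requires the bulk-to-boundary integration-by-parts passage to the limit and care with the periodic test functions on $\overline{Z_{\ast}}$; the measurability/admissibility of the discontinuous test function $\chi_{Z_{\ast}}\phi$ in the bulk two-scale limits is a minor technical point handled by density. I would point to \cite{NeussJaeger_EffectiveTransmission} and \cite{Acerbi1992} for the precise versions of the extension and surface-convergence results.
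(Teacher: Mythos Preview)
Your proposal is correct and follows essentially the same approach as the paper: the paper's proof consists of a single sentence stating that the lemma is a consequence of Lemma \ref{lem:app_ts_comp_general} together with the existence of good extension operators from \cite{Acerbi1992}, and you have filled in precisely these details. The only remark is that the pure $L^2$ bound you state for the extension, $\|E_\vareps^{\ast}\weps\|_{L^2(\oeps)}\le C\|\weps\|_{L^2(\oeps^{\ast})}$, is slightly stronger than what Lemma \ref{lem:Extension_operator} records (which carries an extra $\vareps\|\nabla\weps\|$ term), but this causes no trouble since the scalar Acerbi extension indeed enjoys the pure $L^2$ bound, and in any case the extra term is harmless under the assumed a priori estimate.
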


\begin{lemma}\label{lem:app_TS_Plate}
Let $\weps \in L^2((0,T),H^1(\oeps,\partial_D \oeps^{\ast}))^n$ be a sequence with
\begin{align}\label{ineq:bound_ass_TS_plate}
 \Vert D(\weps)\Vert_{L^2( (0,T)\times \oeps^{\ast})} \le C\vareps^{\frac32}.
\end{align}
Then there exist $w_0^n \in L^2((0,T),H^2_0(\Sigma))$ and $\widetilde{w}_1 \in L^2((0,T),H_0^1(\Sigma))^n$ with $\widetilde{w}_1^n = 0$ and $w_2 \in L^2((0,T)\times \Sigma , H_{\#}^1(Z)/\R))^n$, such that up to a subsequence ($\alpha = 1,\ldots,n-1$)
\begin{align*}
\weps^n &\rats w_0^n,
\\
\frac{\weps^{\alpha}}{\vareps} &\rats \big(\widetilde{w}_1^{\alpha} - y_n \partial_{\alpha} w_0^n\big),
\\
\frac{1}{\vareps} \chi_{\oeps^{\ast}} D(\weps) &\rats   \left(D_{\x}(\widetilde{w}_1) - y_n \nabla_{\x}^2 w_0^n + D_y(w_2) \right).
\end{align*}
\end{lemma}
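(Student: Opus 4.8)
The plan is to follow the standard three-step pattern for two-scale convergence in thin perforated layers: establish the required a priori bounds from the hypothesis, extract two-scale limits of the relevant sequences, and then identify the structure of the limit forced by the scaling. First I would apply the extension operator $E_{\vareps}$ from Lemma \ref{lem:Extension_operator} to $\weps$, obtaining an extension (still denoted $\weps$) defined on the whole layer $\oeps$ with $\|D(\weps)\|_{L^2((0,T)\times\oeps)}\le C\vareps^{3/2}$, since the extension controls the symmetric gradient uniformly. Then the Korn inequality for thin perforated domains from Lemma \ref{lem:app_Korn_thin} (applied with $\ast$ being the whole layer, or rather using the analogous inequality on $\oeps$) gives
\begin{align*}
\sum_{i=1}^{n-1}\foe\|\weps^i\|_{L^2(\oeps)} + \|\weps^n\|_{L^2(\oeps)} + \|\nabla\weps\|_{L^2(\oeps)} \le \frac{C}{\vareps}\|D(\weps)\|_{L^2(\oeps)} \le C\sqrt{\vareps},
\end{align*}
hence after integration in time $\|\weps^n\|_{L^2((0,T)\times\oeps)}\le C\sqrt{\vareps}$, $\|\weps^\alpha\|_{L^2((0,T)\times\oeps)}\le C\vareps^{3/2}$ for $\alpha\le n-1$, and $\|\nabla\weps\|_{L^2((0,T)\times\oeps)}\le C\sqrt{\vareps}$.

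Next I would apply the compactness results of Lemma \ref{lem:app_ts_comp_general}. From $\|\weps^n\|_{L^2}\le C\sqrt{\vareps}$ and $\|\vareps\nabla\weps^n\|_{L^2}\le C\vareps^{3/2}$... actually more precisely, from $\frac{1}{\sqrt{\vareps}}\|\weps^n\|_{L^2}+\sqrt{\vareps}\|\nabla\weps^n\|_{L^2}\le C$ one gets $\weps^n\rats w_0^n$ with $w_0^n\in L^2((0,T)\times\Sigma,H^1_\#(Z))$ and $\vareps\nabla\weps^n\rats\nabla_y w_0^n$; but the bound $\|\nabla\weps^n\|_{L^2}\le C\sqrt{\vareps}$ is one order better, forcing $\nabla_y w_0^n=0$, so $w_0^n$ is independent of $y$. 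Similarly for $\alpha\le n-1$, the scaled sequences $\weps^\alpha/\vareps$ satisfy $\frac{1}{\sqrt{\vareps}}\|\weps^\alpha/\vareps\|_{L^2}+\sqrt{\vareps}\|\nabla(\weps^\alpha/\vareps)\|_{L^2}\le C$, yielding two-scale limits. For the symmetric gradient, $\foe\chi_{\oeps^\ast}D(\weps)$ is bounded in $L^2$ (since $\|D(\weps)\|_{L^2}\le C\vareps^{3/2}$), so it two-scale converges to some limit $G$; and writing $D(\weps)=\vareps D(\weps/\vareps)$ component-wise, the mixed components and the $nn$-component must be analyzed carefully. The key structural point is the Kirchhoff--Love ansatz: the bound $\|D(\weps)\|_{L^2}\le C\vareps^{3/2}$ (two orders smaller than the generic $C\sqrt{\vareps}$) forces $\partial_n w_1^\alpha + \partial_\alpha w_0^n = 0$ in the limit, so $w_1^\alpha = \widetilde{w}_1^\alpha(t,\x) - y_n\partial_\alpha w_0^n$ for some $\widetilde{w}_1^\alpha$ independent of $y_n$; this is precisely the argument in \cite{gahn2022derivation,griso2020homogenization}. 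Combining with periodicity and the lateral Dirichlet condition $\weps=0$ on $\partial_D\oeps^\ast$ gives $w_0^n\in H^2_0(\Sigma)$ (the second derivatives enter through $\nabla_{\x}^2 w_0^n$ appearing in the limit of $D(\weps)/\vareps$, and the boundary trace is zero for both $w_0^n$ and $\nabla_{\x}w_0^n$) and $\widetilde{w}_1\in H^1_0(\Sigma)^n$ with $\widetilde{w}_1^n=0$. Finally, decomposing the limit of $\foe\chi_{\oeps^\ast}D(\weps)$ into its macroscopic part $D_{\x}(\widetilde{w}_1)-y_n\nabla_{\x}^2 w_0^n$ plus a purely microscopic fluctuation yields $w_2\in L^2((0,T)\times\Sigma,H^1_\#(Z)/\R)^n$ with $D_y(w_2)$ the fluctuation.

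The main obstacle I expect is the careful bookkeeping needed to extract the Kirchhoff--Love structure and, in particular, to justify that $w_0^n\in H^2_0(\Sigma)$: one must show not merely that $w_0^n$ is $y$-independent with some $H^1$ regularity in $\x$, but that its \emph{second} $\x$-derivatives lie in $L^2$, which emerges only by examining the limit of $\foe D(\weps)$ in the $\alpha\beta$-components ($\alpha,\beta\le n-1$) and extracting $-y_n\partial_{\alpha\beta} w_0^n$ as the coefficient of the $y_n$-linear part. This requires testing against suitable oscillating test functions $\phi(t,\x,y)$ that are affine in $y_n$, integrating by parts in $y_n$ over $Z$ (using that $S^\pm\cap\partial Z_f=\emptyset$ for the fluid case, and handling the $S$-boundary terms in the solid case), and passing to the limit. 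Since the statement attributes the result essentially to known techniques, I would present the proof by reducing to Lemma \ref{lem:app_ts_comp_general} and citing \cite[Proposition 5.2]{gahn2022derivation} and \cite{griso2020homogenization} for the Kirchhoff--Love identification, filling in only the points where the present geometry (perforated thin layer with mixed lateral boundary conditions) differs — chiefly the boundary conditions on $\partial_D\oeps^\ast$ that produce the homogeneous Dirichlet data for $w_0^n$, $\nabla_{\x}w_0^n$, and $\widetilde{w}_1$.
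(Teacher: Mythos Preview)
Your proposal is correct in outline and actually goes well beyond what the paper does: the paper's own proof is a one-line citation, ``See \cite[Theorem 3]{GahnJaegerTwoScaleTools}.'' What you have written is essentially a sketch of the argument that lives in that reference --- extend via $E_\vareps$, apply the thin-layer Korn inequality to get the hierarchy of bounds, extract two-scale limits via Lemma~\ref{lem:app_ts_comp_general}, and then identify the Kirchhoff--Love structure from the vanishing of the mixed components of the symmetric gradient at the appropriate scale. Your remark at the end, that you would reduce to Lemma~\ref{lem:app_ts_comp_general} and cite \cite{gahn2022derivation,griso2020homogenization} for the identification, is exactly in the spirit of what the paper does, only with the more precise citation \cite{GahnJaegerTwoScaleTools}. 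So there is no gap; you have simply unpacked what the paper leaves packed.
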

\begin{proof}
See \cite[Theorem 3]{GahnJaegerTwoScaleTools}.
\end{proof}
We emphasize that from the bound $\eqref{ineq:bound_ass_TS_plate}$ we immediately obtain with the Korn inequality from Lemma \ref{lem:app_Korn_thin} a bound for $\weps$ and its derivatives.

\bibliographystyle{abbrv}
\bibliography{../../Referenzen/literature} 

\end{document}